\documentclass[reqno,11pt]{amsart}        
\usepackage{amsfonts,amsmath,amssymb,amsthm,colordvi,mathrsfs,graphicx}
\usepackage{caption,subcaption,float}
\usepackage[utf8]{inputenc}
\usepackage{mathrsfs}
\usepackage{geometry}
\usepackage{enumerate}

\usepackage{amsmath, amssymb, amsthm, geometry, hyperref}

\usepackage{tabularx}
\usepackage{tabulary}
 
\hypersetup{
  colorlinks=true,
  linkcolor=blue,
  citecolor=blue,
  urlcolor=blue
}
\usepackage[all,knot,poly]{xy}
\usepackage{tikz-cd}
\usepackage{tikz}
\usepackage{verbatim}
\usetikzlibrary{arrows}
\usetikzlibrary{knots}
\tikzset{every path/.style={line width=0.4pt},every node/.style={transform shape,knot crossing,inner sep=1.5pt},>=triangle 60,text node/.style={rectangle,transform shape=false,black}}
 \usetikzlibrary{decorations.markings, arrows.meta}

\theoremstyle{plain}

\newcommand{\R}{\mathbb R}
\newcommand{\C}{\mathbb C}
\newcommand{\PP}{\mathbb P}

\newcommand{\Z}{\mathbb Z}

\newcommand{\Log}{\mathrm{Log}\,}

\newcommand{\di}{\displaystyle}

\newcommand\restr[2]{{% we make the whole thing an ordinary symbol
  \left.\kern-\nulldelimiterspace % automatically resize the bar with \right
  #1 % the function
  \right|_{#2} % this is the delimiter
  }}

\newtheorem{principle}{Principle}

\newtheorem{example}{Example}

\newtheorem{remark}{Remark}[section]
\newtheorem*{mtheorem*}{Main Theorem}
\newtheorem{theorem}{Theorem}[section]
\newtheorem{definition}{Definition}[section]
\newtheorem{proposition}{Proposition}[section]
\newtheorem{corollary}{Corollary}[section]
\newtheorem{lemma}{Lemma}[section]
\begin{document}
\title{From Logarithmic Limit Sets to Algebraicity}%\title{TBA}%[A tropical characterization of complex analytic varieties]{A tropical characterization of complex analytic varieties to be algebraic}
\author{Mounir Nisse}
\date{}

\newcommand{\hooklongrightarrow}{\lhook\joinrel\longrightarrow}
\newcommand{\twoheadlongrightarrow}{\relbar\joinrel\twoheadrightarrow}

\vbadness=10000
\hbadness=10000
\tolerance=2000

%%%%%%%%%%%%%%%%%%%%%%%%%%%%%%%%%%%%%%%%%%%%%%%%%%%%%%%%%%%%%%%%%%%%%%%%%%%%%% 
 
\subjclass[2020]{14T20, 32B20, 14M25, 32A10, 14P25}
\keywords{Logarithmic limit set, analytic subvariety, finite logarithmic type, toric compactification, amoeba, GAGA, Chow theorem, Bergman theorem, meromorphic extension, tropical compactification}

\address{Mounir Nisse\\
Department of Mathematics, Xiamen University Malaysia, Jalan Sunsuria, Bandar Sunsuria, 43900, Sepang, Selangor, Malaysia.
}
\email{mounir.nisse@gmail.com, mounir.nisse@xmu.edu.my}
\thanks{}
\thanks{This research is supported in part by Xiamen University Malaysia Research Fund (Grant no. XMUMRF/ 2020-C5/IMAT/0013).}

\maketitle

%%%%%%%%%%%%%%%%%%%%%%%%%%%%%%%%%%%%%%%%%%

\begin{abstract}
We study a converse problem to the theorem of Bergman and Bieri--Groves on logarithmic limit sets of algebraic subvarieties of the complex algebraic torus $(\mathbb C^*)^n$. We introduce the notion of \emph{finite logarithmic type}, a boundary condition formulated on toric compactifications in terms of coherent meromorphic extensions with uniformly bounded pole orders along toric boundary divisors. We prove that a closed reduced analytic subvariety of $(\mathbb C^*)^n$ whose logarithmic limit set is a finite rational spherical polyhedral complex of the expected dimension is algebraic whenever it is of finite logarithmic type. The proof relies on toric compactifications, coherent extension theory, Serre's GAGA theorem, and Chow's theorem.
A principal result of the paper is the complete treatment of the one-dimensional case. We prove that every closed analytic curve in $(\mathbb C^*)^n$ with finite logarithmic limit set is algebraic. Consequently, the finiteness of the logarithmic limit set completely characterizes algebraicity for analytic curves, yielding a genuine converse to Bergman's theorem in dimension one. These results establish new links between logarithmic limit sets, tropical geometry, toric geometry, and complex analytic geometry.
\end{abstract}

%%%%%%%%%%%%%%%%%%%%%%%%%%%%%%%%%%%%%%%%%%%%%%%%%%%%%%%%%%%%%%%%%%%%%%%%%%%%

\section*{Introduction}

One of the classical themes in algebraic geometry is the problem of recognizing when an analytic subvariety of an algebraic ambient space is itself algebraic. The prototype of this philosophy is Chow's theorem \cite{Chow49}, which asserts that every closed analytic subvariety of complex projective space is algebraic. In the noncompact setting the situation is substantially more delicate. The algebraic torus $T=(\mathbb C^*)^n$ admits many closed analytic subvarieties that are not algebraic, and therefore some additional condition controlling the behavior at infinity is necessary.

The asymptotic geometry of analytic subvarieties of the torus is naturally encoded by amoebas and logarithmic limit sets. Since the pioneering work of Bergman \cite{Bergman71} and the subsequent development by Bieri and Groves \cite{BieriGroves84}, it has become clear that logarithmic limit sets provide a tropical shadow of algebraic varieties. If $V\subset (\mathbb C^*)^n$ is algebraic, then the logarithmic limit set $\mathscr L^\infty(V)$ is a finite rational spherical polyhedral complex of dimension $\dim_{\mathbb C}(V)-1$. This fundamental result establishes a remarkable bridge between algebraic geometry and polyhedral geometry and lies at the foundation of tropical geometry; see also \cite{EKL06,MaclaganSturmfels15}.

A natural converse question arises. Suppose that $V\subset (\mathbb C^*)^n$ is a closed analytic subvariety whose logarithmic limit set is finite and rational polyhedral. Must $V$ be algebraic? This question may be viewed as a converse problem to the Bergman--Bieri--Groves theorem \cite{Bergman71,BieriGroves84}. Although the rational polyhedrality of $\mathscr L^\infty(V)$ imposes strong restrictions on the asymptotic directions of the amoeba, it does not by itself control the analytic complexity of the variety near infinity. Essential singularities may still occur along boundary directions and prevent algebraicity.

The purpose of the present paper is to identify a precise boundary finiteness condition that eliminates this obstruction. The central notion introduced here is the concept of finite logarithmic type. Roughly speaking, finite logarithmic type means that after choosing a suitable toric compactification $X_\Sigma$ adapted to the logarithmic limit set, the analytic equations defining the variety extend meromorphically across the toric boundary with uniformly bounded pole orders. Equivalently, the ideal sheaf of the variety admits a coherent meromorphic extension whose poles are controlled by one fixed torus-invariant divisor. This perspective is closely related to the philosophy of toric compactifications and tropical compactifications developed by Tevelev \cite{Tevelev07}.

The main theorem of the paper shows that finite logarithmic type is sufficient for algebraicity. More precisely, if $V\subset (\mathbb C^*)^n$ is a closed reduced analytic subvariety with finitely many irreducible components, if $\mathscr L^\infty(V)$ is a finite rational spherical polyhedral complex of dimension $\dim_{\mathbb C}(V)-1$, and if $V$ is of finite logarithmic type with respect to an adapted complete toric compactification, then $V$ is algebraic. In fact, $V$ is cut out by finitely many Laurent polynomials.

\begin{theorem}[Finite Logarithmic Type Theorem]
Let $V\subset (\mathbb C^*)^n$ be a closed reduced analytic subvariety with finitely many irreducible components. Assume that $\mathscr L^\infty(V)$ is a finite rational spherical polyhedral complex of dimension $\dim_{\mathbb C}(V)-1$. Let $\Sigma$ be a complete rational fan refining the conical decomposition determined by $\mathscr L^\infty(V)$. If $V$ is of finite logarithmic type with respect to the toric compactification $X_\Sigma$, then $V$ is algebraic. More precisely, $V$ is defined by finitely many Laurent polynomials.
\end{theorem}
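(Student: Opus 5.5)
The plan is to compactify, extend, and then invoke GAGA and Chow. Since $\Sigma$ is a complete rational fan, $X_\Sigma$ is a complete toric variety. Passing to a toric resolution (a refinement of $\Sigma$ by smooth cones, which induces a proper birational toric morphism that is an isomorphism over the torus), we may assume $X_\Sigma$ is smooth and projective. Completeness alone does not give projectivity, so we also choose a projective refinement. Finite logarithmic type should survive this refinement, since pulling back meromorphic sections with poles bounded by a torus-invariant divisor $D$ gives poles bounded by the pullback of $D$.

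The first step is to produce the extension. By definition of finite logarithmic type, the ideal sheaf $\mathcal I_V \subset \mathcal O_T$ admits a coherent extension. Concretely, there is a fixed torus-invariant divisor $D=\sum_\rho m_\rho D_\rho$ such that local generators of $\mathcal I_V$ extend as sections of $\mathcal O_{X_\Sigma}(D)$ near every boundary point. This yields a coherent analytic subsheaf $\mathcal J\subset \mathcal O_{X_\Sigma}(D)^{an}$ with $\mathcal J|_T=\mathcal I_V\otimes\mathcal O(D)|_T$. Twisting by $\mathcal O(-D)$ gives a coherent analytic ideal sheaf $\widetilde{\mathcal I}\subset\mathcal O_{X_\Sigma}$ whose restriction to $T$ is $\mathcal I_V$. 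Its zero locus $\overline V:=Z(\widetilde{\mathcal I})$ is then a closed analytic subset of $X_\Sigma$ with $\overline V\cap T=V$.

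Here the hypothesis on $\mathscr L^\infty(V)$ enters. Because $\Sigma$ refines the conical decomposition of $\mathscr L^\infty(V)$, every boundary orbit $O(\sigma)$ met by the closure of $V$ corresponds to a cone $\sigma$ meeting the cone over $\mathscr L^\infty(V)$. Since $\dim \mathscr L^\infty(V)=\dim V-1$, the closure of $V$ meets each $O(\sigma)$ in dimension $\le \dim V-\dim\sigma$. It follows that the topological closure of $V$ in $X_\Sigma$ is analytic of pure dimension $\dim V$, which is a Remmert--Stein type statement. We can then replace $\overline V$ by the union of its components meeting $T$. This keeps $\overline V\cap T=V$ without adding boundary components.

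Next, $X_\Sigma$ is projective, so Chow's theorem, or equivalently GAGA applied to the coherent sheaf $\widetilde{\mathcal I}$, shows that $\overline V$ is algebraic: $\widetilde{\mathcal I}=\mathcal I^{an}$ for a unique coherent algebraic ideal $\mathcal I$. Restricting to the affine open $T=\operatorname{Spec}\mathbb C[x_1^{\pm1},\dots,x_n^{\pm1}]$, the ideal $\mathcal I|_T$ corresponds to an ideal of the Laurent polynomial ring. That ring is Noetherian, so the ideal is finitely generated by Laurent polynomials $f_1,\dots,f_k$. Since $\mathcal I^{an}|_T=\mathcal I_V$ and $V$ is reduced, $V=\{f_1=\cdots=f_k=0\}$, which proves the statement.

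The main obstacle is the extension step. We must check that the local meromorphic extensions with poles bounded by $D$ glue to a coherent sheaf on all of $X_\Sigma$, not just near generic points of the divisors $D_\rho$. We also need the zero locus to contain no spurious components inside the boundary, which is where the dimension of $\mathscr L^\infty(V)$ and the finiteness of irreducible components are used. To handle this, I would first try to define $\mathcal J$ as the sheaf of sections of $\mathcal O(D)$ whose restriction to $T$ lies in $\mathcal I_V$. I would then prove its coherence via Siu's extension theorem for coherent sheaves across analytic subsets. The uniform pole bound is exactly what makes this sheaf locally finitely generated along the boundary strata.
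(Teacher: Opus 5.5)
Your core argument is exactly the paper's proof: twist the coherent extension $\mathcal J\subset\mathcal O_{X_\Sigma}(E)$ by $\mathcal O_{X_\Sigma}(-E)$ to get a coherent ideal sheaf on $X_\Sigma$ restricting to $\mathcal I_V$ on $T$, algebraize it by GAGA on the complete toric variety, then restrict to the affine torus and use that the Laurent polynomial ring is Noetherian. Your additions are unnecessary: the global coherent $\mathcal J$ is already part of the definition of finite logarithmic type, so the gluing/Siu ``obstacle'' never arises; the paper uses GAGA in its proper form, so no projective refinement is needed; and your Remmert--Stein paragraph, whose dimension bound on $\overline V\cap O(\sigma)$ is not justified for analytic $V$, should be dropped, since boundary components of $Z(\widetilde{\mathcal I})$ disappear on restriction to $T$ and the hypothesis on $\mathscr L^\infty(V)$ is used only to fix $\Sigma$.
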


A major motivation for introducing finite logarithmic type comes from the one-dimensional case. Curves provide the simplest setting in which the interaction between logarithmic limit sets and algebraicity can be analyzed completely. One of the principal results of this paper establishes a new algebraicity theorem for analytic curves.

%%%%%%%%%%%%%%%%%%%%%%%%%%%%%%%%
\medskip

A primary feature of the paper is the complete treatment of the one-dimensional case. Let $C\subset(\mathbb C^\ast)^n$ be a closed analytic curve with finitely many irreducible components. In this case the logarithmic limit set has expected dimension $0$. We prove that if $\mathscr L^\infty(C)$ is finite, then $C$ is algebraic. In particular, if $\mathscr L^\infty(C)$ is a finite rational set, then $C$ is algebraic and hence is automatically of finite logarithmic type. This gives a genuine converse to Bergman's theorem in dimension one.  It shows that for analytic curves the finiteness and rationality of the logarithmic limit set completely characterize algebraicity. The result highlights the remarkable rigidity of one-dimensional analytic geometry inside the algebraic torus.

\begin{theorem}[One-Dimensional Algebraicity Theorem] 
Let $C\subset (\mathbb C^*)^n$ be a closed analytic curve with finitely many irreducible components. If the logarithmic limit set $\mathscr L^\infty(C)$ is finite, then $C$ is algebraic.
\end{theorem}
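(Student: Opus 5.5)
The plan is to compactify $C$ inside a toric variety adapted to the finitely many asymptotic directions, show that the closure of $C$ is still an analytic curve there, and then conclude with Chow's theorem. Since $C$ has finitely many irreducible components, and each component is closed in $(\mathbb C^*)^n$ and has a logarithmic limit set contained in $\mathscr L^\infty(C)$, I will assume from the start that $C$ is irreducible. Write $\mathscr L^\infty(C)=\{v_1,\dots,v_k\}\subset S^{n-1}$.

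\textbf{Step 1: rationality of the directions.} First I will show that each $v_j$ is a rational direction. Pick a sequence $p_m\in C$ with $\mathrm{Log}(p_m)/\|\mathrm{Log}(p_m)\|\to v_j$. For a character $\chi^u$ with $u\perp v_j$, the function $\log|\chi^u|$ is harmonic on the regular part of $C$. Only finitely many directions are available, and the set of accumulation directions of an end of $C$ is connected. Together these force every end of $C$ to escape along a single ray. Along that ray, all $\log|\chi^u|$ with $u\perp v_j$ are $o(\|\mathrm{Log}\|)$. An irrational ray would make the closure of the torus orbit it generates positive dimensional in the real torus; combining this with the growth of harmonic functions, I expect to force a contradiction.

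\textbf{Step 2: the compactification.} Choose a complete smooth projective fan $\Sigma$ having each $\mathbb R_{\ge0}v_j$ as a ray, and let $X_\Sigma$ be the resulting projective toric variety. Let $\overline C$ be the closure of $C$ in $X_\Sigma$. By Step 1, $\overline C\setminus C$ lies in the union of the divisors $D_{v_j}$. More precisely, it lies in the open orbits $O_{v_j}$ together with the lower-dimensional orbits, which contain the torus-fixed points.

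\textbf{Step 3: the boundary of $\overline C$ is finite (main obstacle).} The key step is to show that $\overline C\setminus C$ is a finite set. Once this is known, Shiffman's extension theorem, or already Remmert--Stein since the set is $0$-dimensional, shows that $\overline C$ is a closed analytic subset of $X_\Sigma$. Chow's theorem then makes $\overline C$ algebraic, and so $C=\overline C\cap(\mathbb C^*)^n$ is algebraic.

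To prove finiteness, I would work near a generic point of $D_{v_j}$ in local toric coordinates $(t,w_1,\dots,w_{n-1})$. Here $D_{v_j}=\{t=0\}$ and the $w_i$ are characters $\chi^{u_i}$ with $u_i\perp v_j$. Consider an end $E$ of $C$ escaping along $v_j$. The first aim is to show that $E$ is conformally a punctured disc. On $E$, the function $\log|t|$ is harmonic and tends to $-\infty$, while the functions $\log|w_i|$ are harmonic and sublinear in $\log|t|$. I would try to use this, together with finiteness of the limit set, to show that $E$ is parabolic.

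The second aim is to show that the $w_i$ extend across the puncture meromorphically, hence continuously to $\mathbb P^1$. The worry is an essential singularity at the puncture. By the big Picard theorem, $w_i$ would then take almost every value in $\mathbb C^*$ infinitely often near the puncture. Moreover, along suitable paths $\log|w_i|$ would grow comparably to $\log|t|$, as in the example $y=e^x$, whose logarithmic limit set fills an arc. That growth would produce a continuum of limit directions, contradicting finiteness.

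With both aims established, each of the finitely many ends contributes exactly one point of $\overline C\setminus C$. The delicate parts are making the parabolicity argument rigorous and controlling the comparison between $\log|w_i|$ and $\log|t|$ when an essential singularity is present. I expect this comparison to be the heart of the proof.
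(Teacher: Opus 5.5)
Your outline (adapted toric compactification, finitely many boundary points, Remmert--Stein, Chow) would finish the proof if Steps 1 and 3 were established, but neither is. Step 1 ends with ``I expect to force a contradiction''. Step 3 leaves both the parabolicity of the ends and the exclusion of essential singularities as hopes; the ``continuum of directions'' claim is extrapolated from $y=e^x$, not proved. The finiteness of the set of ends, used in your last sentence, is never addressed. Since Step 2 relies on Step 1's rationality just to build the fan, the whole chain rests on unproved claims.

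The missing idea, on which the paper's proof rests, is \emph{properness of a single generic character}. Because $\mathscr L^\infty(C)$ is finite, you can choose $\nu\in\mathbb Z^n$ with $\langle\nu,\xi\rangle\neq0$ for all $\xi\in\mathscr L^\infty(C)$. Then $\chi^\nu|_C\colon C\to\mathbb C^*$ is proper. Indeed, a sequence leaving every compact subset of $C$ while $\chi^\nu$ stays in a compact subset of $\mathbb C^*$ has $\|\mathrm{Log}\, z_m\|\to\infty$ but $\langle\nu,\mathrm{Log}\, z_m\rangle$ bounded, so any limit direction would be orthogonal to $\nu$. On the normalization this becomes a branched covering $\widetilde C\to\mathbb C^*$ of finite degree, so $\widetilde C$ is a compact Riemann surface minus finitely many points. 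This gives at once, with no potential theory, that there are finitely many ends and each is a punctured disc. Moreover, in a local coordinate $s$ at each puncture, $\chi^\nu=s^qu(s)$ with $q\neq0$ and $u(0)\neq0$.

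That normal form is what replaces your Picard heuristic. Write $L=(\log|Z_1|,\dots,\log|Z_n|)$ for the coordinates along a puncture. Then $\langle\nu,L(s)\rangle=-q\log(1/|s|)+O(1)$. The same orthogonality argument shows $\|L(s)\|\le A\log(1/|s|)$ near $s=0$: otherwise a sequence with $\|L(s_m)\|/\log(1/|s_m|)\to\infty$ would produce a limit direction orthogonal to $\nu$. Hence $|s|^{A}\le|Z_i(s)|\le|s|^{-A}$, and Riemann's removable singularity theorem makes every $Z_i$ meromorphic at the puncture. This is the content of the paper's local monomial extension lemma together with its scalar growth lemma.

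The same mechanism works in your chart with $t$ in place of $\chi^\nu$. Once an end is known to be a punctured disc on which $t=s^qu(s)$, your sublinearity $\log|w_i|=o(\log|t|)$ immediately gives $|s|^{\varepsilon}\le|w_i(s)|\le|s|^{-\varepsilon}$ for each $\varepsilon>0$ and small $|s|$. So $w_i$ extends holomorphically without zeros. The comparison you call the heart of the proof is trivial once properness supplies the punctured-disc structure.

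Finally, Step 1 is unnecessary. The paper extends the normalization to $\overline C\to(\mathbb P^1)^n$ and applies Chow there. Rationality of $\mathscr L^\infty(C)$ then comes out afterwards, since $\log|Z_i|=k_i\log|s|+O(1)$ with $k_i\in\mathbb Z$. (Also, with the convention $U_\sigma=\mathrm{Spec}\,\mathbb C[\sigma^\vee\cap M]$, points approaching $D_\rho$ have $\mathrm{Log}$ escaping in direction $-u_\rho$. So your fan would need the rays $-\mathbb R_{\ge0}v_j$.)
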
 

\medskip

The proof in dimension one is conceptually different from the higher-dimensional argument. Since $\mathscr L^\infty(C)$ is finite, one may choose a torus character $\chi^\nu$ such that $\langle\nu,\xi\rangle\neq0$ for every logarithmic limiting direction $\xi$. The restriction $f=\chi^\nu|_C:C\to\mathbb C^\ast$ is then proper. After normalization, this gives a finite holomorphic map from the normalized curve to $\mathbb C^\ast$, and hence the normalization compactifies by adding finitely many points over $0$ and $\infty$. Near an added point, the character has the form $s^q u(s)$ with $q\neq0$ and $u(0)\neq0$. The key analytic lemma shows that if any coordinate function had an essential singularity at such a puncture, then the normalized logarithmic vectors would have infinitely many limiting directions. This contradicts the finiteness of $\mathscr L^\infty(C)$. Therefore all coordinate functions extend meromorphically to the compactified normalization. The resulting map to $(\mathbb P^1)^n$ has compact analytic image, and Chow's theorem implies that the image is algebraic. Intersecting with the dense torus gives the algebraicity of $C$.

%%%%%%%%%%%%%%%%%%%%%%%%%%%%%%%%
\medskip

This theorem may be regarded as a genuine converse to the one-dimensional Bergman--Bieri--Groves theorem \cite{Bergman71,BieriGroves84}. It shows that for analytic curves the finiteness and rationality of the logarithmic limit set completely characterize algebraicity. The result highlights the remarkable rigidity of one-dimensional analytic geometry inside the algebraic torus.

\medskip

A second contribution of the paper is the development of several equivalent formulations of finite logarithmic type. Besides the original definition in terms of coherent extensions of ideal sheaves, we establish an intrinsic valuative characterization. In this form, finite logarithmic type is expressed by the existence of a coherent meromorphic extension $\mathcal F\subset \mathcal M_{X_\Sigma}$ satisfying uniform inequalities $\operatorname{ord}_{D_\rho}(f)\ge -M_\rho$ along every toric boundary divisor $D_\rho$. This formulation makes clear that finite logarithmic type is fundamentally a divisorial condition measuring the growth of analytic equations at infinity.

The paper also clarifies the relationship between finite logarithmic type and GAGA. In the classical projective setting, coherence together with properness leads to algebraicity through Serre's GAGA theorem \cite{SerreGAGA56}. In the toric setting, finite logarithmic type plays the role of a logarithmic properness condition. The bounded-pole condition allows coherent analytic data on the torus to extend to coherent analytic sheaves on a proper toric compactification, after which GAGA and Chow's theorem \cite{Chow49} become applicable. This interpretation places finite logarithmic type within a broader framework connecting algebraic geometry, analytic geometry, and compactification theory.

Another contribution is the study of analytic closures in toric compactifications. We show that if the closure of an analytic subvariety in a toric compactification is already analytic, then finite logarithmic type automatically holds. This provides a geometric source of examples and explains the naturality of the bounded-pole condition. From this perspective, finite logarithmic type measures the extent to which an analytic variety behaves as though it had an analytic closure along the toric boundary.

The results obtained here suggest a broader program. The central open problem is whether finite logarithmic type follows automatically from the rational polyhedrality of the logarithmic limit set. A positive answer would transform the main theorem into a complete converse to the Bergman--Bieri--Groves theorem and would provide a purely tropical criterion for algebraicity in the complex torus.

\medskip

The philosophy emerging from this work may be summarized as follows. The logarithmic limit set records the combinatorial skeleton of an analytic variety at infinity, toric compactifications convert this combinatorial information into boundary geometry \cite{Fulton93}, finite logarithmic type controls the analytic behavior along the boundary, and GAGA together with Chow's theorem transforms this information into algebraicity. The one-dimensional theorem shows that in the case of curves no additional hypothesis is needed. Consequently the results of this paper by the introduction of finite logarithmic type 
provide new evidence that tropical asymptotic geometry contains far more algebraic information than previously recognized and open several new directions for future investigation.

%%%%%%%%%%%%%%%%%%%%%%%%%%%%%%%%%%%%%%%%%%%%%%%%%%%%%%%%%%%%%%%%%%%%%%%%%%%%

%%%%%%%%%%%%%%%%%%%%%%%%%%%%%%%%%%%%%%%%%%%%%%%%%%%%%%%%%%%%%%%%%%%%%%%%%%%%%%

The principal results and contributions of this paper may be summarized as follows.
First, we introduce the notion of finite logarithmic type for analytic subvarieties of the algebraic torus and show that it provides the appropriate boundary finiteness condition for deriving algebraicity from tropical asymptotic data.
Second, we prove an algebraicity theorem stating that a closed analytic subvariety with finite rational logarithmic limit set and finite logarithmic type is algebraic.
Third, we establish a complete converse to the Bergman theorem in dimension one by proving that every closed analytic curve in $(\mathbb C^*)^n$ with finite logarithmic limit set is algebraic.
Fourth, we introduce a scalar logarithmic growth lemma and a local monomial meromorphic-extension theorem, both of which appear to be new and may be of independent interest in the study of analytic varieties in algebraic tori.
Finally, we develop several equivalent formulations of finite logarithmic type and clarify its relation with toric compactifications, coherent meromorphic extensions, and tropical compactification theory.

%%%%%%%%%%%%%%%%%%%%%%%%%%%%%%%%%%%%%%%%%%%%%%%%%%%%%%%%%%%%%%%%%%%%%%%%%%%%%%

\subsection*{Organization of the paper}

Section~1 recalls the necessary background on logarithmic limit sets, toric varieties, coherent analytic sheaves, and tropical compactifications. In Section~2 we study the one-dimensional case and prove that a closed analytic curve in $(\mathbb{C}^*)^n$ with finite logarithmic limit set is algebraic. Section~3 introduces the notion of finite logarithmic type and develops its fundamental properties together with several equivalent formulations. In Section~4 we prove the main algebraicity theorem, showing that finite logarithmic type together with a finite rational logarithmic limit set implies algebraicity. Section~5 discusses the relation with tropical compactifications and the work of Tevelev. The final section contains examples, and consequences.

%%%%%%%%%%%%%%%%%%%%%%%%%%%%%%%%%%%%%%%%%%%%%%%%%%%%%%%%%%%%%%%%%%%%%%%%%%%%%%

\section{Preliminaries}

Throughout this paper we work over the field $\mathbb C$. We denote by $T=(\mathbb C^*)^n$ the complex algebraic torus of dimension $n$. The character lattice of $T$ is $M=\mathrm{Hom}(T,\mathbb C^*)\simeq \mathbb Z^n$ and the lattice of one-parameter subgroups is $N=\mathrm{Hom}(M,\mathbb Z)$. We write $M_{\mathbb R}=M\otimes_{\mathbb Z}\mathbb R$ and $N_{\mathbb R}=N\otimes_{\mathbb Z}\mathbb R$. The natural perfect pairing is denoted by $\langle \cdot,\cdot\rangle:M\times N\to\mathbb Z$. For $m\in M$ we write $\chi^m$ for the corresponding character of the torus.

\medskip

For a point $z=(z_1,\ldots,z_n)\in T$, the logarithmic map is defined by $\Log(z)=(\log|z_1|,\ldots,\log|z_n|)\in\mathbb R^n$. If $V\subset T$ is an analytic subvariety, its amoeba is the subset $\mathcal A(V)=\Log(V)\subset \mathbb R^n$. The amoeba is closed whenever $V$ is closed analytic. The asymptotic geometry of $V$ is encoded by the behavior of $\mathcal A(V)$ at infinity.

\medskip

Let $X$ be a connected, locally compact, noncompact Hausdorff space. An end of $X$ in the sense of Freudenthal is an element of the inverse limit $\varprojlim_K \pi_0(X\setminus K)$, where $K$ ranges over compact subsets of $X$. Equivalently, an end is represented by a nested sequence of connected components escaping every compact subset. The set of ends measures the topology of $X$ at infinity.
The logarithmic limit set of a subset $V\subset T$ is defined by
$$
\mathscr L^\infty(V)=\left\{\lim_{j\to\infty}\frac{x_j}{\|x_j\|}\; ;\; x_j\in \Log(V),\ \|x_j\|\to\infty\right\}\subset S^{n-1}.
$$
This set records the asymptotic directions of the amoeba. If $V$ is algebraic, the theorem of Bergman, Bieri and Groves asserts that $\mathscr L^\infty(V)$ is a finite rational spherical polyhedral complex of dimension $\dim_{\mathbb C}(V)-1$. This result plays a central role in tropical geometry and provides the link between algebraic varieties and polyhedral structures.
A rational polyhedral cone in $N_{\mathbb R}$ is a subset of the form $\sigma=\mathbb R_{\ge 0}u_1+\cdots+\mathbb R_{\ge 0}u_r$, where $u_1,\ldots,u_r\in N$. A face of $\sigma$ is the intersection of $\sigma$ with a supporting hyperplane. A cone is strongly convex if $\sigma\cap(-\sigma)=\{0\}$. A fan $\Sigma$ in $N_{\mathbb R}$ is a finite collection of strongly convex rational polyhedral cones satisfying the conditions that every face of a cone of $\Sigma$ belongs to $\Sigma$ and the intersection of any two cones of $\Sigma$ is a common face.
To a fan $\Sigma$ one associates a toric variety $X_\Sigma$. If $\sigma\in\Sigma$, the corresponding affine toric variety is
$$
U_\sigma=\mathrm{Spec}\,\mathbb C[\sigma^\vee\cap M],
$$
where $\sigma^\vee=\{m\in M_{\mathbb R}\,;\,\langle m,u\rangle\ge 0 \ \text{for all}\ u\in\sigma\}$ is the dual cone. The affine varieties $U_\sigma$ glue along common open subsets and produce the toric variety $X_\Sigma$. The torus $T$ is a dense open subset of $X_\Sigma$. The toric variety is complete if and only if the support $|\Sigma|$ equals $N_{\mathbb R}$.

\medskip

Every cone $\sigma\in\Sigma$ corresponds to a torus orbit $O(\sigma)\subset X_\Sigma$. The orbit-cone correspondence gives a bijection between cones of $\Sigma$ and torus orbits. The closure of the orbit $O(\sigma)$ is itself a toric variety associated with the star of $\sigma$. If $\dim\sigma=k$, then $\dim O(\sigma)=n-k$. In particular, the rays $\rho\in\Sigma(1)$ correspond to irreducible torus-invariant divisors.
For a ray $\rho\in\Sigma(1)$, let $u_\rho$ denote its primitive lattice generator and let $D_\rho$ be the corresponding invariant prime divisor. Every torus-invariant Weil divisor has the form $D=\sum_{\rho\in\Sigma(1)}a_\rho D_\rho$. If $m\in M$, the divisor of the character $\chi^m$ is
$$
\mathrm{div}(\chi^m)=\sum_{\rho\in\Sigma(1)}\langle m,u_\rho\rangle D_\rho.
$$
A torus-invariant Weil divisor $D=\sum a_\rho D_\rho$ is Cartier if and only if for every cone $\sigma\in\Sigma$ there exists an element $m_\sigma\in M$ satisfying $\langle m_\sigma,u_\rho\rangle=-a_\rho$ for every ray $\rho$ contained in $\sigma$.

\medskip

Support functions provide a convenient description of Cartier divisors. A support function on $\Sigma$ is a continuous function $\psi:|\Sigma|\to\mathbb R$ such that the restriction of $\psi$ to each cone $\sigma$ is integral linear. If $\psi|_\sigma=\langle m_\sigma,\cdot\rangle$, then the associated Cartier divisor is $D_\psi=-\sum_\rho \psi(u_\rho)D_\rho$. The correspondence between integral support functions and invariant Cartier divisors is bijective.
Let $\mathcal O_{X_\Sigma}(D)$ be the line bundle associated with a Cartier divisor $D$. The global sections of $\mathcal O_{X_\Sigma}(D)$ admit a combinatorial description in terms of lattice points of the polyhedron associated with $D$. This relationship between divisors and polyhedra is one of the basic features of toric geometry.

\medskip

We now recall some analytic notions. If $X$ is a complex analytic space, a sheaf $\mathcal F$ of $\mathcal O_X$-modules is coherent if every point of $X$ has a neighborhood on which $\mathcal F$ admits a finite presentation. By Oka's coherence theorem, the sheaf of holomorphic functions is coherent. By Cartan's theorem, the ideal sheaf of a closed analytic subspace is coherent.
The sheaf of meromorphic functions on $X$ is denoted by $\mathcal M_X$. A coherent meromorphic sheaf is a coherent $\mathcal O_X$-submodule of $\mathcal M_X^r$ for some integer $r$. If $\mathcal F\subset \mathcal M_X$ is coherent, its local sections are meromorphic functions whose poles are controlled by finitely many divisorial conditions.

\medskip

Let $X_\Sigma$ be a smooth complete toric variety. For an effective invariant divisor $E=\sum_{\rho\in\Sigma(1)}M_\rho D_\rho$, the sheaf $\mathcal O_{X_\Sigma}(E)$ can be viewed as a subsheaf of $\mathcal M_{X_\Sigma}$ and is characterized by the condition
$$
\mathcal O_{X_\Sigma}(E)(U)=\{f\in \mathcal M_{X_\Sigma}(U)\,;\,\mathrm{ord}_{D_\rho}(f)\ge -M_\rho \ \text{for all}\ \rho\}.
$$
Thus sections of $\mathcal O_{X_\Sigma}(E)$ are meromorphic functions whose poles along $D_\rho$ are bounded by the prescribed integers $M_\rho$.
If $V\subset T$ is a closed reduced analytic subvariety, we denote by $\mathcal I_V\subset\mathcal O_T$ its ideal sheaf. A coherent meromorphic extension of $\mathcal I_V$ to $X_\Sigma$ is a coherent analytic subsheaf $\mathcal F\subset\mathcal M_{X_\Sigma}$ such that $\mathcal F|_T=\mathcal I_V$. If there exists an effective divisor $E$ with $\mathcal F\subset\mathcal O_{X_\Sigma}(E)$, then the poles of all local sections of $\mathcal F$ are uniformly bounded by $E$.

\medskip

Several fundamental analytic theorems will be used. The Remmert proper mapping theorem states that the image of an analytic set under a proper holomorphic map is analytic. Bishop's compactness theorem \cite{Bishop64} asserts that a family of analytic cycles with uniformly bounded volume in a compact region admits convergent subsequences. These results provide compactness and extension properties for analytic varieties.
A holomorphic map $f:X\to Y$ between analytic spaces is finite if it is proper and has finite fibers. By the Weierstrass preparation theorem, finite holomorphic maps are locally described by finite modules over rings of holomorphic functions. This property is repeatedly used when studying projections of analytic varieties.

\medskip

We now recall the algebraic-analytic comparison theorems. If $X$ is a proper algebraic variety over $\mathbb C$, Serre's GAGA theorem asserts that analytification induces an equivalence between coherent algebraic sheaves on $X$ and coherent analytic sheaves on $X^{an}$. Moreover, the cohomology groups of coherent algebraic sheaves agree with those of their analytifications. As a consequence, coherent analytic ideal sheaves on proper algebraic varieties are algebraizable.

\medskip

Chow's theorem states that every closed analytic subvariety of complex projective space is algebraic. More generally, if $X$ is projective and $Y\subset X^{an}$ is a closed analytic subset, then $Y$ is algebraic. This theorem forms the bridge between complex analytic geometry and algebraic geometry.

\medskip

The tropicalization of an algebraic subvariety of the torus is the support of a weighted rational polyhedral complex in $N_{\mathbb R}$. The Bergman--Bieri--Groves theorem identifies this tropicalization with the cone over the logarithmic limit set. Consequently, the asymptotic behavior of algebraic varieties is governed by polyhedral geometry.
A tropical compactification of a subvariety $V\subset T$ is a compactification obtained from a toric variety whose fan is sufficiently compatible with the tropicalization of $V$. Tevelev's theorem asserts that if $\Sigma$ is a fan supported on the tropicalization of $V$, then the closure $\overline V$ in $X_\Sigma$ intersects precisely those torus orbits corresponding to cones that meet the tropicalization. This theorem provides a geometric interpretation of tropicalization in terms of toric compactifications.

\medskip

Throughout the paper, complete rational fans refining the conical decomposition determined by $\mathscr L^\infty(V)$ will be used. The associated toric compactifications allow one to translate asymptotic information encoded by logarithmic limit sets into geometric information along toric boundary divisors. Combined with coherent meromorphic extensions, GAGA, Chow's theorem, Bishop compactness, Remmert's theorem, and the theory of tropical compactifications, these tools constitute the foundational framework for the proofs of the main results.

\section{The one-dimensional case}

The case of analytic curves already exhibits the essential geometric mechanism behind the general algebraicity criterion. The central observation is that the logarithmic limit set records the asymptotic directions of the amoeba and therefore controls the behavior of the ends of the curve in the algebraic torus. From the tropical point of view, the logarithmic limit set is the spherical quotient of the Bergman fan introduced by Bergman \cite{Bergman71} and further developed by Bieri and Groves \cite{BieriGroves84}. For algebraic varieties, this object is a finite rational polyhedral complex whose dimension is one less than the dimension of the variety. In the case of a curve, the logarithmic limit set consists of finitely many rational points of the sphere and may be viewed as the collection of tropical directions of the ends of the curve.

The philosophy underlying tropical compactifications, developed systematically by Tevelev \cite{Tevelev07}, is that a finite rational tropicalization should determine a compactification with controlled boundary behavior. For algebraic subvarieties of a torus this principle is well understood: the Bergman fan governs toric compactifications adapted to the asymptotic geometry of the variety. The theorem proved below shows that, in dimension one, the converse phenomenon also holds. Namely, if an analytic curve has a logarithmic limit set possessing the same finiteness and rationality properties as an algebraic curve, then the curve must already be algebraic.

The proof combines tropical asymptotic information with classical complex-analytic methods. The finiteness of the logarithmic limit set allows one to choose a torus character whose logarithmic growth is nondegenerate along every end of the curve. This produces a proper holomorphic map to $\C^*$. After normalization, the curve compactifies to a compact Riemann surface, and the character extends to a meromorphic function on the compactification. The asymptotic information encoded in the logarithmic limit set then implies that every torus coordinate extends meromorphically across the boundary points. Consequently the normalization map extends to a holomorphic map into $(\PP^1)^n$, and Chow's theorem yields algebraicity.

\medskip
 
In conclusion, the argument combines logarithmic limit sets in the sense of Bergman \cite{Bergman71} and Bieri--Groves \cite{BieriGroves84}, the normalization theory of analytic curves \cite{Forster81}, the classical removable singularity theorem \cite{Remmert98}, and Chow's theorem \cite{Chow49}. The key point is that the finiteness of the logarithmic limit set imposes strong restrictions on the asymptotic behavior of the coordinate functions near the ends of the curve.

For a subset $E\subset(\C^*)^n$, its logarithmic limit set $\mathscr L^\infty(E)$ is the set of all $\xi\in S^{n-1}$ for which there exists a sequence $z_m\in E$ such that $\|\Log z_m\|\to+\infty$ and $\Log z_m/\|\Log z_m\|\to\xi$, where $\Log z=(\log|z_1|,\ldots,\log|z_n|)$. In the algebraic case, the logarithmic limit set is a finite rational spherical polyhedral complex by the theorem of Bergman \cite{Bergman71}; see also Bieri--Groves \cite{BieriGroves84} and the exposition in \cite{MaclaganSturmfels15}.

\begin{definition}[Cluster set at the puncture]\label{def:cluster-set}
Let $h:\Delta^*\to\C^*$ be holomorphic and let $T(s)=\log(1/|s|)$. Put $w(s)=\log|h(s)|/T(s)$. The cluster set in $\R$ of $\log|h(s)|/T(s)$ as $s\to0$ is the set
$
\operatorname{Cl}_0(w)=\{a\in\R:\text{ there exists a sequence }s_m\in\Delta^*\text{ such that }s_m\to0\text{ and }w(s_m)\to a\}.
$
Equivalently, $a\in\operatorname{Cl}_0(w)$ if and only if for every $\varepsilon>0$ and every $\rho>0$ there exists $s\in\Delta^*$ such that $0<|s|<\rho$ and $|w(s)-a|<\varepsilon$.
\end{definition}

\begin{lemma}[Scalar logarithmic growth lemma]\label{scalar-lemma}
Let $h:\Delta^*\to\C^*$ be holomorphic, and set $T(s)=\log(1/|s|)$. Assume that the cluster set in $\R$ of $\log |h(s)|/T(s)$ as $s\to0$ is finite and nonempty. Then $h$ is meromorphic at $0$.
\end{lemma}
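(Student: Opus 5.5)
The plan is to factor off the winding of $h$ around the puncture and reduce to a growth estimate for the real part of a holomorphic function on $\Delta^*$. Let $k\in\Z$ be the winding number of $h$ around $0$, i.e. $k=\frac{1}{2\pi i}\oint_{|s|=r} h'/h\,ds$. This is independent of $r$ because $h'/h$ is holomorphic on $\Delta^*$. Then $s^{-k}h$ is nonvanishing with zero winding number, so it has a holomorphic logarithm on $\Delta^*$. We write
$$
h(s)=s^k e^{g(s)}
$$
with $g$ holomorphic on $\Delta^*$. It follows that
$$
w(s)=\frac{\log|h(s)|}{T(s)}=-k+\frac{\operatorname{Re} g(s)}{T(s)}.
$$
It therefore suffices to show two things: that $\operatorname{Re} g(s)=O(\log(1/|s|))$ as $s\to0$, and that this bound forces $g$ to have a removable singularity. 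Then $h=s^k e^{g}$ is meromorphic at $0$, with order $k$, and the cluster set is $\{-k\}$.

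\textbf{Step 1 (connectedness of the cluster set).} This is the step I expect to be the main obstacle, because the hypothesis only speaks about cluster values in $\R$ and a priori $w$ could oscillate to $\pm\infty$. The idea is to pass to the extended line $[-\infty,+\infty]$. For each $\rho$, the punctured disk $\{0<|s|<\rho\}$ is connected and $w$ is continuous on it. Hence $K_\rho=\overline{w(\{0<|s|<\rho\})}\subset[-\infty,+\infty]$ is compact and connected. The extended cluster set $\bigcap_\rho K_\rho$ is a decreasing intersection of compact connected sets, so it is nonempty, compact and connected; in particular it is a closed interval of $[-\infty,+\infty]$. Its intersection with $\R$ is $\operatorname{Cl}_0(w)$, which by hypothesis is finite and nonempty, say containing $a\in\R$. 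A connected subset of $[-\infty,+\infty]$ containing $a$ whose trace on $\R$ is finite must be $\{a\}$: otherwise it would contain a nondegenerate interval around, or adjacent to, $a$. Therefore $w(s)\to a$ as $s\to 0$. Consequently $|\operatorname{Re} g(s)|\le C\log(1/|s|)$ for $|s|$ small.

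\textbf{Step 2 (removability via Fourier coefficients).} Expand $g(s)=\sum_{j\in\Z}c_j s^j$ in a Laurent series. For $s=re^{i\theta}$, the $e^{-in\theta}$ Fourier coefficient, $n\ge1$, is
$$
\frac{1}{2\pi}\int_0^{2\pi}\operatorname{Re} g(re^{i\theta})e^{in\theta}\,d\theta=\tfrac12\bigl(c_{n}r^{n}+\overline{c_{-n}}\,r^{-n}\bigr).
$$
Bounding the left side by $C\log(1/r)$ and multiplying by $r^n$ gives $|c_{-n}|\le 2Cr^{n}\log(1/r)+|c_n|r^{2n}\to0$ as $r\to0$. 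Hence $c_{-n}=0$ for all $n\ge1$, so $g$ extends holomorphically to $\Delta$. Then $h=s^ke^{g}$ is meromorphic at $0$ (with a zero or pole of order $|k|$, and $e^{g(0)}\ne0$), which also confirms that $a=-k$.
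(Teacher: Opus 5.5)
Your proof is correct, and it departs from the paper's proof mainly in the second half.

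Your Step 1 uses the same mechanism as the paper. The paper argues directly with the intermediate value theorem on the connected punctured discs: if $w$ were unbounded above (or below), every $b>a$ (or $b<a$) would be a cluster value. You package this as connectedness of the extended cluster set $\bigcap_\rho K_\rho\subset[-\infty,+\infty]$. That gives slightly more, namely that $w(s)$ actually converges to $a$. The final implication relies on compactness of $[-\infty,+\infty]$, which you should state explicitly.

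The paper then finishes in one line. Boundedness of $w$ gives $|h(s)|\le|s|^{-A}$, so $s^Nh$ is bounded for $N>A$, and Riemann's removable singularity theorem applies. Only the upper bound on $w$ is needed, and no factorization of $h$ is used.

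You instead factor out the winding number, $h=s^ke^{g}$, and kill the principal part of $g$ by estimating the Fourier coefficients of $\operatorname{Re}g$. This is the classical growth lemma for harmonic functions at an isolated singularity. It is longer than necessary: once you have $|\operatorname{Re}g|\le C\log(1/|s|)$, you get $|h|\le|s|^{k-C}$ and Riemann's theorem finishes directly. In exchange, it gives more precise information: the order of $h$ at $0$ equals the winding number $k$, and the cluster set is exactly $\{-k\}$, an integer.

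One harmless slip: $\frac{1}{2\pi}\int_0^{2\pi}\operatorname{Re}g(re^{i\theta})e^{in\theta}\,d\theta$ equals $\frac12\bigl(c_{-n}r^{-n}+\overline{c_n}\,r^{n}\bigr)$, the complex conjugate of what you wrote. You only use its modulus, so the estimate $|c_{-n}|\le 2Cr^n\log(1/r)+|c_n|r^{2n}$ is unaffected.
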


\begin{proof}
The proof is based on the classical removable singularity theorem \cite[Chapter~7]{Remmert98}. Put $w(s)=\log |h(s)|/T(s)$. Since $h$ is holomorphic and nowhere zero on $\Delta^*$, the function $\log |h(s)|$ is harmonic on $\Delta^*$, and $w$ is continuous on every punctured disc $0<|s|<\rho$. We first prove that $w$ is bounded near $0$. Suppose, by contradiction, that $w$ is not bounded above near $0$. Since the cluster set of $w$ in $\R$ is finite and nonempty, choose a real cluster value $a$. Let $b>a$ be arbitrary. Since $a$ is a cluster value, for every $\rho>0$ and every $\varepsilon>0$ there exists $s_1\in\Delta^*$ such that $0<|s_1|<\rho$ and $|w(s_1)-a|<\varepsilon$. Choose $\varepsilon>0$ with $a+\varepsilon<b$. Then $w(s_1)<b$. Since $w$ is not bounded above near $0$, for the same $\rho$ there exists $s_2\in\Delta^*$ such that $0<|s_2|<\rho$ and $w(s_2)>b$. The punctured disc $\{s:0<|s|<\rho\}$ is path connected. Hence there exists a continuous path $\gamma:[0,1]\to\Delta^*$ such that $\gamma(0)=s_1$, $\gamma(1)=s_2$, and $0<|\gamma(t)|<\rho$ for all $t\in[0,1]$. Since $w\circ\gamma$ is continuous and 
$$(w\circ\gamma)(0)<b<(w\circ\gamma)(1),$$ 
the intermediate value theorem gives $t_0\in(0,1)$ such that $(w\circ\gamma)(t_0)=b$. Thus, with $s_\rho=\gamma(t_0)$, one has $0<|s_\rho|<\rho$ and $w(s_\rho)=b$. Taking $\rho_m=1/m$ and repeating the preceding argument for all large $m$, we obtain a sequence $s_m\to0$ such that $w(s_m)=b$ for every $m$. Therefore $b$ is a cluster value of $w$ at $0$. Since the same conclusion holds for every real number $b>a$, the cluster set of $w$ contains the interval $(a,+\infty)$ and is therefore infinite. This contradicts the hypothesis. Hence $w$ is bounded above near $0$.

The same argument applied to $-w$ shows that $w$ is bounded below near $0$. Indeed, the cluster set of $-w$ is the negative of the finite nonempty cluster set of $w$. If $-w$ were not bounded above near $0$, then the preceding argument would imply that the cluster set of $-w$ is infinite, equivalently that the cluster set of $w$ is infinite. This is impossible. Consequently there exist constants $A>0$ and $\rho>0$ such that $|w(s)|\le A$ for $0<|s|<\rho$.

Since $T(s)>0$, the inequality $|w(s)|\le A$ is equivalent to 
$$
-A\le \log |h(s)|/T(s)\le A.
$$ 
Multiplying by $T(s)$ gives 
$$
-AT(s)\le \log |h(s)|\le AT(s).
$$
 Since $T(s)=\log(1/|s|)$, the upper inequality gives 
 $$
 \log |h(s)|\le A\log(1/|s|)=\log(|s|^{-A}),
 $$
  hence $|h(s)|\le |s|^{-A}$. The lower inequality gives $-\log |h(s)|\le AT(s)$, hence 
  $$
  \log |1/h(s)|\le A\log(1/|s|)=\log(|s|^{-A}),
  $$
   and therefore $|1/h(s)|\le |s|^{-A}$. Thus both $h$ and $1/h$ have at most polynomial growth near $0$.

Choose an integer $N>A$. Then $|s^Nh(s)|\le |s|^{N-A}$ for $0<|s|<\rho$. In particular $s^Nh(s)$ is bounded near $0$. Since $s^Nh(s)$ is holomorphic on $\Delta^*$, the removable singularity theorem implies that $s^Nh(s)$ extends holomorphically across $0$. Let $F$ denote this holomorphic extension. Then on $\Delta^*$ one has $h(s)=s^{-N}F(s)$. By the removable singularity theorem \cite[Chapter~7]{Remmert98}, $s^Nh(s)$ extends holomorphically across $0$. Hence $h$ has at worst a pole at $0$. Therefore $h$ is meromorphic at $0$. This also excludes essential singularities, because an isolated singularity of a holomorphic function is removable, a pole, or essential, and the preceding argument proves that the singularity of $h$ is at worst a pole.
\end{proof}

%%%%%%%%%%%%%%%%%%%%%%%%%%%%%%%%%
%%%%%%%%%%%%%%%%%%%%%%%%%%%%%%%%%

\begin{remark}
The hypothesis of Lemma \textnormal{\ref{scalar-lemma}} is compatible with all standard meromorphic examples and excludes the classical essential singularities. For instance, if $h(s)=s^k u(s)$ with $k\in\mathbb Z$ and $u(0)\neq0$, then
$
\log|h(s)|=-k\log(1/|s|)+O(1),
$
hence
$
\frac{\log|h(s)|}{\log(1/|s|)}\to -k,
$
and the cluster set is the singleton $\{-k\}$. On the other hand, for the essential singularities $e^{1/s}$ and $e^{e^{1/s}}$, the quotient
$
\log|h(s)|/\log(1/|s|)
$
has infinitely many real cluster values. Thus these classical examples do not satisfy the hypothesis of Lemma \textnormal{\ref{scalar-lemma}}.
\end{remark}

The proof of Lemma \textnormal{\ref{scalar-lemma}} shows that finiteness and nonemptiness of the real cluster set force the quotient
$
\log|h(s)|/\log(1/|s|)
$
to be bounded near the puncture. Consequently
$
|h(s)|\le |s|^{-A}
$
for some constant $A>0$, so $s^N h(s)$ is bounded for every integer $N>A$. The removable singularity theorem then implies that $s^N h(s)$ extends holomorphically across $0$, and therefore $h$ has at worst a pole at the puncture. Hence the singularity cannot be essential. Thus the lemma may be viewed as a logarithmic-growth criterion excluding essential singularities.

%%%%%%%%%%%%%%%%%%%%%%%%%%%%%%%%%
%%%%%%%%%%%%%%%%%%%%%%%%%%%%%%%%%

\begin{lemma}[Local monomial meromorphic-extension lemma]\label{local-extension-lemma}
Let $Z=(Z_1,\ldots,Z_n):\Delta^*\to(\C^*)^n$ be holomorphic. Assume that there exist $\nu=(\nu_1,\ldots,\nu_n)\in\Z^n$, an integer $q\ne0$, and a holomorphic function $u$ on $\Delta$ with $u(0)\ne0$ such that $Z^\nu=s^q u(s)$ on $\Delta^*$, where $Z^\nu=Z_1^{\nu_1}\cdots Z_n^{\nu_n}$. Assume also that the local logarithmic limit set of $Z$ at $0$ is finite and contains no vector orthogonal to $\nu$. Then every coordinate function $Z_i$ is meromorphic at $0$.
\end{lemma}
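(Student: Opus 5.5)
We reduce to the Scalar logarithmic growth lemma (Lemma~\ref{scalar-lemma}), applied to each coordinate $h=Z_i$ separately. Here the \emph{local logarithmic limit set} of $Z$ at $0$ means the set of limits of $L(s_m)/\|L(s_m)\|$ along sequences $s_m\to 0$ with $\|L(s_m)\|\to\infty$, where $L(s)=\Log Z(s)=(\log|Z_1(s)|,\ldots,\log|Z_n(s)|)$. Put $T(s)=\log(1/|s|)$ and $w_i(s)=\log|Z_i(s)|/T(s)$. It suffices to show that the real cluster set of each $w_i$ at $0$ is finite and nonempty. Lemma~\ref{scalar-lemma} then makes each $Z_i$ meromorphic at $0$.

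\textbf{Step 1: the $\nu$-component of $L$ grows linearly.} From $Z^\nu=s^qu(s)$ with $u(0)\neq0$ we get
\[
\langle \nu, L(s)\rangle=\log|s^qu(s)|=-q\,T(s)+O(1).
\]
Since $q\neq0$, this forces $\|L(s)\|\ge |q|T(s)/\|\nu\|-O(1)\to\infty$. So every sequence $s_m\to0$ has $\|L(s_m)\|\to\infty$. By compactness of $S^{n-1}$, it has a subsequence along which $L/\|L\|$ converges to a point $\xi$ of the local logarithmic limit set. In particular that set is nonempty.

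\textbf{Step 2: $\|L\|/T$ is bounded, and the cluster values of $L/T$ are determined by those of $L/\|L\|$.} This is the key step. Suppose some sequence $s_m\to0$ has $\|L(s_m)\|/T(s_m)\to\infty$. Pass to a subsequence with $L/\|L\|\to\xi$. Dividing the identity of Step~1 by $\|L\|$ gives
\[
\langle\nu,L\rangle/\|L\| = -qT/\|L\| + o(1)\to 0,
\]
so $\langle\nu,\xi\rangle=0$. This contradicts the hypothesis that no limiting direction is orthogonal to $\nu$. Hence $\|L(s)\|\le A\,T(s)$ near $0$ for some constant $A$.

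Now let $s_m\to0$ be any sequence along which $L/\|L\|\to\xi$ and $T/\|L\|\to t$; such subsequences always exist because $T/\|L\|$ is bounded by Step~1. The Step~1 identity gives $\langle\nu,\xi\rangle=-qt$. Since $\langle\nu,\xi\rangle\neq0$, we have $t\neq0$ and $t=-\langle\nu,\xi\rangle/q$. Consequently
\[
L(s_m)/T(s_m)\to \xi/t=-q\,\xi/\langle\nu,\xi\rangle.
\]
So every cluster value of $L/T$ is of the form $-q\xi/\langle\nu,\xi\rangle$ for some $\xi$ in the finite local logarithmic limit set. The vector-valued cluster set of $L/T$ is therefore finite. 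It is also nonempty, since $L/T$ is bounded by $A$.

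\textbf{Step 3: conclusion.} The cluster set of $w_i=L_i/T$ is the image of the cluster set of $L/T$ under projection to the $i$-th coordinate. Indeed, by boundedness, any sequence realizing a value of $w_i$ has a subsequence along which the whole vector $L/T$ converges. Hence the cluster set of $w_i$ is finite and nonempty, and Lemma~\ref{scalar-lemma} applies to $h=Z_i$, which maps $\Delta^*$ into $\C^*$.

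The main obstacle is Step~2: excluding super-logarithmic growth $\|L\|\gg T$ along some sequence. This is exactly where the non-orthogonality hypothesis is used, since such growth would produce a limiting direction orthogonal to $\nu$. Once that is excluded, the remainder is bookkeeping with subsequences.
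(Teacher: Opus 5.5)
Your proposal is correct and follows the paper's proof essentially step for step. Both arguments use the identity $\langle\nu,L(s)\rangle=-qT(s)+O(1)$, then show by contradiction that $\|L\|/T$ is bounded (a super-logarithmic sequence would yield a limiting direction orthogonal to $\nu$), then identify each cluster value of $L/T$ as $-q\xi/\langle\nu,\xi\rangle$ for some $\xi$ in the finite local limit set, and finally project to coordinates to invoke Lemma~\ref{scalar-lemma}. Your parametrization through $t=\lim T/\|L\|$ (in place of the paper's $\langle\nu,v\rangle=-q$) and your explicit subsequence justification of the projection step are cosmetic refinements only.
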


\begin{proof}
Let $T(s)=\log(1/|s|)$ and put $L(s)=(\log|Z_1(s)|,\ldots,\log|Z_n(s)|)\in\R^n$. From $Z^\nu=s^q u(s)$ we obtain 
$$
\langle\nu,L(s)\rangle=q\log|s|+\log|u(s)|=-qT(s)+O(1)
$$ 
as $s\to0$,  because $u(0)\ne0$.

We first prove that $\|L(s)\|/T(s)$ is bounded near $0$. Suppose the contrary. Then there exists a sequence $s_m\to0$ such that $\|L(s_m)\|/T(s_m)\to+\infty$. Passing to a subsequence, the unit vectors $L(s_m)/\|L(s_m)\|$ converge to some $\xi\in S^{n-1}$. Since $\|L(s_m)\|\to+\infty$, the vector $\xi$ belongs to the local logarithmic limit set of $Z$ at $0$. Dividing $\langle\nu,L(s_m)\rangle=-qT(s_m)+O(1)$ by $\|L(s_m)\|$ gives 
$$
\langle\nu,L(s_m)/\|L(s_m)\|\rangle=-qT(s_m)/\|L(s_m)\|+O(1)/\|L(s_m)\|.
$$ 
The right-hand side tends to $0$, hence $\langle\nu,\xi\rangle=0$, contradicting the hypothesis.
Arguing exactly as in Bergman's asymptotic theory of logarithmic directions \cite{Bergman71}, the absence of limiting directions orthogonal to $\nu$ implies that $L(s)/\log(1/|s|)$ is bounded near the puncture.
 Thus $\|L(s)\|/T(s)$ is bounded near $0$.

The boundedness of $L(s)/T(s)$ implies that its cluster set in $\R^n$ is nonempty and compact. Indeed, choose any sequence $s_m\to0$; then the bounded sequence $L(s_m)/T(s_m)$ has a convergent subsequence. Let $v$ be a cluster value of $L(s)/T(s)$, and choose $s_m\to0$ such that $L(s_m)/T(s_m)\to v$. Dividing $\langle\nu,L(s_m)\rangle=-qT(s_m)+O(1)$ by $T(s_m)$ gives $\langle\nu,L(s_m)/T(s_m)\rangle=-q+O(1)/T(s_m)$, hence $\langle\nu,v\rangle=-q$. In particular $v\ne0$. Since $L(s_m)/T(s_m)\to v\ne0$, we have $\|L(s_m)\|\to+\infty$ and $L(s_m)/\|L(s_m)\|\to v/\|v\|$. Therefore $\xi=v/\|v\|$ belongs to the local logarithmic limit set of $Z$ at $0$. Moreover, because $v=\|v\|\xi$ and $\langle\nu,v\rangle=-q$, the vector $v$ is uniquely determined by $\xi$, namely $v=(-q/\langle\nu,\xi\rangle)\xi$.

Since the local logarithmic limit set is finite and contains no vector orthogonal to $\nu$, the possible vectors $(-q/\langle\nu,\xi\rangle)\xi$ form a finite set. Hence the cluster set of $L(s)/T(s)$ is finite and nonempty. For each coordinate $i$, the cluster set in $\R$ of $\log|Z_i(s)|/T(s)$ is the image of this finite nonempty vector cluster set under the $i$-th coordinate projection. Hence it is finite and nonempty. Lemma \ref{scalar-lemma} applies to $h=Z_i$, and it follows that $Z_i$ is meromorphic at $0$. Since this holds for every $i$, all coordinate functions extend meromorphically across the puncture.
\end{proof}

\begin{remark}
The role of Lemma \ref{scalar-lemma} in the local monomial meromorphic-extension lemma is precise. The monomial identity $Z^\nu=s^q u(s)$ with $q\ne0$ and the exclusion of local logarithmic limiting directions orthogonal to $\nu$ force the vector quotient $L(s)/T(s)$ to have a finite nonempty cluster set. Its coordinate projections therefore give finite nonempty scalar cluster sets for each $\log|Z_i(s)|/T(s)$. The scalar logarithmic growth lemma then excludes essential singularities of the individual coordinate functions.
\end{remark}

\begin{theorem}\label{curve-thm}  
Let $C\subset(\C^*)^n$ be a closed analytic curve with finitely many irreducible components. Assume that $\mathscr L^\infty(C)$ is a finite rational subset of $S^{n-1}$. Then $C$ is algebraic.
\end{theorem}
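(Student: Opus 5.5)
We follow the strategy outlined in the introduction. Since $\mathscr L^\infty(C)$ is a finite subset of $S^{n-1}$, the union of the hyperplanes $\xi^\perp$ for $\xi\in\mathscr L^\infty(C)$ is a finite union of proper subspaces of $\R^n$. Hence we may choose $\nu\in\Z^n$ with $\langle\nu,\xi\rangle\neq0$ for every $\xi\in\mathscr L^\infty(C)$, and we set $f=\chi^\nu|_C:C\to\C^*$.

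\textbf{Step 1: $f$ is proper.} Let $z_m\in C$ leave every compact subset of $(\C^*)^n$ while $|\log|f(z_m)||=|\langle\nu,\Log z_m\rangle|$ stays bounded. Then $\|\Log z_m\|\to\infty$, and after passing to a subsequence $\Log z_m/\|\Log z_m\|\to\xi\in\mathscr L^\infty(C)$. Dividing by $\|\Log z_m\|$ gives $\langle\nu,\xi\rangle=0$, which contradicts the choice of $\nu$. So $f$ is proper.

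\textbf{Step 2: compactify the normalization.} Let $\pi:\tilde C\to C$ be the normalization; $\tilde C$ is a finite disjoint union of connected Riemann surfaces. The map $\tilde f=f\circ\pi$ is proper and nonconstant on each component, since a constant $f$ would give a noncompact closed curve inside a fiber of $f$, contradicting properness. Proper holomorphic maps between Riemann surfaces are finite branched coverings. Each component of $\tilde C$ is therefore a finite branched cover of $\C^*$, which extends to a finite branched cover of $\PP^1$ by the Riemann existence theorem, or equivalently by filling in the punctured-disc ends over $0$ and $\infty$. This yields a compact Riemann surface $\bar C\supset\tilde C$ with finitely many added points. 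Near an added point $p$ we take a local coordinate $s$; then $\tilde f=s^q u(s)$ with $q\neq0$ (positive over $0$, negative over $\infty$) and $u(0)\neq0$.

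\textbf{Step 3: meromorphic extension of the coordinates.} Write $Z=(Z_1,\dots,Z_n)=\pi$ in the coordinate $s$ on a punctured disc around $p$. The local logarithmic limit set of $Z$ at $0$ is contained in $\mathscr L^\infty(C)$: as $s\to0$ the points $\pi(s)$ leave every compact subset of $(\C^*)^n$ by properness, so $\|\Log\|\to\infty$. This local set is therefore finite and contains no vector orthogonal to $\nu$. Moreover $Z^\nu=s^qu(s)$. Lemma~\ref{local-extension-lemma} then shows that every $Z_i$ is meromorphic at $s=0$, so $\pi$ extends to a holomorphic map $\bar\pi:\bar C\to(\PP^1)^n$.

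\textbf{Step 4: algebraicity.} By Remmert's proper mapping theorem, $\bar\pi(\bar C)$ is a compact analytic curve in $(\PP^1)^n$. Applying Chow's theorem through the Segre embedding, it is algebraic. We claim $\bar\pi(\bar C)\cap(\C^*)^n=C$. The inclusion $\supset$ is clear. For $\subset$: the added points map into $(\PP^1)^n\setminus(\C^*)^n$, because $Z^\nu=s^qu\to0$ or $\infty$ as $s\to0$, so some coordinate tends to $0$ or $\infty$. Hence $C$ is the intersection of a projective algebraic curve with the torus, and is algebraic.

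The main obstacle is Step 3, specifically the boundedness of $\|L(s)\|/T(s)$ inside Lemma~\ref{local-extension-lemma}. Here we simply invoke that lemma as stated. A secondary delicate point is Step 2: the compactification of the ends of $\tilde C$ requires the properness and finiteness of $\tilde f$ on each component, which is where the hypothesis of finitely many irreducible components enters.
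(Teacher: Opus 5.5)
Your proposal is correct and follows essentially the same route as the paper. Both arguments choose $\nu$ off the hyperplanes $\xi^\perp$, show $\chi^\nu|_C$ is proper, compactify the normalization over $0$ and $\infty$, apply Lemma~\ref{local-extension-lemma} at each added point, and conclude with Chow's theorem in $(\PP^1)^n$. The differences are cosmetic: you treat all components at once and show $\bar\pi(\bar C)\cap(\C^*)^n=C$ exactly, since $Z^\nu\to0$ or $\infty$ forces the added points into the toric boundary, whereas the paper reduces to irreducible $C$ and argues that $C$ is an irreducible component of that intersection.
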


\begin{proof}
It is enough to prove the theorem when $C$ is irreducible, because a finite union of algebraic curves is algebraic. Assume therefore that $C$ is irreducible. Since $\mathscr L^\infty(C)$ is finite, we can choose a primitive vector $\nu\in\Z^n$ such that $\langle\nu,\xi\rangle\ne0$ for every $\xi\in\mathscr L^\infty(C)$. Indeed, the forbidden vectors lie in the finite union of proper hyperplanes $\{\eta\in\R^n:\langle\eta,\xi\rangle=0\}$, where $\xi$ runs through $\mathscr L^\infty(C)$. Choose an integral vector outside this union and divide by the greatest common divisor of its coordinates.

Let $\chi^\nu:(\C^*)^n\to\C^*$ be the character $\chi^\nu(z)=z^\nu=z_1^{\nu_1}\cdots z_n^{\nu_n}$, and set $f=\chi^\nu|_C$. Then $\log|\chi^\nu(z)|=\langle\nu,\Log z\rangle$. The map $f$ is nonconstant. If $f$ were constant, say $f=a\in\C^*$, then $\langle\nu,\Log z\rangle=\log|a|$ for all $z\in C$. Taking an unbounded sequence in $C$ and passing to a logarithmic limiting direction would give $\xi\in\mathscr L^\infty(C)$ with $\langle\nu,\xi\rangle=0$, contradicting the choice of $\nu$.

We prove that $f$ is proper. Let $K\subset\C^*$ be compact. Since $\log|t|$ is bounded on $K$, there exist real numbers $a<b$ such that $a\le\log|t|\le b$ for all $t\in K$. Hence $a\le\langle\nu,\Log z\rangle\le b$ for all $z\in f^{-1}(K)$. Suppose that $f^{-1}(K)$ is not compact. Since $C$ is a closed analytic subset of the complex manifold $(\C^*)^n$, it is locally compact and metrizable, so there exists a sequence $z_m\in f^{-1}(K)$ with no convergent subsequence in $C$. If $\Log z_m$ had a bounded subsequence, then the corresponding points would lie in a compact polyannulus $\{z\in(\C^*)^n:e^{-R}\le |z_i|\le e^R,\ 1\le i\le n\}$, and since $C$ is closed this would give a convergent subsequence in $C$, a contradiction. Therefore, after passing to a subsequence, $\|\Log z_m\|\to+\infty$. Passing to a further subsequence, $\Log z_m/\|\Log z_m\|\to\xi\in S^{n-1}$, and then $\xi\in\mathscr L^\infty(C)$. Dividing $a\le\langle\nu,\Log z_m\rangle\le b$ by $\|\Log z_m\|$ gives $\langle\nu,\xi\rangle=0$, again a contradiction. Thus $f^{-1}(K)$ is compact for every compact $K\subset\C^*$, and $f$ is proper.

Since $f$ is a nonconstant proper holomorphic map from an irreducible analytic curve to the Riemann surface $\C^*$, its fibers are compact zero-dimensional analytic sets, hence finite. Therefore $f$ is a finite holomorphic map. Let $\pi:\widetilde C\to C$ be the normalization. 
 Then $\widetilde C$ is a connected Riemann surface and $g=f\circ\pi:\widetilde C\to\C^*$ is finite and proper. Hence $g$ compactifies over $0$ and $\infty$: there exists a compact Riemann surface $\overline C$ containing $\widetilde C$ as the complement of finitely many points, and  by the theory of finite holomorphic maps between Riemann surfaces \cite[Chapters~18--19]{Forster81},
$g$ extends to a holomorphic map $\overline g:\overline C\to\PP^1$.

If $p\in\overline C\setminus\widetilde C$, then in a local coordinate $s$ centered at $p$ one has $g(s)=s^q u(s)$ on a punctured disc, where $q\in\Z\setminus\{0\}$ and $u$ is holomorphic with $u(0)\ne0$.

For each coordinate function, define $Z_i=z_i\circ\pi:\widetilde C\to\C^*$. Fix $p\in\overline C\setminus\widetilde C$ and use a coordinate $s$ centered at $p$. Then $Z=(Z_1,\ldots,Z_n)$ defines a holomorphic map $\Delta^*\to(\C^*)^n$, and the identity $g=Z^\nu$ gives $Z^\nu=s^q u(s)$. The local logarithmic limit set of this map is contained in the global set $\mathscr L^\infty(C)$, hence it is finite and contains no vector orthogonal to $\nu$. By Lemma \ref{local-extension-lemma}, each $Z_i$ is meromorphic at $p$. Since $p$ was arbitrary, each $Z_i$ extends meromorphically to the compact Riemann surface $\overline C$.

Therefore $Z=(Z_1,\ldots,Z_n):\widetilde C\to(\C^*)^n$ extends to a meromorphic map $\overline Z:\overline C\dashrightarrow(\PP^1)^n$. Since $\overline C$ is a smooth compact curve, this meromorphic map is holomorphic after assigning values at finitely many points. The image $\overline Z(\overline C)$ is a compact analytic curve in the projective variety $(\PP^1)^n$. By Chow's theorem \cite{Chow49}, it is algebraic. Hence $\overline Z(\overline C)\cap(\C^*)^n$ is an algebraic curve in the torus.

On $\widetilde C$, the map $\overline Z$ agrees with the normalization map onto $C$. Hence $C\subset\overline Z(\overline C)\cap(\C^*)^n$. Both sides are one-dimensional analytic subsets of the torus, and $C$ is irreducible and closed. Hence $C$ is an irreducible component of the algebraic curve $\overline Z(\overline C)\cap(\C^*)^n$. Therefore $C$ is algebraic. If the original curve has finitely many irreducible components, the same proof applies to each component, and the finite union of the resulting algebraic curves is algebraic.
\end{proof}

%%%%%%%%%%%%%%%%%%%%%%%%%%%%%%%%%%%%%%%%%%%%%%%%%%%%%%%%%%%%%%%%%%%%%%%%%%%%%%
%%%%%%%%%%%%%%%%%%%%%%%%%%%%%%%%%%%%%%%%%%%%%%%%%%%%%%%%%%%%%%%%%%%%%%%%%%%%%%

%%%%%%%%%%%%%%%%%%%%%%%%%%%%%%%%%%%%%%%%%%%%%%%%%%%%%%%%%%%%%%%%%%%%%%%%%%%
%%%%%%%%%%%%%%%%%%%%%%%%%%%%%%%%%%%%%%%%%%%%%%%%%%%%%%%%%%%%%%%%%%%%%%%%%%%

\section{Finite logarithmic type}

Let $T=(\mathbb C^*)^n$ be the complex algebraic torus with character lattice $M\simeq\mathbb Z^n$ and one-parameter subgroup lattice $N=\operatorname{Hom}(M,\mathbb Z)$.  The logarithmic map $\Log:T\to\mathbb R^n$ records the absolute values of the torus coordinates, and the behavior of an analytic subvariety $V\subset T$ at infinity is measured by the asymptotic directions of $\Log(V)$.  When one wants to prove algebraicity from tropical finiteness hypotheses, it is not enough merely to know that the logarithmic limit set is rational polyhedral.  One also needs a finite-type condition excluding essential singularities along the toric boundary.  So, the way to formulate this condition is local on a toric compactification.

The purpose of this section is to formulate a  toric boundary condition.  
The resulting condition says that after choosing a smooth complete toric compactification $X_\Sigma$ of $T$, the analytic equations of $V$ extend meromorphically across the toric boundary with uniformly bounded pole order along the invariant boundary divisors.  In local toric coordinates this simply says that each defining function becomes holomorphic after multiplication by a fixed monomial in the boundary coordinates.  This is the analytic form of {\em  finite logarithmic type}.  It is equivalent to analytic extendability of the defining ideal to the toric compactification after twisting by a boundary divisor.

We work with smooth complete fans in order to keep the proof completely local.  
For singular toric varieties the same statement can be formulated using torus-invariant Weil divisors and reflexive sheaves, but the smooth case is the most transparent and is enough for the applications after passing to a toric resolution. 

We denote by $\mathcal O(T)$ the ring of global holomorphic functions on $(\mathbb{C}^*)^n$.  

%%%%%%%%%%%%%%%%%%%%%%%%%%%%%%%%%%%%%%%%%%%%%%%%%%%%%%%%%%%%%%%%%%%%%%%%%%%%
%%%%%%%%%%%%%%%%%%%%%%%%%%%%%%%%%%%%%%%%%%%%%%%%%%%%%%%%%%%%%%%%%%%%%%%%%%%%

Let
$
T=(\mathbb C^\ast)^n
$
and let
$
V\subset T
$
be a closed reduced analytic subvariety. Let
$
X_\Sigma
$
be a toric compactification of $T$, and let
$
D_\rho,\,\, \rho\in\Sigma(1),
$
be the torus-invariant prime divisors of the toric boundary
$
X_\Sigma\setminus T.
$
 Let $\mathcal I_V\subset\mathcal O_T$ denotes  the analytic ideal sheaf defining $V$ in the torus.

\begin{definition}[Finite logarithmic type]\label{def:flt}
We say that $V$ is of finite logarithmic type with respect to $X_\Sigma$ if there exist an effective torus-invariant divisor
$
E=\sum_{\rho\in\Sigma(1)}M_\rho D_\rho
$
and a coherent analytic subsheaf
$
\mathcal J\subset\mathcal O_{X_\Sigma}(E)
$
such that, under the canonical trivialization
$
\mathcal O_{X_\Sigma}(E)|_T\simeq\mathcal O_T,
$
one has
$
\mathcal J|_T=\mathcal I_V.
$
\end{definition}

\medskip

This definition says that the ideal sheaf of $V$ extends coherently across the toric boundary after allowing bounded pole orders along the invariant boundary divisors. It is important that the condition is imposed on the ideal sheaf itself and not on an arbitrary choice of generators.

\medskip

\begin{definition}[Coherent meromorphic extension with bounded poles]\label{def:cme} 
We say that $\mathcal I_V$ admits a coherent meromorphic extension with bounded poles along $D_\Sigma$ if there exist an effective torus-invariant divisor $E=\sum_\rho M_\rho D_\rho$ and a coherent analytic subsheaf $\mathcal J\subset \mathcal O_{X_\Sigma}(E)$ such that $\mathcal J|_T=\mathcal I_V$ under the canonical trivialization $\mathcal O_{X_\Sigma}(E)|_T\simeq \mathcal O_T$.
\end{definition}

\medskip

\begin{proposition}
Definitions \ref{def:flt} and \ref{def:cme}    are equivalent. In fact, they are the same condition written with different terminology.
\end{proposition}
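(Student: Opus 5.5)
The proposition is essentially tautological, so the plan is to compare the two definitions clause by clause and check that the data required by each is literally the same. Both Definition~\ref{def:flt} and Definition~\ref{def:cme} fix the same ambient objects: the toric compactification $X_\Sigma$, its boundary divisors $D_\rho$ for $\rho\in\Sigma(1)$, and the ideal sheaf $\mathcal I_V\subset\mathcal O_T$. Both then ask for a pair $(E,\mathcal J)$ consisting of an effective torus-invariant divisor $E=\sum_\rho M_\rho D_\rho$ and a coherent analytic subsheaf $\mathcal J\subset\mathcal O_{X_\Sigma}(E)$ with $\mathcal J|_T=\mathcal I_V$ under the canonical trivialization. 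I will write out this list of conditions explicitly for each definition and observe that the two lists coincide. Hence any witness $(E,\mathcal J)$ for one definition is a witness for the other, and both implications follow with the same data.

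To keep the argument honest, I will also check that the phrase ``bounded poles along $D_\Sigma$'' in Definition~\ref{def:cme} adds nothing beyond the inclusion $\mathcal J\subset\mathcal O_{X_\Sigma}(E)$. For this I will use the description recalled in the preliminaries for smooth complete $X_\Sigma$:
$$
\mathcal O_{X_\Sigma}(E)(U)=\{f\in\mathcal M_{X_\Sigma}(U)\,;\,\operatorname{ord}_{D_\rho}(f)\ge -M_\rho\ \text{for all}\ \rho\}.
$$
From this, $\mathcal J$ is automatically a coherent subsheaf of $\mathcal M_{X_\Sigma}$ whose sections have pole order at most $M_\rho$ along each $D_\rho$. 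This is exactly the content of ``coherent meromorphic extension with bounded poles''.

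I also need to confirm that the canonical trivialization is the same in both definitions. Since $E$ is supported on $X_\Sigma\setminus T$, the inclusion $\mathcal O_{X_\Sigma}\subset\mathcal O_{X_\Sigma}(E)$ becomes an equality on $T$. Both definitions restrict along this identification, so the two conditions $\mathcal J|_T=\mathcal I_V$ are the same statement.

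The only step needing any care is this last point about making the identifications precise. If $X_\Sigma$ were singular, I would read $\mathcal O_{X_\Sigma}(E)$ as the reflexive sheaf attached to the Weil divisor $E$. It still sits inside $\mathcal M_{X_\Sigma}$ and is still trivial on $T$, so the same comparison goes through unchanged. Apart from this, no real obstacle is expected.
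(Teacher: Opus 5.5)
Your proposal is correct and matches the paper's proof. Both arguments simply observe that the two definitions require the identical data $(E,\mathcal J)$, namely an effective invariant $E$ and a coherent $\mathcal J\subset\mathcal O_{X_\Sigma}(E)$ with $\mathcal J|_T=\mathcal I_V$, so a witness for one definition is a witness for the other. Your extra checks on the valuative description and the trivialization on $T$ are harmless but unnecessary, since ``bounded poles'' in Definition~\ref{def:cme} is defined by exactly that inclusion.
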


\begin{proof}
Assume first that $V$ is of finite logarithmic type with respect to $X_\Sigma$ in the sense of Definition \ref{def:flt}. Then there exist an effective torus-invariant divisor $E=\sum_{\rho\in\Sigma(1)}M_\rho D_\rho$ and a coherent analytic subsheaf $\mathcal J\subset\mathcal O_{X_\Sigma}(E)$ such that $\mathcal J|_T=\mathcal I_V$ under the canonical trivialization $\mathcal O_{X_\Sigma}(E)|_T\simeq\mathcal O_T$. This is exactly the condition required in Definition \ref{def:cme}. Therefore $\mathcal I_V$ admits a coherent meromorphic extension with bounded poles along $D_\Sigma$.
Conversely, assume that $\mathcal I_V$ admits a coherent meromorphic extension with bounded poles along $D_\Sigma$ in the sense of Definition \ref{def:cme}. Then there exist an effective torus-invariant divisor $E=\sum_\rho M_\rho D_\rho$ and a coherent analytic subsheaf $\mathcal J\subset\mathcal O_{X_\Sigma}(E)$ such that $\mathcal J|_T=\mathcal I_V$ under the canonical trivialization $\mathcal O_{X_\Sigma}(E)|_T\simeq\mathcal O_T$. This is precisely the condition required in Definition \ref{def:flt}. Hence $V$ is of finite logarithmic type with respect to $X_\Sigma$.
Thus each definition implies the other, and the two definitions are equivalent.
\end{proof}

\begin{theorem}\label{thm:main1} 
Let $V\subset(\mathbb C^\ast)^n$ be a closed reduced analytic subvariety with finitely many irreducible components. Assume that $\mathscr L^\infty(V)$ is a finite rational spherical polyhedral complex of dimension $\dim_{\mathbb C}V-1$. Let $\Sigma$ be a complete rational fan refining the conical stratification determined by $\mathscr L^\infty(V)$; such a fan exists automatically. Assume, in addition, that $V$ is of finite logarithmic type with respect to this fan $\Sigma$. Then $V$ is algebraic. More precisely, $V$ is cut out in $(\mathbb C^\ast)^n$ by finitely many Laurent polynomials.
\end{theorem}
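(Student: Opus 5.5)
The plan is to transport the coherent extension $\mathcal J$ to a projective toric variety, twist it into a coherent ideal sheaf of $\mathcal O_{X_\Sigma}$, and then apply GAGA. The first step is a reduction. Refine $\Sigma$ to a smooth projective fan $\Sigma'$; such a refinement exists by toric resolution together with toric Chow lemma. The toric morphism $\pi:X_{\Sigma'}\to X_\Sigma$ is proper and is the identity on $T$. The pullback $\pi^{-1}\mathcal J\cdot\mathcal O_{X_{\Sigma'}}\subset\pi^*\mathcal O_{X_\Sigma}(E)$ lands in $\mathcal O_{X_{\Sigma'}}(\pi^*E)$, at least after enlarging the $M_\rho$ so that $E$ becomes Cartier. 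It is coherent and still restricts to $\mathcal I_V$ on $T$. So we may assume that $X_\Sigma$ is smooth and projective. The hypotheses on $\mathscr L^\infty(V)$ are used only to guarantee that such an adapted fan exists; they are not needed for the algebraization itself.

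The second step is to form the ideal sheaf $\mathcal K=\mathcal J\otimes\mathcal O_{X_\Sigma}(-E)$. Since $E$ is effective, $\mathcal K\subset\mathcal O_{X_\Sigma}(E)\otimes\mathcal O_{X_\Sigma}(-E)=\mathcal O_{X_\Sigma}$. It is a coherent analytic ideal sheaf with $\mathcal K|_T=\mathcal I_V$, because $E$ is supported on the boundary. Let $W$ be the analytic subspace of $X_\Sigma$ defined by $\mathcal K$. Then $W\cap T=V$, both set-theoretically and scheme-theoretically.

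The third step applies GAGA. Since $X_\Sigma$ is projective, $\mathcal K$ is the analytification of a coherent algebraic ideal sheaf $\mathcal K^{alg}\subset\mathcal O_{X_\Sigma}^{alg}$. Equivalently, Chow's theorem applied to the closed analytic set $W$ shows that $W$ is Zariski closed. Restricting to the affine open set $T=\operatorname{Spec}\mathbb C[M]$, the ideal $\mathcal K^{alg}(T)\subset\mathbb C[M]$ is finitely generated by the Hilbert basis theorem, say by Laurent polynomials $p_1,\dots,p_r$. Their common zero locus is $W\cap T=V$. Because analytification is compatible with restriction to open sets, the $p_j$ also generate $\mathcal I_V$ analytically, which gives the "more precisely" statement.

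I expect the main obstacle to be the first step. Pulling back a coherent subsheaf of $\mathcal M_{X_\Sigma}$ along a modification, one must check that it remains a coherent subsheaf of $\mathcal O(\pi^*E)$ rather than acquiring torsion, and one must handle a Weil divisor $E$ when $X_\Sigma$ is singular. To deal with this, I would take the image of $\pi^*\mathcal J\to\mathcal M_{X_{\Sigma'}}$, which is coherent because it is locally generated by pullbacks of local generators. I would then bound its pole orders by $\operatorname{ord}_{D_{\rho'}}(\chi^m)$ computed through the support function of a Cartier multiple of $E$. If the fan in the statement is already smooth and projective, this step disappears entirely.
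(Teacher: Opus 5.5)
Your proof is correct, and its core is the same as the paper's. You twist $\mathcal J$ by $\mathcal O_{X_\Sigma}(-E)$ to get a coherent ideal $\mathcal K\subset\mathcal O_{X_\Sigma}$ with $\mathcal K|_T=\mathcal I_V$, algebraize it by GAGA, and take Laurent generators from the Noetherianity of $\mathbb C[M]$.

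The genuine difference is your preliminary passage to a smooth projective refinement. The paper skips this step. It stays on the complete $X_\Sigma$, uses GAGA in its proper (Grothendieck) form, and clears poles with local equations $s_E$. That tacitly assumes $E$ is Cartier, in line with the paper's standing convention of working with smooth fans. Your route removes that assumption and needs only projective GAGA, or merely Chow's theorem plus reducedness of analytifications. The paper's route is shorter.

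One repair is needed in your first step. Do not rely on a Cartier multiple of $E$: none need exist if $\Sigma$ is not simplicial. Nor rely on an effective Cartier divisor dominating $E$ on $X_\Sigma$: none need exist if $X_\Sigma$ is not projective. Work chart by chart instead:
\begin{itemize}
\item On $U_\sigma$, choose $m_\sigma\in M$ in the interior of $\sigma^\vee$ with $\langle m_\sigma,u_\rho\rangle\ge M_\rho$ for all $\rho\in\sigma(1)$.
\item For a local section $g$ of $\mathcal J$ over an open subset of $U_\sigma$, the function $\chi^{m_\sigma}g$ has effective divisor, so it is holomorphic by normality.
\item Hence $\operatorname{ord}_{D_{\rho'}}(\pi^*g)\ge-\langle m_\sigma,u_{\rho'}\rangle$ for every ray $\rho'\subset\sigma$ of $\Sigma'$. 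These bounds are nonpositive because $m_\sigma\in\sigma^\vee$.
\item Maximizing over the finitely many $\sigma$ gives an effective divisor $E'$ on the smooth $X_{\Sigma'}$ containing the image of $\pi^*\mathcal J$ in $\mathcal M_{X_{\Sigma'}}$. That image is then coherent as a finitely generated subsheaf of $\mathcal O_{X_{\Sigma'}}(E')$.
\end{itemize}
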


%%%%%%%%%%%%%%%%%%%%%%%
%%%%%%%%%%%%%%%%%%%%%%%

\subsection*{Minimum useful hypothesis}

The finite rationality of $\mathscr L^\infty(V)$ controls the possible directions in which the amoeba may escape, but it does not by itself control the analytic behavior of functions along those directions.  The missing obstruction is essential singularity at infinity on monomial curves.  If $u\in\Z^n$ is a rational direction and $f(z)=\sum_\alpha c_\alpha z^\alpha$, the restriction $g(t)=f(e^b t^u)$ has the form $g(t)=\sum_\alpha c_\alpha e^{\langle b,\alpha\rangle}t^{\langle u,\alpha\rangle}$.  If the integers $\langle u,\alpha\rangle$ are unbounded above and the corresponding coefficients do not cancel identically as functions of $b$, then $g$ has infinitely many positive powers of $t$, or equivalently $g(1/s)$ has infinitely many negative powers of $s$.  Thus $g$ has an essential singularity at $\infty$.  The finite-logarithmic-type hypothesis is exactly the exclusion of this phenomenon, uniformly in the translate $b$ and in all boundary directions of a complete rational fan.

The proof of Theorem \ref{thm:main1}  only uses the following two facts: amoeba directions are rationally organized by a finite fan, and along each rational boundary direction the one-variable restrictions have finite pole order at infinity.  The first condition gives finitely many boundary directions to test, while the second condition converts these tests into linear inequalities on the exponents.  Completeness of the fan then turns those inequalities into a bounded polytope.

%%%%%%%%%%%%%%%%%%%%%%%%%%%%%%%%%%%%

\medskip

{\it Proof of Theorem \ref{thm:main1}}

Assume that $V\subset T=(\mathbb C^\ast)^n$ is a closed reduced analytic subvariety with finitely many irreducible components. Assume that $\mathscr L^\infty(V)$ is a finite rational spherical polyhedral complex of dimension $\dim_{\mathbb C}V-1$. Let $\Sigma$ be a complete rational fan refining the conical stratification determined by $\mathscr L^\infty(V)$. Assume moreover that $V$ is of finite logarithmic type with respect to $X_\Sigma$ in the sense that there exist an effective torus-invariant divisor
$
E=\sum_{\rho\in\Sigma(1)}M_\rho D_\rho
$
and a coherent analytic subsheaf
$
\mathcal J\subset \mathcal O_{X_\Sigma}(E)
$
such that
$
\mathcal J|_T=\mathcal I_V.
$

\medskip

The first point is to explain the geometric meaning of this hypothesis. The sheaf $\mathcal O_{X_\Sigma}(E)$ is the sheaf of meromorphic functions whose poles along each toric divisor $D_\rho$ are bounded by $M_\rho$. Thus the inclusion
$
\mathcal J\subset \mathcal O_{X_\Sigma}(E)
$
means that every local section of $\mathcal J$ is a meromorphic function whose pole orders along the toric boundary are uniformly bounded by the fixed divisor $E$.

\medskip

Choose a toric chart $U\subset X_\Sigma$. Since $\mathcal J$ is coherent, there exist finitely many local generators
$
f_1,\dots,f_q
$
of $\mathcal J|_U$.
Because each $f_j$ is a section of $\mathcal O_{X_\Sigma}(E)$, there exists on $U$ a local defining equation $s_E$ of the divisor $E$ such that
$
s_Ef_j
$
is holomorphic on $U$.
If
$
E|_U=m_1D_1+\cdots+m_rD_r
$
and
$
D_i=\{t_i=0\},
$
then one may take
$
s_E=t_1^{m_1}\cdots t_r^{m_r}.
$
Consequently
$
g_j:=s_Ef_j
$
is holomorphic on $U$ for every $j$.
The local ideal generated by the holomorphic functions
$
g_1,\dots,g_q
$
defines a coherent analytic ideal sheaf on $U$.
The crucial point is that these local ideals glue on overlaps.

\medskip

Let $U_a$ and $U_b$ be two toric charts.
Let $s_a$ and $s_b$ be local defining equations of the same divisor $E$.
Since they define the same Cartier divisor, their quotient
$
u_{ab}=s_a/s_b
$
is a nowhere vanishing holomorphic function on
$
U_a\cap U_b.
$
Hence $u_{ab}$ is a unit.
If $f$ is a local section of $\mathcal J$ on
$
U_a\cap U_b,
$
then
$
s_af=u_{ab}(s_bf).
$
Therefore multiplication by $s_a$ and multiplication by $s_b$ produce generators differing only by a unit.
Since multiplying generators of an ideal by units does not change the ideal, the local ideals obtained from
$
s_a\mathcal J
$
and
$
s_b\mathcal J
$
coincide on the overlap.

\medskip

Thus the local ideals glue to a coherent analytic ideal sheaf
$
\mathcal K\subset \mathcal O_{X_\Sigma}.
$
The sheaf $\mathcal K$ can also be written intrinsically as
$
\mathcal K=\mathcal J\otimes \mathcal O_{X_\Sigma}(-E).
$
Indeed,
$
\mathcal O_{X_\Sigma}(E)\otimes \mathcal O_{X_\Sigma}(-E)
\simeq
\mathcal O_{X_\Sigma},
$
and tensoring the inclusion
$
\mathcal J\hookrightarrow \mathcal O_{X_\Sigma}(E)
$
with
$
\mathcal O_{X_\Sigma}(-E)
$
gives an inclusion
$
\mathcal K
=
\mathcal J\otimes\mathcal O_{X_\Sigma}(-E)
\hookrightarrow
\mathcal O_{X_\Sigma}.
$
Since $\mathcal K$ is a coherent subsheaf of the structure sheaf, it is a coherent analytic ideal sheaf.

We now compute the restriction of $\mathcal K$ to the torus.
The support of $E$ is contained in the toric boundary
$
X_\Sigma\setminus T.
$
Therefore the divisor $E$ restricts to zero on $T$.
Consequently
$
\mathcal O_{X_\Sigma}(E)|_T
\simeq
\mathcal O_T
$
and
$
\mathcal O_{X_\Sigma}(-E)|_T
\simeq
\mathcal O_T.
$
Hence
$
\mathcal K|_T
=
(\mathcal J\otimes\mathcal O_{X_\Sigma}(-E))|_T
=
\mathcal J|_T.
$
Using the finite logarithmic type hypothesis,
$
\mathcal J|_T=\mathcal I_V,
$
and therefore
$
\mathcal K|_T=\mathcal I_V.
$
Let
$
Z\subset X_\Sigma^{an}
$
be the analytic subspace defined by the coherent ideal sheaf $\mathcal K$.
Since the restriction of $\mathcal K$ to $T$ is $\mathcal I_V$, one has
$
Z\cap T=V
$
as analytic subspaces.

\medskip

The next point is the algebraization of $\mathcal K$.
Since $\Sigma$ is complete, the toric variety $X_\Sigma$ is proper over $\mathbb C$.
The proper form of GAGA states that analytification induces an equivalence between coherent algebraic sheaves on $X_\Sigma$ and coherent analytic sheaves on $X_\Sigma^{an}$.

Applying GAGA to the coherent analytic ideal sheaf $\mathcal K$, there exists a unique coherent algebraic ideal sheaf
$
\mathcal K^{alg}\subset \mathcal O_{X_\Sigma}
$
whose analytification is $\mathcal K$.
Let
$
Y\subset X_\Sigma
$
be the algebraic closed subspace defined by
$
\mathcal K^{alg}.
$
By construction,
$
Y^{an}
=
Z.
$
Restricting to the torus,
$
(Y\cap T)^{an}
=
Y^{an}\cap T
=
Z\cap T
=
V.
$
Therefore $V$ is the analytification of the algebraic subvariety
$
Y\cap T.
$
Hence $V$ is algebraic.
It remains to prove that $V$ is cut out by finitely many Laurent polynomials.
The torus $T$ is affine:
$
T=
\operatorname{Spec}
\mathbb C[z_1^{\pm1},\ldots,z_n^{\pm1}].
$
The Laurent polynomial ring
$
A=
\mathbb C[z_1^{\pm1},\ldots,z_n^{\pm1}]
$
is Noetherian because it is a localization of the polynomial ring
$
\mathbb C[z_1,\ldots,z_n].
$

Since
$
Y\cap T
$
is an algebraic subvariety of the affine variety $T$, it corresponds to an ideal
$
I(V)\subset A.
$
Since $A$ is Noetherian, the ideal $I(V)$ is generated by finitely many elements
$
P_1,\dots,P_N\in A.
$
Each generator is a Laurent polynomial.
Therefore
$$
V=
\{z\in(\mathbb C^\ast)^n:
P_1(z)=\cdots=P_N(z)=0\}.
$$
Thus $V$ is cut out by finitely many Laurent polynomials.
This proves the theorem.
\qed

\bigskip

The  new ingredient in the theorem is the finite logarithmic type condition. In the form used here, finite logarithmic type means that there exist an effective torus-invariant divisor
$
E=\sum_{\rho\in\Sigma(1)} M_\rho D_\rho
$
and a coherent analytic subsheaf
$
\mathcal J\subset \mathcal O_{X_\Sigma}(E)
$
whose restriction to the dense torus coincides with the ideal sheaf of $V$.
From a conceptual point of view, this condition says that the analytic equations defining $V$ have uniformly bounded pole growth along the toric boundary and therefore extend to coherent meromorphic data on the compactification.

\medskip

The theorem is therefore  interpreted as a logarithmic analogue of the classical Chow--GAGA philosophy. Chow's theorem asserts that closed analytic subsets of projective space are algebraic. GAGA asserts that coherent analytic sheaves on a proper algebraic variety are algebraic. The present theorem says that an analytic subvariety of the algebraic torus becomes algebraic once its ideal sheaf admits a coherent logarithmic extension across a suitable toric compactification.

The difficult and potentially deep question is whether finite logarithmic type follows from more geometric conditions.
The central open problem suggested by the theorem is the implication:
$
\mathscr L^\infty(V)
\text{ finite rational polyhedral}$ implies
$V
\text{ is of finite logarithmic type}.
$

A proof of such a statement would transform the theorem into a genuine converse to the Bergman--Bieri--Groves theorem. In that situation, rational polyhedrality of the logarithmic limit set would force algebraicity.
From this point of view, the notion of finite logarithmic type should be regarded as the bridge between asymptotic geometry and algebraicity. The theorem shows that once this bridge exists, algebraicity follows from standard algebraic-geometric principles. The deeper problem is to construct that bridge directly from geometric information at infinity.

%%%%%%%%%%%%%%%%%%%%%%%%%%%%%%%%%%%%%%%%%%%%%%%%%%%%%%%%%%%%%%%%%%%%%%%%%%%%%%

\section{Finite Logarithmic Type: An Intrinsic Valuative Formulation}
 
Let $T=(\mathbb C^\ast)^n$ be the complex algebraic torus. Let $X_\Sigma$ be a normal compact toric variety with dense open torus $T$. For each ray $\rho\in\Sigma(1)$, let $D_\rho$ denote the corresponding torus-invariant prime divisor. The toric boundary is
$$
D_\Sigma=X_\Sigma\setminus T=\bigcup_{\rho\in\Sigma(1)}D_\rho.
$$
Let $\mathcal M_{X_\Sigma}$ denote the sheaf of meromorphic functions on $X_\Sigma$. For a meromorphic function $g$ and an invariant prime divisor $D_\rho$, we write $\operatorname{ord}_{D_\rho}(g)$ for the divisorial valuation of $g$ along $D_\rho$. Thus $\operatorname{ord}_{D_\rho}(g)\geq 0$ means that $g$ has no pole along $D_\rho$, while $\operatorname{ord}_{D_\rho}(g)\geq -M_\rho$ means that the pole order of $g$ along $D_\rho$ is at most $M_\rho$.

If $E=\sum_{\rho\in\Sigma(1)}M_\rho D_\rho$ is an effective torus-invariant divisor, then $\mathcal O_{X_\Sigma}(E)$ is naturally a subsheaf of $\mathcal M_{X_\Sigma}$. Intrinsically, it is characterized by
$$
\mathcal O_{X_\Sigma}(E)(U)=
\{g\in\mathcal M_{X_\Sigma}(U)\; ;\;
\operatorname{ord}_{D_\rho}(g)\geq -M_\rho
\text{ for every } \rho \text{ with }D_\rho\cap U\neq\varnothing\}.
$$
This is the valuation-theoretic meaning of allowing finite pole order along the toric boundary.

Let $V\subset T$ be a closed reduced analytic subvariety, and let $\mathcal I_V\subset\mathcal O_T$ be its analytic ideal sheaf.

\begin{theorem} 
Let $V\subset T$ be a closed reduced analytic subvariety. Then $V$ is of finite logarithmic type with respect to $X_\Sigma$ if and only if there exists a coherent analytic subsheaf
$
\mathcal F\subset\mathcal M_{X_\Sigma}
$
such that $\mathcal F|_T=\mathcal I_V$ and there exist nonnegative integers $M_\rho$, one for each $\rho\in\Sigma(1)$, such that for every open set $U\subset X_\Sigma$, every local section $g\in\mathcal F(U)$, and every ray $\rho\in\Sigma(1)$ with $D_\rho\cap U\neq\varnothing$, one has
$$
\operatorname{ord}_{D_\rho}(g)\geq -M_\rho.
$$
\end{theorem}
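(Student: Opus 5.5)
The equivalence is essentially a translation between the sheaf-theoretic inclusion $\mathcal J\subset\mathcal O_{X_\Sigma}(E)$ and the valuative inequalities. The key input is the description of $\mathcal O_{X_\Sigma}(E)$ as a subsheaf of $\mathcal M_{X_\Sigma}$ recalled just before the statement. That description says a meromorphic section lies in $\mathcal O_{X_\Sigma}(E)(U)$ exactly when $\operatorname{ord}_{D_\rho}(g)\ge -M_\rho$ for every $\rho$ with $D_\rho\cap U\neq\varnothing$. I would prove the two implications separately, taking $\mathcal F=\mathcal J$ in one direction and $\mathcal J=\mathcal F$ in the other.

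\emph{Forward direction.} Suppose $V$ is of finite logarithmic type in the sense of Definition~\ref{def:flt}, with $E=\sum M_\rho D_\rho$ effective and $\mathcal J\subset\mathcal O_{X_\Sigma}(E)$ coherent with $\mathcal J|_T=\mathcal I_V$. Since $\mathcal O_{X_\Sigma}(E)\subset\mathcal M_{X_\Sigma}$, I set $\mathcal F=\mathcal J$, which is a coherent subsheaf of $\mathcal M_{X_\Sigma}$. The canonical trivialization $\mathcal O_{X_\Sigma}(E)|_T\simeq\mathcal O_T$ is just the identity inside $\mathcal M_T$, because $E$ is supported on the boundary. Hence $\mathcal F|_T=\mathcal I_V$. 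For any local section $g\in\mathcal F(U)$, the valuative description of $\mathcal O_{X_\Sigma}(E)(U)$ gives $\operatorname{ord}_{D_\rho}(g)\ge -M_\rho$ whenever $D_\rho\cap U\ne\varnothing$. The $M_\rho$ are nonnegative because $E$ is effective.

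\emph{Converse.} Given $\mathcal F$ and integers $M_\rho\ge0$, I put $E=\sum_\rho M_\rho D_\rho$, which is effective and torus-invariant. The uniform inequalities say that each section $g\in\mathcal F(U)$ satisfies the defining condition of $\mathcal O_{X_\Sigma}(E)(U)$, so $\mathcal F(U)\subset\mathcal O_{X_\Sigma}(E)(U)$ for every open $U$. Thus $\mathcal F$ is a subsheaf of $\mathcal O_{X_\Sigma}(E)$, and I take $\mathcal J=\mathcal F$. Coherence as an $\mathcal O_{X_\Sigma}$-module does not depend on the ambient sheaf. Under the trivialization on $T$ we again get $\mathcal J|_T=\mathcal F|_T=\mathcal I_V$, which is Definition~\ref{def:flt}.

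The only point needing care is the valuative characterization of $\mathcal O_{X_\Sigma}(E)$ when $X_\Sigma$ is merely normal rather than smooth. In that case $E$ need not be Cartier, and $\mathcal O_{X_\Sigma}(E)$ must be read as the reflexive divisorial sheaf. I would justify the characterization via normality: a meromorphic function with no poles in codimension one is holomorphic (Riemann extension / Hartogs on normal spaces). Applied locally to $g\chi^{m}$, or to $g$ times a local equation of a Cartier multiple of $E$, this reduces the claim to the divisor conditions. Also, $\operatorname{ord}_{D_\rho}$ must be taken at a generic point of $D_\rho\cap U$, which is well defined because each $D_\rho\cap U$, when nonempty, meets the smooth locus in codimension one. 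Since the paper states this characterization as the definition, I expect the argument to invoke it directly and to be otherwise formal.
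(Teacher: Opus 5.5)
Your proposal is correct and is essentially the paper's proof: $\mathcal F=\mathcal J$ in one direction, and $E=\sum_\rho M_\rho D_\rho$ with $\mathcal J=\mathcal F$ in the other. In both directions the valuative description of $\mathcal O_{X_\Sigma}(E)$, which the paper simply takes as the definition, does all the work. Your extra discussion of the merely normal case goes beyond the paper, with two small corrections. For a non-simplicial fan, $E$ may have no Cartier multiple, so the valuative condition should be taken as the definition of the reflexive sheaf rather than reduced to a Cartier divisor. Also, the order should be checked along every irreducible component of $D_\rho\cap U$, not at a single generic point.
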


\begin{proof}
Assume first that $V$ is of finite logarithmic type. By definition, there exist an effective torus-invariant divisor $E=\sum_{\rho\in\Sigma(1)}M_\rho D_\rho$ and a coherent analytic subsheaf $\mathcal J\subset\mathcal O_{X_\Sigma}(E)$ such that $\mathcal J|_T=\mathcal I_V$. Since $\mathcal O_{X_\Sigma}(E)$ is naturally a subsheaf of $\mathcal M_{X_\Sigma}$, the sheaf $\mathcal J$ may also be regarded as a coherent analytic subsheaf of $\mathcal M_{X_\Sigma}$. Put $\mathcal F=\mathcal J$. Then $\mathcal F|_T=\mathcal I_V$. Let $U\subset X_\Sigma$ be open, let $g\in\mathcal F(U)$, and let $\rho\in\Sigma(1)$ be such that $D_\rho\cap U\neq\varnothing$. Since $\mathcal F\subset\mathcal O_{X_\Sigma}(E)$, the section $g$ belongs to $\mathcal O_{X_\Sigma}(E)(U)$. By the valuation description of $\mathcal O_{X_\Sigma}(E)$, this means precisely that $\operatorname{ord}_{D_\rho}(g)\geq -M_\rho$. Thus the required coherent meromorphic extension and valuation bounds exist.

Conversely, assume that there exists a coherent analytic subsheaf $\mathcal F\subset\mathcal M_{X_\Sigma}$ with $\mathcal F|_T=\mathcal I_V$ and satisfying the valuation bounds. Define $E=\sum_{\rho\in\Sigma(1)}M_\rho D_\rho$. Let $U\subset X_\Sigma$ be open and let $g\in\mathcal F(U)$. By hypothesis, for every $\rho$ with $D_\rho\cap U\neq\varnothing$, one has $\operatorname{ord}_{D_\rho}(g)\geq -M_\rho$. By the valuation characterization of $\mathcal O_{X_\Sigma}(E)$, this implies $g\in\mathcal O_{X_\Sigma}(E)(U)$. Since this holds for every open set $U$ and every section $g\in\mathcal F(U)$, we have an inclusion of sheaves $\mathcal F\subset\mathcal O_{X_\Sigma}(E)$. The sheaf $\mathcal F$ is coherent by assumption and restricts to $\mathcal I_V$ on $T$. Taking $\mathcal J=\mathcal F$, we obtain a coherent analytic subsheaf $\mathcal J\subset\mathcal O_{X_\Sigma}(E)$ with $\mathcal J|_T=\mathcal I_V$. This is exactly the definition of finite logarithmic type.
\end{proof}

\begin{remark}
The theorem shows that finite logarithmic type is not fundamentally a statement about local coordinates or chosen equations. It is a statement about a coherent meromorphic extension of the ideal sheaf whose divisorial valuations along the toric boundary are uniformly bounded below.
\end{remark}

\subsection*{Equivalent Sheaf-Theoretic Formulation}

Suppose that $\mathcal F\subset\mathcal M_{X_\Sigma}$ is a coherent meromorphic subsheaf. A local section of $\mathcal F$ means that  for an open set $U\subset X_\Sigma$, a section $s\in\mathcal F(U)$ is a meromorphic function on $U$ belonging to the sheaf $\mathcal F$.

The poles of all local sections of $\mathcal F$ along the invariant boundary divisors are bounded by one fixed effective torus-invariant divisor',
means that there exists one effective torus-invariant divisor
$$
E=\sum_{\rho\in\Sigma(1)}M_\rho D_\rho,\qquad M_\rho\ge0,
$$
such that, for every open set $U\subset X_\Sigma$, every section $s\in\mathcal F(U)$, and every ray $\rho\in\Sigma(1)$ with $D_\rho\cap U\neq\varnothing$, one has
$$
\operatorname{ord}_{D_\rho}(s)\ge -M_\rho.
$$

Here $\operatorname{ord}_{D_\rho}(s)$ is the order of the meromorphic function $s$ along the divisor $D_\rho$. If $\operatorname{ord}_{D_\rho}(s)=-a<0$, then $s$ has a pole of order $a$ along $D_\rho$. Thus the inequality $\operatorname{ord}_{D_\rho}(s)\ge -M_\rho$ means that the pole order of $s$ along $D_\rho$ is at most $M_\rho$.
The important point is that the divisor $E$ is fixed once and for all. It does not depend on the open set $U$, nor on the section $s$. It gives a uniform pole bound for every local section of the sheaf $\mathcal F$.
Equivalently, the statement means that
$
\mathcal F\subset\mathcal O_{X_\Sigma}(E)
$
as subsheaves of $\mathcal M_{X_\Sigma}$.
Indeed, by definition,
$$
\mathcal O_{X_\Sigma}(E)(U)
=
\left\{
s\in\mathcal M_{X_\Sigma}(U)\; ;\;
\operatorname{ord}_{D_\rho}(s)\ge -M_\rho
\text{ for all }\rho \text{ with }D_\rho\cap U\neq\varnothing
\right\}.
$$
Thus saying that all local sections of $\mathcal F$ have poles bounded by $E$ is exactly the same as saying that $\mathcal F$ is a subsheaf of $\mathcal O_{X_\Sigma}(E)$.
Since $E$ is supported on the toric boundary $X_\Sigma\setminus T$, the line bundle $\mathcal O_{X_\Sigma}(E)$ is canonically trivial on $T$. Therefore it makes sense to compare $\mathcal J|_T$ with $\mathcal I_V$.

\medskip

\begin{corollary}
Let $V\subset T$ be a closed reduced analytic subvariety. Then $V$ is of finite logarithmic type with respect to $X_\Sigma$ if and only if the ideal sheaf $\mathcal I_V$ admits a coherent meromorphic extension
$
\mathcal F\subset\mathcal M_{X_\Sigma}
$
such that $\mathcal F|_T=\mathcal I_V$ and the poles of all local sections of $\mathcal F$ along the invariant boundary divisors are bounded by one fixed effective torus-invariant divisor.
\end{corollary}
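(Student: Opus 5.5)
The corollary is essentially a restatement of the valuative theorem proved just above, so the plan is to reduce it to that theorem together with the sheaf-theoretic discussion preceding the corollary. The only work is to check that the phrase ``the poles of all local sections of $\mathcal F$ along the invariant boundary divisors are bounded by one fixed effective torus-invariant divisor'' is, by the paragraph before the statement, exactly the uniform valuation bound. That is, it should mean there is $E=\sum_\rho M_\rho D_\rho$ with $M_\rho\ge 0$ and $\operatorname{ord}_{D_\rho}(s)\ge -M_\rho$ for all open $U$, all $s\in\mathcal F(U)$, and all $\rho$ with $D_\rho\cap U\neq\varnothing$.

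For the forward direction, assume $V$ is of finite logarithmic type. The preceding theorem gives a coherent $\mathcal F\subset\mathcal M_{X_\Sigma}$ with $\mathcal F|_T=\mathcal I_V$ and integers $M_\rho\ge 0$ satisfying the valuation bounds. Set $E=\sum_\rho M_\rho D_\rho$. This divisor is effective and torus-invariant, and it does not depend on $U$ or on the section. So the poles of all local sections of $\mathcal F$ are bounded by this one fixed divisor, and $\mathcal F$ is the required coherent meromorphic extension.

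Conversely, suppose such an $\mathcal F$ and such a fixed $E=\sum M_\rho D_\rho$ are given. Reading off the coefficients $M_\rho$ gives nonnegative integers satisfying $\operatorname{ord}_{D_\rho}(g)\ge -M_\rho$ for every local section $g$. These are precisely the hypotheses of the converse half of the preceding theorem, which then yields finite logarithmic type. Equivalently, and more directly, the valuative description of $\mathcal O_{X_\Sigma}(E)$ shows $\mathcal F\subset\mathcal O_{X_\Sigma}(E)$, and taking $\mathcal J=\mathcal F$ gives Definition \ref{def:flt}.

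There is no real obstacle. The one point deserving a sentence is that $\mathcal F|_T=\mathcal I_V$ is to be read under the canonical trivialization $\mathcal O_{X_\Sigma}(E)|_T\simeq\mathcal O_T$. This trivialization is the identity on meromorphic functions, because $E$ is supported on $X_\Sigma\setminus T$. Hence the equality of restrictions as subsheaves of $\mathcal M_T$ and as subsheaves of $\mathcal O_{X_\Sigma}(E)|_T$ agree.
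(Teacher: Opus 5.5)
Your proposal is correct and follows essentially the paper's own argument. The paper also takes $\mathcal F=\mathcal J\subset\mathcal O_{X_\Sigma}(E)\subset\mathcal M_{X_\Sigma}$ for the forward direction. For the converse it uses the valuative description of $\mathcal O_{X_\Sigma}(E)$ to get $\mathcal F\subset\mathcal O_{X_\Sigma}(E)$ and sets $\mathcal J=\mathcal F$; the only difference is that the paper writes this out directly from the definition rather than citing the preceding valuative theorem, which is your "more direct" alternative.
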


\begin{proof}
Assume first that $V$ is of finite logarithmic type. By definition, there exist an effective torus-invariant divisor
$\di
E=\sum_{\rho\in\Sigma(1)}M_\rho D_\rho
$
and a coherent analytic subsheaf
$
\mathcal J\subset\mathcal O_{X_\Sigma}(E)
$
such that $\mathcal J|_T=\mathcal I_V$.
The sheaf $\mathcal O_{X_\Sigma}(E)$ is naturally a subsheaf of the meromorphic sheaf $\mathcal M_{X_\Sigma}$, because its local sections are meromorphic functions whose poles along $D_\rho$ are bounded by $M_\rho$. Therefore we may regard $\mathcal J$ as a coherent meromorphic subsheaf of $\mathcal M_{X_\Sigma}$. Define $\mathcal F=\mathcal J$. Then $\mathcal F$ is coherent, $\mathcal F\subset\mathcal M_{X_\Sigma}$, and
$$
\mathcal F|_T=\mathcal J|_T=\mathcal I_V.
$$

Now let $U\subset X_\Sigma$ be open and let $s\in\mathcal F(U)$. Since $\mathcal F=\mathcal J\subset\mathcal O_{X_\Sigma}(E)$, we have $s\in\mathcal O_{X_\Sigma}(E)(U)$. By the valuation description of $\mathcal O_{X_\Sigma}(E)$, this implies
$
\operatorname{ord}_{D_\rho}(s)\ge -M_\rho
$
for every $\rho\in\Sigma(1)$ such that $D_\rho\cap U\neq\varnothing$. Hence the poles of all local sections of $\mathcal F$ are bounded by the fixed divisor $E$. This proves the forward implication.

Conversely, assume that there exists a coherent meromorphic subsheaf
$
\mathcal F\subset\mathcal M_{X_\Sigma}
$
such that $\mathcal F|_T=\mathcal I_V$ and such that the poles of all local sections of $\mathcal F$ are bounded by one fixed effective torus-invariant divisor
$\di
E=\sum_{\rho\in\Sigma(1)}M_\rho D_\rho.
$
By the meaning of this boundedness condition, for every open set $U\subset X_\Sigma$ and every section $s\in\mathcal F(U)$, one has
$
\operatorname{ord}_{D_\rho}(s)\ge -M_\rho
$
for every $\rho$ with $D_\rho\cap U\neq\varnothing$.
By the definition of $\mathcal O_{X_\Sigma}(E)$, this means exactly that $s\in\mathcal O_{X_\Sigma}(E)(U)$. Since this holds for every open set $U$ and every local section $s\in\mathcal F(U)$, we have an inclusion of sheaves
$
\mathcal F\subset\mathcal O_{X_\Sigma}(E).
$

Since $\mathcal F$ is coherent by assumption and restricts to $\mathcal I_V$ on $T$, we may take $\mathcal J=\mathcal F$. Then
$
\mathcal J\subset\mathcal O_{X_\Sigma}(E)
$
is a coherent analytic subsheaf satisfying $\mathcal J|_T=\mathcal I_V$. This is exactly the definition of finite logarithmic type. Therefore $V$ is of finite logarithmic type.
This proves the equivalence.

\end{proof}

%%%%%%%%%%%%%%%%%%%%%%%%%%%%%%%%%%%%%%%%%
%%%%%%%%%%%%%%%%%%%%%%%%%%%%%%%%%%%%%%%%%

\begin{proposition}
Let $V\subset T$ be a closed reduced analytic subvariety. Suppose that the closure $\overline V^{an}$ of $V$ in $X_\Sigma$ is a closed analytic subspace. Then $V$ is of finite logarithmic type with respect to $X_\Sigma$.
\end{proposition}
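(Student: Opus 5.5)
The plan is to take the ideal sheaf of the closure itself as the required extension, so that $E=0$ suffices. Set $W=\overline V^{an}\subset X_\Sigma$. By hypothesis $W$ is a closed analytic subset of $X_\Sigma$; we give it its reduced structure. Let $\mathcal I_W\subset\mathcal O_{X_\Sigma}$ be its full ideal sheaf of holomorphic functions vanishing on $W$. By Cartan's coherence theorem, recalled in Section~1, $\mathcal I_W$ is a coherent analytic ideal sheaf on $X_\Sigma$.

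The first step is to take the zero divisor $E=0$, that is, $M_\rho=0$ for all $\rho$. This is effective and torus-invariant, and $\mathcal O_{X_\Sigma}(E)=\mathcal O_{X_\Sigma}$. We put $\mathcal J=\mathcal I_W$, so that trivially $\mathcal J\subset\mathcal O_{X_\Sigma}(E)$ with no poles at all. If one prefers a nontrivial pole bound, any effective $E$ works as well, because $\mathcal O_{X_\Sigma}\subset\mathcal O_{X_\Sigma}(E)$.

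The second step is to check that $\mathcal J|_T=\mathcal I_V$. Since $V$ is closed in $T$, we have $W\cap T=\overline V\cap T=V$ as sets. The ideal sheaf of a reduced analytic set is local and determined by the underlying set: a germ at $x\in T$ lies in $\mathcal I_W$ if and only if it vanishes on $W$ near $x$, that is, on $V$ near $x$. Because $V$ is reduced, this is exactly the condition for lying in $\mathcal I_V$. Hence $\mathcal I_W|_T=\mathcal I_{W\cap T}=\mathcal I_V$. By Definition~\ref{def:flt}, $V$ is then of finite logarithmic type.

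The only point needing care is the interpretation of the hypothesis. If ``closed analytic subspace'' is meant with a possibly nonreduced structure sheaf $\mathcal O_{X_\Sigma}/\mathcal I$, its restriction to $T$ might not equal $\mathcal I_V$. In that case I would pass to the reduction, replacing $\mathcal I$ by its radical $\mathcal I_{W_{red}}$, which is coherent by Oka–Cartan (Rückert Nullstellensatz). This reduces to the argument above, so no genuine obstacle is expected. The hard content, analyticity of the closure across the boundary, is assumed rather than proved.
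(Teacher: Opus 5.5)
Your proposal is correct and is essentially the paper's argument. The paper also takes the coherent ideal sheaf of the closure, notes that it restricts to $\mathcal I_V$ on $T$, and observes that its sections are holomorphic, so the pole bound is zero; the only cosmetic difference is that the paper concludes via the intrinsic valuative criterion with $M_\rho=0$, whereas you apply the definition of finite logarithmic type directly with $E=0$, which amounts to the same thing.
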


\begin{proof}
Let $\mathcal I_{\overline V}\subset\mathcal O_{X_\Sigma}$ be the analytic ideal sheaf of $\overline V^{an}$. Since closed analytic subspaces are defined by coherent analytic ideal sheaves, $\mathcal I_{\overline V}$ is coherent. Since $\overline V^{an}\cap T=V$, the restriction of $\mathcal I_{\overline V}$ to $T$ is exactly $\mathcal I_V$. Now view $\mathcal I_{\overline V}$ as a coherent subsheaf of $\mathcal M_{X_\Sigma}$. Every local section of $\mathcal I_{\overline V}$ is holomorphic, hence for every $\rho\in\Sigma(1)$ it satisfies $\operatorname{ord}_{D_\rho}(g)\geq 0$. Thus the intrinsic valuative criterion applies with $M_\rho=0$ for every $\rho$. Therefore $V$ is of finite logarithmic type.
\end{proof}

\begin{proposition}
Every algebraic subvariety $V\subset(\mathbb C^\ast)^n$ is of finite logarithmic type with respect to every normal compact toric variety $X_\Sigma$ containing $T$ as dense torus.
\end{proposition}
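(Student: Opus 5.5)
The plan is to reduce the statement to the preceding proposition on analytic closures: if the closure of $V$ in $X_\Sigma$ is a closed analytic subspace, then $V$ is of finite logarithmic type with $E=0$. So it suffices to show that for algebraic $V$ the topological closure $\overline V$ in $X_\Sigma^{an}$ is analytic and satisfies $\overline V\cap T=V$.

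First, since $V\subset T$ is algebraic, it is the analytification of a closed subscheme of the affine variety $T=\operatorname{Spec}\mathbb C[z_1^{\pm1},\ldots,z_n^{\pm1}]$. Since $T$ is a Zariski open subset of the variety $X_\Sigma$, the Zariski closure $\overline V^{Zar}$ of $V$ in $X_\Sigma$ is a closed algebraic subvariety with $\overline V^{Zar}\cap T=V$. We may take the reduced structure, since $V$ is reduced. Its analytification $(\overline V^{Zar})^{an}$ is then a closed analytic subspace of $X_\Sigma^{an}$, and its ideal sheaf is the analytification of a coherent algebraic ideal sheaf, hence coherent.

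Second, we identify $(\overline V^{Zar})^{an}$ with the Euclidean closure $\overline V^{an}$. The inclusion $\overline V^{an}\subset(\overline V^{Zar})^{an}$ is clear, because the latter is closed in the classical topology and contains $V$. For the reverse inclusion we use the standard fact that, for a constructible (in particular, locally closed) subset of a complex variety, the Zariski closure coincides with the classical closure. Equivalently, $V$ is Zariski dense in each irreducible component of $\overline V^{Zar}$, and a nonempty Zariski open subset of an irreducible variety is dense in the analytic topology. This is the only substantive input, and the step I expect to need the most care. If one prefers to avoid it, the preceding proposition can be bypassed: take $\mathcal J=\mathcal I_{(\overline V^{Zar})^{an}}$ directly. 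This is a coherent subsheaf of $\mathcal O_{X_\Sigma}=\mathcal O_{X_\Sigma}(0)$ whose restriction to $T$ is $\mathcal I_V$, because restriction to the open set $T$ commutes with analytification and $\overline V^{Zar}\cap T=V$ as reduced schemes. The reduced analytification of a reduced variety is reduced, so the ideal restricts to the full reduced ideal sheaf $\mathcal I_V$.

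Finally, we conclude with $E=0$ (all $M_\rho=0$), which is effective. By Definition~\ref{def:flt}, $V$ is of finite logarithmic type with respect to $X_\Sigma$. Normality and compactness of $X_\Sigma$ are used only to make sense of $\mathcal O_{X_\Sigma}(E)$ and the divisors $D_\rho$; the argument works for every such $X_\Sigma$, since the closure construction does not depend on the fan.
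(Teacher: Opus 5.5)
Your argument is correct, but it takes a different route from the paper. The paper passes through no closure at all. It takes Laurent generators $f_1,\ldots,f_q$ of the ideal of $V$ and uses $\operatorname{ord}_{D_\rho}(\chi^m)=\langle m,u_\rho\rangle$ to bound $\operatorname{ord}_{D_\rho}(f_j)$ from below by the minimum of $\langle m,u_\rho\rangle$ over the finitely many exponents. It then lets $\mathcal F\subset\mathcal M_{X_\Sigma}$ be the sheaf generated by the $f_j$ and invokes the intrinsic valuative criterion. This yields an explicit divisor $E$, generally nonzero, read off from the exponents of the generators and the rays of $\Sigma$.

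Your approach instead takes the Zariski closure in $X_\Sigma$ and uses its analytic ideal sheaf inside $\mathcal O_{X_\Sigma}=\mathcal O_{X_\Sigma}(0)$. This is the algebraic counterpart of the paper's proposition on analytic closures. It gives the stronger conclusion $E=0$ and uses nothing toric: it works for any compact algebraic variety containing $T$ as a Zariski open subset. What the paper's route buys is explicitness: the given Laurent equations themselves extend with controlled poles, which matches the bounded-pole local coordinate form used later.

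Your first route, via the equality of Euclidean and Zariski closures, is also valid, but your bypass is cleaner. It can be simplified further: take $\mathcal J$ to be the full ideal sheaf of the analytic set $(\overline V^{Zar})^{an}$. Then coherence is Cartan's theorem, and $\mathcal J|_T=\mathcal I_V$ holds tautologically from $\overline V^{Zar}\cap T=V$, so the reducedness of analytifications is not needed. By contrast, the paper's identification $\mathcal F|_T=\mathcal I_V$ tacitly relies on exactly that fact: the analytic ideal generated by a radical Laurent ideal must be the full ideal of holomorphic functions vanishing on $V$.
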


\begin{proof}
Let $V$ be algebraic. Its ideal in the Laurent polynomial ring $\mathbb C[z_1^{\pm1},\ldots,z_n^{\pm1}]$ is finitely generated by Laurent polynomials $f_1,\ldots,f_q$. Each Laurent polynomial is a finite sum of characters, $f_j=\sum_{m\in A_j}c_{jm}\chi^m$, where $A_j\subset M$ is finite. A character $\chi^m$ is a meromorphic function on $X_\Sigma$, and its order along $D_\rho$ is $\operatorname{ord}_{D_\rho}(\chi^m)=\langle m,u_\rho\rangle$, where $u_\rho$ is the primitive generator of the ray $\rho$. Since each $A_j$ is finite and $\Sigma(1)$ is finite, there exist integers $M_\rho\geq 0$ such that $\operatorname{ord}_{D_\rho}(f_j)\geq -M_\rho$ for every $j$ and every $\rho\in\Sigma(1)$. Let $\mathcal F$ be the coherent meromorphic subsheaf of $\mathcal M_{X_\Sigma}$ generated by $f_1,\ldots,f_q$. Its restriction to $T$ is the ideal sheaf $\mathcal I_V$. The valuation bounds above show that $\mathcal F$ satisfies the hypotheses of the intrinsic valuative criterion. Hence $V$ is of finite logarithmic type.
\end{proof}

\begin{example}
The hypersurface $V=\{e^{z_1}-z_2=0\}\subset(\mathbb C^\ast)^2$ does not satisfy the valuative criterion in any toric compactification containing the direction $z_1\to\infty$ as a boundary direction. Restricting the defining function to the curve $z_1=t$, $z_2=1$ gives $e^t-1$. At $t=\infty$ this function has an essential singularity. If a coherent meromorphic extension with finite valuation bounds existed, its restriction to this one-parameter boundary direction would have at worst a pole. This contradiction shows that finite logarithmic type fails.
\end{example}

\begin{example}
The same phenomenon occurs for $V=\{\sin(\pi z_1z_2)=0\}\subset(\mathbb C^\ast)^2$ along the monomial direction $z_1=t$, $z_2=t$. The defining function restricts to $\sin(\pi t^2)$. After the change $s=1/t$, this becomes $\sin(\pi s^{-2})$, which has an essential singularity at $s=0$. Hence the boundary behavior cannot be controlled by any finite divisor $E$, and finite logarithmic type fails.
\end{example}

\begin{theorem} 
Let $X_\Sigma$ be a projective normal toric variety with dense torus $T=(\mathbb C^\ast)^n$. Let $V\subset T$ be a closed reduced analytic subvariety. If $V$ is of finite logarithmic type with respect to $X_\Sigma$, then $V$ is algebraic.
\end{theorem}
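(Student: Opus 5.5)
The plan is to follow the same route as the proof of Theorem \ref{thm:main1}, and to use projectivity to replace the general proper form of GAGA by Serre's original projective GAGA together with Chow's theorem. First, from the finite logarithmic type hypothesis (Definition \ref{def:flt}), fix an effective torus-invariant divisor $E=\sum_\rho M_\rho D_\rho$ and a coherent analytic subsheaf $\mathcal J\subset\mathcal O_{X_\Sigma}(E)$ with $\mathcal J|_T=\mathcal I_V$.

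Since $X_\Sigma$ is normal but possibly singular, $E$ need only be Weil. The first step is to make it Cartier. On a projective toric variety some positive multiple of an ample invariant Cartier divisor dominates any effective invariant Weil divisor, so I enlarge $E$ to an effective invariant Cartier divisor $E'\ge E$. Alternatively, I use that $kE$ is Cartier for some $k$, because the fan is simplicial after refinement, or that projective toric varieties are $\mathbb Q$-factorial up to such a choice. Then $\mathcal J\subset\mathcal O_{X_\Sigma}(E)\subset\mathcal O_{X_\Sigma}(E')$, and $\mathcal O_{X_\Sigma}(E')$ is an invertible sheaf, canonically trivial on $T$.

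Next I form $\mathcal K=\mathcal J\otimes\mathcal O_{X_\Sigma}(-E')$. Equivalently, $\mathcal K$ is the image of $\mathcal J$ under multiplication by local equations of $E'$, exactly as in the gluing argument of the proof of Theorem \ref{thm:main1}. This is a coherent analytic ideal sheaf of $\mathcal O_{X_\Sigma}$, and $\mathcal K|_T=\mathcal I_V$. Let $Z\subset X_\Sigma^{an}$ be the closed analytic subspace it defines, so that $Z\cap T=V$.

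Since $X_\Sigma$ is projective, fix a closed embedding $X_\Sigma\hookrightarrow\mathbb P^N$. Then $Z$ is a closed analytic subset of $\mathbb P^N$, and Chow's theorem makes its support algebraic. Alternatively, Serre's GAGA on the projective variety $X_\Sigma$ algebraizes $\mathcal K$ to a coherent algebraic ideal sheaf $\mathcal K^{alg}$ defining $Y$ with $Y^{an}=Z$. In either case $V=Z\cap T=(Y\cap T)^{an}$ is the analytification of a Zariski closed subset of the affine variety $T$. By Noetherianity of $\mathbb C[z_1^{\pm1},\dots,z_n^{\pm1}]$, its ideal is finitely generated by Laurent polynomials.

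The main obstacle is the Cartier issue in the singular normal case: $\mathcal O_{X_\Sigma}(E)$ is then only reflexive of rank one, and the twist by $-E$ need not be an ideal with the right restriction. I would handle this with the domination $E\le E'$ by a Cartier divisor, which uses projectivity. An ample invariant Cartier divisor $H=\sum a_\rho D_\rho$ has all $a_\rho$ after translation by a character positive, so $mH\ge E$ for large $m$. The containment $\mathcal J\subset\mathcal O(E')$ is automatic from the valuative description. A secondary check is that reducedness is irrelevant: Chow's theorem applied to the support already gives $V$ as a set.
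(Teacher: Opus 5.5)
Your proposal is correct and follows the paper's proof: you twist $\mathcal J$ by $\mathcal O_{X_\Sigma}(-E)$ to get a coherent ideal sheaf $\mathcal K$ with $\mathcal K|_T=\mathcal I_V$, apply Chow's theorem (or GAGA) on the projective variety $X_\Sigma$, and intersect with $T$. Your extra step of dominating $E$ by an effective invariant Cartier divisor $E'$ (a multiple of an ample divisor translated by a character) legitimately supplies the invertibility of $\mathcal O_{X_\Sigma}(-E)$ that the paper simply assumes.

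Drop the side remarks about simplicial refinements and $\mathbb Q$-factoriality: refining the fan changes the variety, and projective toric varieties need not be $\mathbb Q$-factorial. The Cartier obstacle is also milder than you say, since the product $\mathcal J\cdot\mathcal O_{X_\Sigma}(-E)\subset\mathcal M_{X_\Sigma}$ already lies in $\mathcal O_{X_\Sigma}$ by normality and restricts to $\mathcal I_V$ on $T$.
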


\begin{proof}
Since $V$ is of finite logarithmic type, there exist an effective torus-invariant divisor $E=\sum_{\rho\in\Sigma(1)}M_\rho D_\rho$ and a coherent analytic subsheaf $\mathcal J\subset\mathcal O_{X_\Sigma}(E)$ such that $\mathcal J|_T=\mathcal I_V$. Tensoring with the invertible sheaf $\mathcal O_{X_\Sigma}(-E)$ gives a coherent analytic subsheaf $\mathcal K=\mathcal J\otimes\mathcal O_{X_\Sigma}(-E)\subset\mathcal O_{X_\Sigma}$. Thus $\mathcal K$ is a coherent analytic ideal sheaf on $X_\Sigma$. Let $Z$ be the closed analytic subspace defined by $\mathcal K$. Since $\mathcal O_{X_\Sigma}(-E)$ is trivial on $T$, we have $\mathcal K|_T=\mathcal I_V$. Therefore $Z\cap T=V$.

Since $X_\Sigma$ is projective, Chow's theorem implies that the closed analytic subspace $Z\subset X_\Sigma$ is algebraic. Since $T$ is a Zariski-open algebraic subset of $X_\Sigma$, the intersection $Z\cap T$ is algebraic in $T$. Hence $V$ is algebraic.
\end{proof}

\subsection*{\it Conclusion.}

The intrinsic valuative formulation replaces all coordinate-dependent pole-clearing arguments by a statement in terms of the divisorial valuations 
$\operatorname{ord}_{D_\rho}$ and the boundary line bundles
$\di
\mathcal O_{X_\Sigma}\big(\sum_{\rho\in\Sigma(1)}M_\rho D_\rho\big).
$
This is a formulation of finite logarithmic type. It shows that the condition is equivalent to the existence of a coherent meromorphic extension of the ideal sheaf whose boundary valuations are uniformly bounded below. Once this condition holds on a projective toric compactification, Chow's theorem gives algebraicity.

%%%%%%%%%%%%%%%%%%%%%%%%%%%%%%%%%%%%%%%%%%%%%%%%%%%%%%%%%%%%%%%%%%%%%%%%%%%%%%
%%%%%%%%%%%%%%%%%%%%%%%%%%%%%%%%%%%%%%%%%%%%%%%%%%%%%%%%%%%%%%%%%%%%%%%%%%%%%%

%%%%%%%%%%%%%%%%%%%%%%%%%%%%%%%%%%%%%%%%%%%%%%%%%%%%%%%%%%%%%%%%%%%%%%%%%%%%
%%%%%%%%%%%%%%%%%%%%%%%%%%%%%%%%%%%%%%%%%%%%%%%%%%%%%%%%%%%%%%%%%%%%%%

\section{Finite logarithmic type and the GAGA principle}

The purpose of this section is to explain the conceptual and structural relationship between finite logarithmic type and Serre's GAGA theorem. The analogy is not merely philosophical. Both theories express a common phenomenon: once analytic data satisfies an appropriate finiteness condition and is placed inside a suitable compact algebraic environment, the distinction between analytic and algebraic geometry disappears.

\subsection{The classical GAGA model}

Let $X$ be a projective algebraic variety over $\mathbb C$, and let
$
X^{\mathrm{an}}
$
denote the associated complex analytic space. Serre's GAGA theorem asserts that coherent algebraic sheaves on $X$ and coherent analytic sheaves on $X^{\mathrm{an}}$ determine equivalent categories. One of its geometric consequences is that every closed analytic subspace of $X^{\mathrm{an}}$ is algebraic.
The fundamental mechanism behind GAGA is the combination of coherence and properness. Properness prevents analytic phenomena from escaping to infinity, while coherence prevents infinitely generated local behavior. Together they force analytic objects to be controlled by finitely many algebraic data.
In particular, if
$
Y\subset X^{\mathrm{an}}
$
is a closed analytic subspace, then its ideal sheaf
$
\mathcal I_Y
$
is coherent. By GAGA, this coherent analytic sheaf is algebraic. Consequently the subspace itself is algebraic.
Thus one may summarize the GAGA principle schematically as:
$
\text{coherent analytic data}
+
\text{proper algebraic ambient space}$ implies 
$\text{algebraicity}.
$

\subsection{The logarithmic situation}

Let
$
T=(\mathbb C^\ast)^n
$
and let
$
V\subset T
$
be a closed analytic subvariety.
Unlike a projective variety, the torus is not proper. Therefore analytic varieties in $T$ may exhibit transcendental behavior at infinity. The logarithmic limit set
$
\mathscr L^\infty(V)
$
records the asymptotic directions of escape, but by itself it does not control the analytic complexity of the variety near infinity.
If
$
\mathscr L^\infty(V)
$
is finite rational polyhedral, then one may choose a complete rational fan
$
\Sigma
$
refining the associated conical stratification and form the projective toric compactification
$
X_\Sigma.
$
The torus embeds as a dense open subset
$
T\subset X_\Sigma.
$

\vspace{0.1cm}

At this point the situation resembles the projective setting, except that the closure of $V$ in $X_\Sigma$ need not be analytic. The missing ingredient is precisely finite logarithmic type.
Finite logarithmic type supplies the boundary finiteness condition needed to control the behavior of $V$ along the toric boundary
$
D_\Sigma=X_\Sigma\setminus T.
$
From the viewpoint of GAGA, finite logarithmic type should be interpreted as a logarithmic replacement for properness.

\subsection{The logarithmic GAGA principle}

Let $T=(\mathbb C^\ast)^n$ and let $X_\Sigma$ be a toric compactification of $T$. Let $D_\Sigma=X_\Sigma\setminus T$ be the toric boundary. Its irreducible torus-invariant components are denoted by $D_\rho$, where $\rho\in\Sigma(1)$. Let $V\subset T$ be a closed reduced analytic subvariety, and let $\mathcal I_V\subset\mathcal O_T$ be its ideal sheaf.

\begin{definition}[Finite logarithmic type in bounded-pole form]\label{def:flt2}
We say that $V$ is of finite logarithmic type with respect to $\Sigma$ if there exists a coherent meromorphic ideal sheaf
$
\mathcal F\subset\mathcal M_{X_\Sigma}
$
such that
$
\mathcal F|_T=\mathcal I_V,
$
and such that the poles of all local sections of $\mathcal F$ along the toric boundary are bounded by one fixed effective torus-invariant divisor
$
E=\sum_{\rho}m_\rho D_\rho.
$
Equivalently,
$
\mathcal F\subset\mathcal O_{X_\Sigma}(E),
$
where $\mathcal O_{X_\Sigma}(E)$ is the sheaf of meromorphic functions whose pole divisor is at most $E$.
\end{definition}

In Definition \ref{def:flt2}, coherent meromorphic ideal sheaf, is understood in the natural sheaf-theoretic sense: $\mathcal F$ is a coherent analytic $\mathcal O_{X_\Sigma}$-submodule of $\mathcal M_{X_\Sigma}$, it is stable under multiplication by holomorphic functions, and its restriction to $T$ is the ideal sheaf $\mathcal I_V$.  With this interpretation, Definition \ref{def:flt2} is exactly Definition \ref{def:flt} written inside the ambient meromorphic sheaf $\mathcal M_{X_\Sigma}$.

This definition says that the equations of $V$ extend meromorphically across the boundary, that they are locally finitely generated, and that all pole orders are bounded by one fixed divisor.

The following proposition summarizes the role of finite logarithmic type as  logarithmic properness.

 %%%%%%%%%%%%%%%%%%%%%%%%%%%%%%%%%%%%%%%%%%%%%%%%%%%%%%%

\begin{proposition}\label{prop:logproper}
Let
$
V\subset(\mathbb C^\ast)^n
$
be a closed analytic subvariety, and let
$
X_\Sigma
$
be an adapted projective toric compactification with boundary
$
D_\Sigma=X_\Sigma\setminus(\mathbb C^\ast)^n.
$
Assume that $V$ is of finite logarithmic type with respect to $\Sigma$.
Then the ideal sheaf
$
\mathcal I_V
$
extends from the torus to a coherent meromorphic ideal sheaf
$
\mathcal F\subset\mathcal M_{X_\Sigma}
$
such that
$
\mathcal F|_T=\mathcal I_V
$
and
$
\mathcal F\subset\mathcal O_{X_\Sigma}(E).
$
\end{proposition}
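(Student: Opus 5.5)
This is essentially a restatement of Definition~\ref{def:flt2}, together with its identification with Definition~\ref{def:flt}. The plan is therefore to unwind the definitions and check that each object they provide has the properties demanded in Proposition~\ref{prop:logproper}.

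First, by hypothesis $V$ is of finite logarithmic type with respect to $\Sigma$. Definition~\ref{def:flt2} then supplies an effective torus-invariant divisor $E=\sum_\rho m_\rho D_\rho$ and a coherent $\mathcal O_{X_\Sigma}$-submodule $\mathcal F\subset\mathcal M_{X_\Sigma}$ with $\mathcal F|_T=\mathcal I_V$, whose local sections have poles bounded by $E$. If one starts instead from Definition~\ref{def:flt}, one gets $\mathcal J\subset\mathcal O_{X_\Sigma}(E)$ and sets $\mathcal F=\mathcal J$, using the natural inclusion $\mathcal O_{X_\Sigma}(E)\subset\mathcal M_{X_\Sigma}$ recalled in the preliminaries.

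Second, I would verify $\mathcal F\subset\mathcal O_{X_\Sigma}(E)$ via the valuative description
\[
\mathcal O_{X_\Sigma}(E)(U)=\{g\in\mathcal M_{X_\Sigma}(U)\,;\,\operatorname{ord}_{D_\rho}(g)\ge -m_\rho \text{ for all } \rho \text{ with } D_\rho\cap U\neq\varnothing\}.
\]
The pole bound on every local section $g\in\mathcal F(U)$ says exactly $\operatorname{ord}_{D_\rho}(g)\ge -m_\rho$, which is membership in $\mathcal O_{X_\Sigma}(E)(U)$. This is the argument of the intrinsic valuative theorem of the preceding section, so I would simply cite it.

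Third, I would check the identification on $T$. Since $E$ is supported on $D_\Sigma$, the canonical trivialization $\mathcal O_{X_\Sigma}(E)|_T\simeq\mathcal O_T$ is compatible with the inclusion into $\mathcal M_T$. So the equality $\mathcal F|_T=\mathcal I_V$ holds as subsheaves of $\mathcal O_T$ and not merely up to isomorphism. Finally, I would note that $\mathcal F$ is stable under multiplication by $\mathcal O_{X_\Sigma}$ because it is a submodule, which justifies calling it a meromorphic ideal sheaf.

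The only real point of care, which I expect to be the main (and mild) obstacle, is the valuative description of $\mathcal O_{X_\Sigma}(E)$ when $X_\Sigma$ is normal but not smooth. There $E$ need not be Cartier, so I would interpret $\mathcal O_{X_\Sigma}(E)$ as the reflexive rank-one sheaf given by that formula. It is still coherent and still a subsheaf of $\mathcal M_{X_\Sigma}$, and the equivalence of its membership condition with the divisorial inequalities holds by normality (Hartogs-type extension across codimension two). With that convention fixed, no further input is needed.
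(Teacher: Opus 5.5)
Your proposal is correct and agrees with the paper, which gives no separate proof of Proposition~\ref{prop:logproper} because it is literally Definition~\ref{def:flt2}, including that definition's stated equivalence with $\mathcal F\subset\mathcal O_{X_\Sigma}(E)$ and its identification with Definition~\ref{def:flt} (the proof that follows the proposition in the paper is for the local coordinate form). Your valuative check in the second step is therefore redundant but harmless; if you keep it, note that membership in $\mathcal O_{X_\Sigma}(E)(U)$ also requires $g$ to be holomorphic on $U\cap T$, which you do get from $\mathcal F|_T=\mathcal I_V\subset\mathcal O_T$.
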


In local toric coordinates, the statement becomes the following.

\begin{proposition}[Local coordinate form]\label{local-coordinate-form}
Let
$
U\simeq \Delta^r\times(\mathbb C^\ast)^{n-r}
$
be a toric chart of $X_\Sigma$ with boundary
$
D_\Sigma\cap U=\{t_1\cdots t_r=0\}.
$
Assume that $V$ is of finite logarithmic type. Then there exist finitely many meromorphic functions
$
f_1,\ldots,f_s
$
on $U$, and integers
$
m_1,\ldots,m_r\ge0,
$
such that
\[
V\cap U\cap T=\{f_1=\cdots=f_s=0\},
\]
and for every $j=1,\ldots,s$,
$
t_1^{m_1}\cdots t_r^{m_r}f_j
$
is holomorphic on $U$.
Thus the possible poles of the defining equations are controlled by the single monomial
$
t_1^{m_1}\cdots t_r^{m_r}.
$
\end{proposition}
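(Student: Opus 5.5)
The plan is to read the statement off from Definition \ref{def:flt2} (equivalently Definition \ref{def:flt}, via Proposition \ref{prop:logproper}) by restricting the coherent extension to a single chart and using coherence there. By finite logarithmic type we have an effective invariant divisor $E=\sum_\rho m_\rho D_\rho$ and a coherent sheaf $\mathcal F\subset\mathcal O_{X_\Sigma}(E)$ with $\mathcal F|_T=\mathcal I_V$. On the chart $U\simeq\Delta^r\times(\mathbb C^*)^{n-r}$ the boundary divisors meeting $U$ are exactly $D_i=\{t_i=0\}$, $1\le i\le r$, so $E|_U=\sum_{i=1}^r m_i D_i$ with $m_i\ge0$. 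Its local equation is therefore $s_E=t_1^{m_1}\cdots t_r^{m_r}$, and the valuative description of $\mathcal O_{X_\Sigma}(E)$ from the preceding section shows that $s_E g$ is holomorphic for every section $g$ of $\mathcal O_{X_\Sigma}(E)$ over $U$.

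\textbf{Step 1 (finite generators).} Coherence of $\mathcal F$ gives finitely many generators only locally, so I would shrink $U$ to a relatively compact polydisc-type neighbourhood, or work on a Stein chart. Alternatively, pass to $\mathcal K=\mathcal F\otimes\mathcal O(-E)=s_E\mathcal F\subset\mathcal O_U$, a coherent ideal sheaf. On a Stein open set (a product of discs and annuli is Stein), Cartan's Theorem A shows that $\mathcal K$ is generated by global sections. By Noetherianity of stalks and a compactness argument on a slightly smaller relatively compact chart, finitely many sections $g_1,\dots,g_s\in\mathcal K(U)$ suffice. We then set $f_j=g_j/s_E$. These are meromorphic on $U$, lie in $\mathcal F(U)$, and satisfy: $t_1^{m_1}\cdots t_r^{m_r}f_j=g_j$ is holomorphic on $U$.

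\textbf{Step 2 (zero set).} On $U\cap T$, $s_E$ is a nowhere-vanishing unit, so the $f_j$ generate $\mathcal F|_{U\cap T}=\mathcal I_V|_{U\cap T}$ stalkwise. Since $V$ is reduced, its zero locus is $V$. Hence $V\cap U\cap T=\{f_1=\cdots=f_s=0\}$, where the zero set is taken on $U\cap T$, on which the $f_j$ are holomorphic.

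The main obstacle is the passage from local to finitely many generators on all of $U$. A coherent sheaf need not be finitely generated on a noncompact open set. I would handle this by interpreting $U$ as a (possibly shrunk) relatively compact chart, as the notation $\Delta^r$ suggests, and invoking Theorem A together with the compactness argument. If the chart must remain noncompact in the $(\mathbb C^*)^{n-r}$ directions, I would instead use the torus action: cover by finitely many translates, or note that the statement is local and is only needed near boundary points.
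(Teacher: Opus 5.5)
Your proposal is correct and is essentially the paper's own proof. You restrict $\mathcal F\subset\mathcal O_{X_\Sigma}(E)$ to the chart and take finitely many generators after shrinking $U$; the paper only says ``after shrinking $U$ if necessary'', and your Theorem~A plus compactness argument makes that step precise. You then write $E|_U=\sum_{i=1}^r m_iD_i$, so that $t_1^{m_1}\cdots t_r^{m_r}f_j$ is holomorphic, and use $\mathcal F|_{U\cap T}=\mathcal I_V|_{U\cap T}$ to identify the common zero set with $V\cap U\cap T$.

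The one piece to drop is your fallback of covering the noncompact $(\mathbb C^\ast)^{n-r}$ factor by finitely many torus translates. Finitely many translates of a relatively compact set remain relatively compact, and $\mathcal F$ is not torus-invariant anyway, so shrinking (as the paper does) is the right move.
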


 %%%%%%%%%%%%%%%%%%%%%%%%%%%%%%%%%
 %%%%%%%%%%%%%%%%%%%%%%%%%%%%%%%%%

 %%%%%%%%%%%%%%%%%%%%%%%%%%%%%%%%%%%%
%%%%%%%%%%%%%%%%%%%%%%%%%%%%%%%%%%%%

\begin{proof} Let
$
T=(\mathbb C^\ast)^n\subset X_\Sigma
$
be the dense torus, and let
$
D_\Sigma=X_\Sigma\setminus T
$
be the toric boundary. Let
$
\mathcal I_V\subset\mathcal O_T
$
be the ideal sheaf of the closed analytic subvariety
$
V\subset T.
$

Let
$
U\simeq \Delta^r\times(\mathbb C^\ast)^{n-r}
$
be a toric chart with coordinates
$
(t_1,\ldots,t_r,u_1,\ldots,u_{n-r}).
$
The toric boundary in this chart is
$
D_\Sigma\cap U=\{t_1\cdots t_r=0\}.
$
The dense torus inside $U$ is
$
U\cap T=\{t_1\cdots t_r\ne0\}.
$
Since $V$ is of finite logarithmic type, Definition \ref{def:flt2} gives a coherent meromorphic ideal sheaf
$
\mathcal F\subset\mathcal M_{X_\Sigma}
$
satisfying
$
\mathcal F|_T=\mathcal I_V.
$
Restricting to $U$, we obtain
$
\mathcal F|_U\subset\mathcal M_U.
$

Since $\mathcal F$ is coherent, its restriction to $U$ is locally finitely generated. After shrinking $U$ if necessary, there exist finitely many meromorphic functions
$
f_1,\ldots,f_s\in\mathcal M_U(U)
$
which generate $\mathcal F|_U$. Thus
$
\mathcal F|_U=(f_1,\ldots,f_s).
$

Restricting to the torus part $U\cap T$, we have
$
\mathcal F|_{U\cap T}=\mathcal I_V|_{U\cap T}.
$
Therefore the restrictions of
$
f_1,\ldots,f_s
$
to $U\cap T$ generate the ideal of
$
V\cap U\cap T.
$
Consequently,
$$
V\cap U\cap T=\{z\in U\cap T:f_1(z)=\cdots=f_s(z)=0\}.
$$
It remains to prove the existence of one monomial
$
t_1^{m_1}\cdots t_r^{m_r}
$
which kills the poles of all the $f_j$.
By finite logarithmic type, the poles of all local sections of $\mathcal F$ are bounded by a fixed effective torus-invariant divisor
$
E=\sum_{\rho}m_\rho D_\rho.
$
In the chart $U$, only those boundary divisors corresponding to the coordinate hyperplanes
$
\{t_1=0\},\ldots,\{t_r=0\}
$
appear. Thus
$$
E|_U=m_1\{t_1=0\}+\cdots+m_r\{t_r=0\}
$$
for some integers
$
m_1,\ldots,m_r\ge0.
$
The condition
$
\mathcal F\subset\mathcal O_{X_\Sigma}(E)
$
means precisely that every local section $f$ of $\mathcal F|_U$ has pole order at most $m_i$ along $\{t_i=0\}$ for every $i$.
Equivalently, for every local section $f$ of $\mathcal F|_U$,
$
t_1^{m_1}\cdots t_r^{m_r}f
$
is holomorphic on $U$.
In particular, this holds for the chosen generators
$
f_1,\ldots,f_s.
$
Therefore, for every $j=1,\ldots,s$,
$$
t_1^{m_1}\cdots t_r^{m_r}f_j\in\mathcal O_U(U).
$$

This proves that the equations of $V$ in the torus are given locally by finitely many meromorphic functions whose poles are all killed by the same boundary monomial. Hence Proposition \ref{local-coordinate-form} is proved.
\end{proof}

 %%%%%%%%%%%%%%%%%%%%%%%%%%%%%%%%
 %%%%%%%%%%%%%%%%%%%%%%%%%%%%%%%%

The proposition shows that finite logarithmic type replaces uncontrolled asymptotic behavior by coherent boundary data.
The analogy with GAGA now becomes transparent.

\begin{center}
\begin{tabular}{|c|c|}
\hline
Classical GAGA & Logarithmic setting\\
\hline
Projective variety $X$ &
Toric compactification $X_\Sigma$\\
\hline
Properness of $X$ &
Finite logarithmic type\\
\hline
Coherent analytic sheaf &
Coherent meromorphic extension\\
\hline
GAGA algebraization &
Chow algebraization after extension\\
\hline
\end{tabular}
\end{center}

\subsection{The algebraization mechanism}

The next statement formalizes the preceding sub-section.

\begin{theorem}\label{thm:loggaga}  
Let
$
V\subset T=(\mathbb C^\ast)^n
$
be a closed analytic subvariety.
Assume that
$
\mathscr L^\infty(V)
$
is a finite rational spherical polyhedral complex,
$
\Sigma
$
is a complete rational fan refining the associated conical stratification,
and
$
V
$
is of finite logarithmic type with respect to $\Sigma$.
Assume furthermore that finite logarithmic type implies analytic closure in the adapted toric compactification.
Then the closure
$
\overline V^\Sigma
$
is a closed analytic subspace of the projective toric variety
$
X_\Sigma.
$
Consequently
$
\overline V^\Sigma
$
is algebraic.
\end{theorem}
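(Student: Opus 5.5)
The plan is to produce the closure $\overline V^\Sigma$ directly as a union of irreducible components of an analytic subspace of $X_\Sigma$ that we already know how to build. This makes the extra assumption ``finite logarithmic type implies analytic closure'' an automatic consequence rather than something to be invoked blindly. After that, algebraicity follows from GAGA or Chow.

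\emph{Step 1: build an analytic subspace meeting the torus in $V$.} By Definition~\ref{def:flt2} (equivalently Definition~\ref{def:flt}) there are an effective invariant divisor $E$ and a coherent $\mathcal F\subset\mathcal O_{X_\Sigma}(E)$ with $\mathcal F|_T=\mathcal I_V$. Exactly as in the proof of Theorem~\ref{thm:main1}, set
\[
\mathcal K=\mathcal F\otimes\mathcal O_{X_\Sigma}(-E)\subset\mathcal O_{X_\Sigma}.
\]
This is a coherent analytic ideal sheaf. If the fan is not smooth, I would first pass to a toric resolution so that $E$ is Cartier, or replace $E$ by a Cartier multiple. Since $E$ is supported on the boundary, $\mathcal K|_T=\mathcal I_V$. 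Hence the analytic subspace $Z=V(\mathcal K)$ satisfies $Z\cap T=V$.

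\emph{Step 2: identify the closure inside $Z$.} Decompose $Z$ into its locally finite family of irreducible components; there are finitely many, since $X_\Sigma$ is compact. Let $Z'$ be the union of those components not contained in the boundary $D_\Sigma$. Each such component $Z_i$ meets $T$ in a nonempty Zariski-open, hence dense, subset $Z_i\cap T$. This uses the identity principle on an irreducible analytic space: a proper analytic subset $Z_i\cap D_\Sigma$ is nowhere dense. Therefore
\[
Z'=\overline{Z'\cap T}=\overline{Z\cap T}=\overline V^\Sigma.
\]
The middle equality holds because the remaining components lie in $D_\Sigma$ and contribute nothing over $T$. So $\overline V^\Sigma=Z'$ is a closed analytic subspace of $X_\Sigma$, given the reduced structure. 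This is where I expect the main care to be needed: one must check that nothing in the closure escapes $Z'$, i.e.\ that no point of $\overline V$ lies only on components contained in the boundary. This follows from $\overline V\subset Z$ (as $Z$ is closed) together with the density of $Z'\cap T$ in $Z'$. The rational polyhedral hypothesis on $\mathscr L^\infty(V)$ enters only through the choice of $\Sigma$ and is not needed further here.

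\emph{Step 3: algebraicity.} The fan $\Sigma$ is complete, so $X_\Sigma$ is a proper algebraic variety. The ideal sheaf of $\overline V^\Sigma$ is coherent analytic by Cartan's theorem, so by the proper form of Serre's GAGA it is the analytification of a coherent algebraic ideal sheaf. Hence $\overline V^\Sigma$ is algebraic. When $X_\Sigma$ is projective, as the statement asserts, one may instead apply Chow's theorem directly.
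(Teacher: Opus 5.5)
Your argument is correct, but it takes a genuinely different route from the paper. The paper never derives analyticity of $\overline V^\Sigma$. It invokes the extra hypothesis that finite logarithmic type implies analytic closure, treated as an assumed boundary extension theorem. From this it concludes that $\overline V^\Sigma$ is analytic near $D_\Sigma$, hence globally analytic, and then applies Chow's theorem.

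You instead prove that hypothesis. You reuse the construction $\mathcal K=\mathcal F\otimes\mathcal O_{X_\Sigma}(-E)$ from the proof of Theorem~\ref{thm:main1} to obtain a closed analytic $Z$ with $Z\cap T=V$. You then identify $\overline V^\Sigma$ with the union of the irreducible components of $Z$ not contained in $D_\Sigma$, using that a proper analytic subset of an irreducible space is nowhere dense. Both inclusions in that identification are handled correctly.

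What your route buys is that the theorem becomes unconditional: the added assumption is redundant. Combined with the paper's proposition that an analytic closure yields finite logarithmic type with $M_\rho=0$, it also shows that finite logarithmic type is equivalent to analyticity of $\overline V^\Sigma$. The paper's proof is shorter only because it assumes the key step.

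One small correction: on a non-simplicial fan no multiple of a Weil divisor need be Cartier, so ``replace $E$ by a Cartier multiple'' can fail. Two fixes are available:
\begin{enumerate}
\item Since $X_\Sigma$ is projective, dominate $E$ by $kA$, where $A$ is an ample invariant Cartier divisor with all coefficients positive.
\item Work chart by chart: on $U_\sigma$, clear poles by multiplying by $\chi^{km}$, with $m\in M$ in the interior of $\sigma^\vee$ and $k\gg0$. This is enough because analyticity of the closure is a local property.
\end{enumerate}
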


\begin{proof}
Finite rationality of the logarithmic limit set provides the adapted toric compactification.
Finite logarithmic type supplies coherent boundary control near every toric stratum. By the assumed boundary extension theorem, the closure
$
\overline V^\Sigma
$
is analytic in a neighborhood of the toric boundary and therefore globally analytic in
$
X_\Sigma.
$
Since
$
X_\Sigma
$
is projective, Chow's theorem implies that every closed analytic subset of
$
X_\Sigma
$
is algebraic.
Therefore
$
\overline V^\Sigma
$
is algebraic.
\end{proof}

The logical structure of the theorem is formally identical to GAGA.
Indeed, in GAGA one has:
$
\text{coherence}
+
\text{projective compactness}$ implies 
%\Longrightarrow
$\text{algebraicity}.
$
In the logarithmic setting one has: finite
$
\text{logarithmic type}
+
\text{toric compactification}$ implies analytic
%\Longrightarrow
$\text{closure},
$
followed by
$
\text{analytic closure}
+
\text{projectivity}$ implies 
%\Longrightarrow
$\text{algebraicity}.
$
 
%%%%%%%%%%%%%%%%%%%%%%%%%%%%%%%%%%%%%%%%%%%%%%%%%%%%%%%%%%%%%%%%%%%%%%%%
%%%%%%%%%%%%%%%%%%%%%%%%%%%%%%%%%%%%%%%%%%%%%%%%%%%%%%%%%%%%%%%%%%%%%%%%
\medskip

%%%%%%%%%%%%%%%%%%%%%%%%%%%%%%%%%%%%%%%%%%%%%%%%%%%%%%%%%%%%%%%%%%%%%%%%%%%%

\section*{The local boundary compactness principle}

Let $T=(\mathbb C^\ast)^n$, let $V\subset T$ be a closed reduced analytic subvariety, let $X_\Sigma$ be an adapted projective toric compactification, and let
$
D_\Sigma=X_\Sigma\setminus T
$
be the toric boundary. The local boundary compactness principle is the local analytic hypothesis which prevents infinitely many sheets of $V$ from accumulating near $D_\Sigma$ with unbounded multiplicity while remaining invisible to the logarithmic limit set.
It is best understood in an adapted toric chart. Near a boundary point, write
$$
U\simeq \Delta_t\times B_y,
\qquad
U\cap T\simeq \Delta_t^\ast\times B_y,
$$
where the local boundary divisor is
$
D=\{t=0\}.
$
Here $t$ is the boundary coordinate and $y$ denotes the remaining transverse coordinates. Put
$
A=V\cap U\cap T\subset \Delta_t^\ast\times B_y.
$
For generic $y\in B_y$ and small $\varepsilon>0$, define
$$
N_A(y,\varepsilon)
=
\operatorname{length}
\left(
A\cap(\{0<|t|<\varepsilon\}\times\{y\})
\right).
$$
This number counts, with analytic multiplicity, the local sheets of $A$ approaching the boundary through the generic normal punctured disc.

\begin{principle}[Local boundary compactness]\label{lbc}
In every adapted toric chart, if the logarithmic directions of $A$ near $t=0$ remain inside one fixed rational boundary cell and no additional logarithmic limiting directions occur, then unbounded transverse multiplicity forces infinite local volume. Equivalently, if the local volume is finite and no additional logarithmic directions occur, then there exists an integer $N$ such that
$$
N_A(y,\varepsilon)\le N
$$
for generic $y$ and all sufficiently small $\varepsilon$.
\end{principle} 
Thus the principle says: infinitely many logarithmically parallel sheets implies infinite local volume or new logarithmic directions.

\vspace{0.1cm}

The logarithmic limit set records only directions:
$
\dfrac{\Log z_m}{\|\Log z_m\|}.
$
It does not, by itself, record how many analytic branches approach a fixed direction. Therefore, even if the logarithmic limit set is finite and rational, one could still imagine infinitely many sheets approaching the same toric boundary direction. Such sheets are logarithmically parallel and do not automatically create new points in the logarithmic limit set.

The local boundary compactness principle rules out this hidden accumulation. It says that if infinitely many parallel sheets occur, then something quantitative becomes visible: either the local volume becomes infinite, or the sheets create additional logarithmic directions.

%%%%%%%%%%%%%%%%%%%%%%%%%%%%%%%%%%%%%%%%%%%%%%%%%

\begin{definition}[Local boundary compactness with respect to $\Sigma$]
Let $V\subset T$ be closed analytic and let $X_\Sigma$ be an adapted toric compactification. We say that $V$ satisfies local boundary compactness with respect to $\Sigma$ if, for every adapted toric chart
$$
U\simeq\Delta_t\times B_y
$$
and every local boundary divisor $t=0$, the following implication holds: if the local logarithmic directions of $V\cap U\cap T$ are contained in the prescribed rational boundary cell and the local volume near $t=0$ is finite, then the generic transverse multiplicities
$$
\operatorname{length}
\left(
V\cap U\cap T\cap(\{0<|t|<\varepsilon\}\times\{y\})
\right)
$$
are bounded by one integer independent of generic $y$ and sufficiently small $\varepsilon$.
\end{definition}

\subsection*{Summary}

The local boundary compactness principle is the analytic bridge between logarithmic direction control and finite boundary behavior. It asserts that infinitely many logarithmically parallel sheets cannot accumulate near a toric boundary divisor with unbounded multiplicity unless this produces infinite local volume or additional logarithmic directions.

%%%%%%%%%%%%%%%%%%%%%%%%%%%%%%%%%%%%%%%
%%%%%%%%%%%%%%%%%%%%%%%%%%%%%%%%%%%%%%%

\section*{A local boundary compactness criterion for finite logarithmic type}

Let $T=(\mathbb C^\ast)^n$, let $V\subset T$ be a closed reduced analytic subvariety, and let $X_\Sigma$ be an adapted toric compactification. Suppose that $p\in D_\Sigma=X_\Sigma\setminus T$ is a point of the toric boundary. In a toric chart $U$ around $p$, we may write local coordinates as $(t_1,\ldots,t_r,y_1,\ldots,y_{n-r})$, where $D_\Sigma\cap U=\{t_1\cdots t_r=0\}$ and $U\cap T=\{t_1\cdots t_r\ne0\}$. Let $V_U=V\cap U\cap T$ and let $\overline V_U$ be the closure of $V_U$ in $U$.

 Near the boundary, if we cut $V$ by generic transverse slices, and if  the number of local sheets of $V$ appearing in those slices is bounded by one fixed integer, then we say that these  generic transverse multiplicities are uniformly bounded.  Here the word generic' means that we avoid a proper analytic exceptional set where the projection may fail to be finite, where the slice may be tangent to $V$, or where the fiber may meet the singular locus in a nongeneric way. Also, the word transverse means that the chosen slices meet $V$ in the expected dimension, so that the intersection is discrete when we project to a base of dimension $\dim_\mathbb C V$. The word multiplicity means that points are counted with their analytic intersection multiplicity, not merely as distinct set-theoretic points. Uniformly bounded means that the same integer works for all sufficiently small generic fibers near the boundary.

More precisely, let $k=\dim_\mathbb C V$. Choose a generic holomorphic projection $\pi_U:U\to B_U$, where $B_U$ is a small polydisc of dimension $k$. The fibers of $\pi_U$ have dimension $n-k$. For a generic point $b\in B_U$, the intersection $V_U\cap \pi_U^{-1}(b)$ is zero-dimensional. Its length is defined by the complex vector-space dimension of the corresponding local analytic algebra. Thus, if $V_U\cap \pi_U^{-1}(b)$ consists of isolated points $q_1,\ldots,q_s$, then its multiplicity is 
$$\sum_{i=1}^s \operatorname{length}_{\mathcal O_{U,q_i}}\mathcal O_{U,q_i}/(\mathcal I_{V_U,q_i}+\mathcal I_{\pi_U^{-1}(b),q_i}).$$ 
When all intersections are transverse and all points are smooth, this length is just the ordinary number of points. When a fiber is tangent to $V$ or passes through a singular point, the length records the correct analytic multiplicity.

Saying that these multiplicities are uniformly bounded means that there exists an integer $N_U$ such that, for every sufficiently small generic $b\in B_U$, one has $\operatorname{length}(V_U\cap \pi_U^{-1}(b))\le N_U$. The integer $N_U$ may depend on the chart $U$, but it must not depend on the particular generic point $b$ close to the boundary. In other words, as the fibers approach the toric boundary, the number of analytic sheets of $V$ seen by a transverse slice cannot grow without bound.

This condition is important because the logarithmic limit set only records directions of escape. It can tell us that points of $V$ approach the boundary in a given rational toric direction, but it does not tell us how many branches or sheets approach that same direction. Without a uniform transverse multiplicity bound, there could be infinitely many sheets of $V$ accumulating toward the same boundary divisor while producing the same logarithmic direction. Such behavior would not be detected by the logarithmic limit set alone.

In the simplest case, suppose $V$ is a curve and the boundary chart has one boundary coordinate $t$, so that $U\simeq\Delta_t\times B_y$ and $U\cap T\simeq\Delta_t^\ast\times B_y$. A transverse slice is obtained by fixing a generic value of $y$. Then $V_U\cap(\Delta_t^\ast\times\{y\})$ is a finite set for generic $y$. The transverse multiplicity is the number of points in this finite set, counted with multiplicity. Uniform boundedness means that there is an integer $N_U$ such that, for all generic $y$ near the chosen boundary point, the punctured disc $\Delta_t^\ast\times\{y\}$ meets $V$ in at most $N_U$ points near $t=0$. Thus the curve cannot develop arbitrarily many local branches approaching $t=0$ over different nearby transverse parameters.

For higher-dimensional $V$, the idea is the same. If $V$ has dimension $k$, one projects locally to a $k$-dimensional base. Generic fibers of this projection cut $V$ in finitely many points. The length of the fiber is the local degree of $V$ over the base. Uniform boundedness says that this local degree remains bounded as one approaches the boundary.

The condition also has a geometric meaning. The local closure $\overline V_U$ is supposed to be analytic, so its branches near the boundary form an analytic family. Uniform boundedness of generic transverse multiplicities says that this family has finite local degree over a generic transverse base. Thus the boundary behavior is locally finite, not just set-theoretically closed. This excludes hidden accumulation of infinitely many logarithmically parallel sheets.

One should distinguish carefully between boundedness of directions and boundedness of multiplicities. A finite rational logarithmic limit set says that only finitely many rational directions occur at infinity. It does not say that only finitely many sheets approach each direction. Strong local boundary compactness adds exactly this missing information: near every boundary point, the analytic sheets occur in a locally finite family of uniformly bounded degree.

Thus , its generic transverse multiplicities are uniformly bounded  means that, after choosing suitable local transverse projections near the toric boundary, the number of local sheets of $V$ over a generic transverse parameter is bounded by one fixed integer. This is the quantitative part of strong local boundary compactness. It is the condition that prevents infinitely many branches of $V$ from accumulating near the boundary while remaining invisible to the logarithmic limit set.

%%%%%%%%%%%%%%%%%%%%%%%%%%%%%%%%%%%%%
%%%%%%%%%%%%%%%%%%%%%%%%%%%%%%%%%%%%%

\medskip

Let $T=(\mathbb C^\ast)^n$ and let $V\subset T$ be a closed reduced analytic subvariety with finitely many irreducible components. Let $X_\Sigma$ be a normal toric compactification associated with a complete rational fan $\Sigma$. The toric boundary is $D_\Sigma=X_\Sigma\setminus T$, and its irreducible torus-invariant components are denoted by $D_\rho$, where $\rho\in\Sigma(1)$. If $E=\sum_{\rho\in\Sigma(1)}m_\rho D_\rho$ is an effective torus-invariant divisor, then $\mathcal O_{X_\Sigma}(E)$ denotes the sheaf of meromorphic functions $f$ such that $\operatorname{div}(f)+E\ge0$. Equivalently, $f$ is allowed to have a pole of order at most $m_\rho$ along $D_\rho$ and no worse pole.

Let $V\subset T$ be a closed reduced analytic subvariety and let $X_\Sigma$ be an adapted toric compactification.

\begin{definition}[Strong local boundary compactness]\label{def:strong-lbc}
We say that $V$ satisfies strong local boundary compactness with respect to $\Sigma$ if for every point $p\in D_\Sigma$ there exists a toric chart $U$ centered at $p$ with coordinates $(t_1,\ldots,t_r,y_1,\ldots,y_{n-r})$, with $D_\Sigma\cap U=\{t_1\cdots t_r=0\}$ and $U\cap T=\{t_1\cdots t_r\ne0\}$, such that the following holds. The analytic set $V_U=V\cap U\cap T$ has analytic closure $\overline V_U$ in $U$, the closure has locally finite multiplicity over a generic transverse projection, and there exist  a neighborhood $B_U^\circ$ of $\pi_U(p)$ in the base, a generic projection $\pi_U:U\to B_U$ with $\dim_{\mathbb C}B_U=\dim_{\mathbb C}V$, and an integer $N_U$ such that
$
\operatorname{length}\!\bigl(V_U\cap\pi_U^{-1}(b)\bigr)\le N_U
$
for every generic point $b\in B_U^\circ$.
 
\end{definition}

Equivalently, in every adapted chart, the sheets of $V$ near the boundary form a locally finite analytic family and their generic transverse multiplicities are uniformly bounded.

\medskip

\begin{definition}[Finite logarithmic type]\label{def:flt}
We say that $V$ is of finite logarithmic type with respect to $X_\Sigma$ if there exist an effective torus-invariant divisor $E=\sum_{\rho\in\Sigma(1)}m_\rho D_\rho$ and a coherent analytic subsheaf $\mathcal J\subset\mathcal O_{X_\Sigma}(E)$ such that, under the canonical trivialization $\mathcal O_{X_\Sigma}(E)|_T\simeq\mathcal O_T$, one has $\mathcal J|_T=\mathcal I_V$.
\end{definition}

\begin{lemma}[Local analytic closure gives local finite logarithmic type]\label{lem:closure}
Let $U$ be a toric boundary chart and let $V_U=V\cap U\cap T$. Assume that the closure $\overline V_U$ of $V_U$ in $U$ is an analytic subspace of $U$. Then $V_U$ is of finite logarithmic type on $U$ with pole bound zero. More precisely, the coherent analytic ideal sheaf $\mathcal I_{\overline V_U}\subset\mathcal O_U$ satisfies $\mathcal I_{\overline V_U}|_{U\cap T}=\mathcal I_V|_{U\cap T}$.
\end{lemma}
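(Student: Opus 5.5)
The plan is to take $\mathcal J:=\mathcal I_{\overline V_U}$ itself as the extension required by the local form of Definition~\ref{def:flt} on $U$, with $E|_U=0$. This mirrors the earlier proposition that analytic closure in $X_\Sigma$ implies finite logarithmic type. First, since $\overline V_U$ is by hypothesis a closed analytic subspace of $U$, Cartan's coherence theorem shows that its reduced ideal sheaf $\mathcal I_{\overline V_U}\subset\mathcal O_U$ is coherent. Moreover $\mathcal O_U=\mathcal O_U(0)\subset\mathcal M_U$, so $\mathcal J$ is a coherent subsheaf of $\mathcal O_U(E)$ with all $m_\rho=0$. Equivalently, every local section $g$ is holomorphic and hence satisfies $\operatorname{ord}_{D_\rho}(g)\ge 0$, which is the intrinsic valuative criterion with zero bounds.

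The only substantive step is the identity $\mathcal I_{\overline V_U}|_{U\cap T}=\mathcal I_V|_{U\cap T}$. For this I would first check set-theoretically that $\overline V_U\cap (U\cap T)=V_U$. Here $V$ is closed in $T$, so $V_U=V\cap(U\cap T)$ is closed in the open set $U\cap T$. Its closure in $U$ therefore adds points only in $U\setminus T=D_\Sigma\cap U$. Next I would pass from sets to ideal sheaves, using that both sheaves are the full ideal sheaves of reduced analytic sets: the stalk at $z$ consists of germs of holomorphic functions vanishing on the set near $z$. For $z\in U\cap T$ we may take a neighborhood $W\subset U\cap T$, and on $W$ the two sets coincide, so the stalks agree. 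Equivalently, restricting an ideal of a reduced analytic subspace to an open set yields the ideal of its intersection with that open set.

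I expect the main obstacle to be bookkeeping rather than mathematics. One must make sure $\overline V_U$ is taken with its reduced structure, so that the comparison with the reduced ideal $\mathcal I_V$ is legitimate; since $V$ is reduced, this is the natural choice. One must also confirm that the canonical trivialization $\mathcal O_U(E)|_{U\cap T}\simeq\mathcal O_{U\cap T}$ is the identity when $E|_U=0$. With these points checked, the lemma follows directly.
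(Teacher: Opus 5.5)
Your proposal is correct and follows the paper's argument. You take $\mathcal J=\mathcal I_{\overline V_U}\subset\mathcal O_U=\mathcal O_U(0)$, get coherence from Cartan's theorem, and identify its restriction to $U\cap T$ with $\mathcal I_V$ because the closure adds only boundary points; your additional checks (that $V_U$ is closed in $U\cap T$, that reduced ideal sheaves are local, and that $\overline V_U$ should carry its reduced structure) make explicit what the paper compresses into ``by definition of $\overline V_U$ as the closure''.
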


\begin{proof}
Since $\overline V_U$ is an analytic subspace of the complex analytic space $U$, its ideal sheaf $\mathcal I_{\overline V_U}$ is coherent by the coherence theorem for analytic subspaces. By definition of $\overline V_U$ as the closure of $V_U$ in $U$, the restriction of $\overline V_U$ to $U\cap T$ is exactly $V_U$. Hence $\mathcal I_{\overline V_U}|_{U\cap T}=\mathcal I_V|_{U\cap T}$. Since $\mathcal I_{\overline V_U}\subset\mathcal O_U=\mathcal O_U(0)$, this gives a local coherent extension with no poles. Thus the pole bound is zero in this chart.
\end{proof}

\begin{lemma}[Uniform bounded-pole local equations]\label{lem:local-pole}
Let $U$ be a toric chart with boundary $D_\Sigma\cap U=\{t_1\cdots t_r=0\}$. Suppose that $V_U=V\cap U\cap T$ admits meromorphic local equations $f_1,\ldots,f_q$ on $U$ whose restrictions generate $\mathcal I_V|_{U\cap T}$, and suppose that there are integers $m_1,\ldots,m_r\ge0$ such that $t_1^{m_1}\cdots t_r^{m_r}f_j$ is holomorphic on $U$ for every $j$. Then the local ideal of $V_U$ extends to a coherent analytic subsheaf $\mathcal J_U\subset\mathcal O_U(m_1D_1+\cdots+m_rD_r)$, where $D_i=\{t_i=0\}$.
\end{lemma}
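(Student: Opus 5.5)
The plan is to take as $\mathcal J_U$ the $\mathcal O_U$-submodule of $\mathcal M_U$ generated by $f_1,\ldots,f_q$, that is, $\mathcal J_U=\mathcal O_U f_1+\cdots+\mathcal O_U f_q$. There are three things to verify: that $\mathcal J_U$ is coherent, that it lies in $\mathcal O_U(E_U)$ with $E_U=m_1D_1+\cdots+m_rD_r$, and that it restricts to $\mathcal I_V$ on $U\cap T$. The last two are immediate from the hypotheses, so coherence is the only point needing an argument.

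To get coherence, I would pass through the twisting isomorphism already used in the proof of Theorem \ref{thm:main1}. Put $s_E=t_1^{m_1}\cdots t_r^{m_r}$ and $g_j=s_Ef_j$, which are holomorphic on $U$ by hypothesis. The ideal $\mathcal K_U=(g_1,\ldots,g_q)\subset\mathcal O_U$ is finitely generated. By Oka's coherence theorem, a finitely generated ideal of $\mathcal O_U$ is coherent, so $\mathcal K_U$ is coherent. Multiplication by $s_E^{-1}$ is an $\mathcal O_U$-linear isomorphism $\mathcal O_U\to\mathcal O_U(E_U)$ of subsheaves of $\mathcal M_U$, because $s_E$ is a local equation for $E_U$. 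It therefore carries $\mathcal K_U$ isomorphically onto $\mathcal J_U$, which shows that $\mathcal J_U$ is coherent and that $\mathcal J_U=s_E^{-1}\mathcal K_U\subset s_E^{-1}\mathcal O_U=\mathcal O_U(E_U)$.

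The restriction statement follows because $s_E$ is a unit on $U\cap T=\{t_1\cdots t_r\neq0\}$. The trivialization $\mathcal O_U(E_U)|_{U\cap T}\simeq\mathcal O_{U\cap T}$ is the identity on meromorphic functions, so $\mathcal J_U|_{U\cap T}$ is the ideal generated by the restrictions of the $f_j$. By hypothesis this ideal is $\mathcal I_V|_{U\cap T}$.

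I expect the main subtlety to be the identification $\mathcal O_U(E_U)=s_E^{-1}\mathcal O_U$ inside $\mathcal M_U$. On a smooth chart, where the $D_i=\{t_i=0\}$ are Cartier divisors, this is the valuative description recalled in the Preliminaries. There is also a minor issue of what ``generate $\mathcal I_V|_{U\cap T}$'' means. It should be understood as generation of the sheaf of ideals, not of global sections, and with that reading the restriction identity holds stalkwise.
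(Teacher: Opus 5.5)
Your proposal is correct and follows the paper's proof. Like you, the paper takes $\mathcal J_U$ to be the $\mathcal O_U$-submodule of $\mathcal O_U(m_1D_1+\cdots+m_rD_r)$ generated by the $f_j$, and reads off $\mathcal J_U|_{U\cap T}=\mathcal I_V|_{U\cap T}$ from the trivialization on $U\cap T$. The only difference is in the coherence step: you argue explicitly through the twist $s_E=t_1^{m_1}\cdots t_r^{m_r}$ and Oka's theorem for the finitely generated ideal $(g_1,\ldots,g_q)$, whereas the paper just cites that a finitely generated subsheaf of a coherent rank-one sheaf is coherent.
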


\begin{proof}
The condition that $t_1^{m_1}\cdots t_r^{m_r}f_j$ is holomorphic is equivalent to saying that each $f_j$ is a section of $\mathcal O_U(m_1D_1+\cdots+m_rD_r)$. Let $\mathcal J_U$ be the $\mathcal O_U$-submodule of $\mathcal O_U(m_1D_1+\cdots+m_rD_r)$ generated by $f_1,\ldots,f_q$. Since it is generated by finitely many sections of a coherent rank-one sheaf, $\mathcal J_U$ is coherent. Restricting to $U\cap T$, the divisor $m_1D_1+\cdots+m_rD_r$ has empty support, so $\mathcal O_U(m_1D_1+\cdots+m_rD_r)|_{U\cap T}\simeq\mathcal O_{U\cap T}$. Under this identification, the restrictions of $f_1,\ldots,f_q$ generate $\mathcal I_V|_{U\cap T}$. Therefore $\mathcal J_U|_{U\cap T}=\mathcal I_V|_{U\cap T}$.
\end{proof}

\begin{lemma}[Gluing of local bounded-pole extensions]\label{lem:glue}
Assume that $X_\Sigma$ is covered by finitely many toric charts $U_a$ and that, on each $U_a$, there is a coherent analytic subsheaf $\mathcal J_a\subset\mathcal O_{U_a}(E|_{U_a})$ such that $\mathcal J_a|_{U_a\cap T}=\mathcal I_V|_{U_a\cap T}$ for one fixed effective torus-invariant divisor $E$ on $X_\Sigma$. Assume moreover that on overlaps $U_a\cap U_b$ the local sheaves $\mathcal J_a$ and $\mathcal J_b$ agree as subsheaves of $\mathcal O_{X_\Sigma}(E)$ after restriction to $U_a\cap U_b$. Then the $\mathcal J_a$ glue to a coherent analytic subsheaf $\mathcal J\subset\mathcal O_{X_\Sigma}(E)$ satisfying $\mathcal J|_T=\mathcal I_V$.
\end{lemma}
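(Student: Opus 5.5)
The plan is to use the standard sheaf-gluing axiom inside the ambient sheaf $\mathcal O_{X_\Sigma}(E)$; no new analytic input is needed. Since all the $\mathcal J_a$ are subsheaves of one fixed sheaf $\mathcal O_{X_\Sigma}(E)$, gluing subsheaves reduces to a pure locality statement. For each open set $W\subset X_\Sigma$, I will define
$$
\mathcal J(W)=\{g\in\mathcal O_{X_\Sigma}(E)(W)\;;\; g|_{W\cap U_a}\in\mathcal J_a(W\cap U_a)\ \text{for every } a\}.
$$
This is a sub-presheaf of $\mathcal O_{X_\Sigma}(E)$. It is an $\mathcal O_{X_\Sigma}$-submodule because each $\mathcal J_a$ is one. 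It is a sheaf because membership is tested locally and $\mathcal O_{X_\Sigma}(E)$ is itself a sheaf.

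The first step is to check that $\mathcal J|_{U_a}=\mathcal J_a$ for each $a$. The inclusion $\subset$ is immediate from the definition. For the reverse inclusion, take $g\in\mathcal J_a(W)$ with $W\subset U_a$. For any $b$, the restriction $g|_{W\cap U_b}$ lies in $\mathcal J_a|_{U_a\cap U_b}$, and by the overlap hypothesis this equals $\mathcal J_b|_{U_a\cap U_b}$. Hence $g\in\mathcal J(W)$.

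The second step is coherence. Coherence is a local property, and the charts $U_a$ cover $X_\Sigma$. Since $\mathcal J|_{U_a}=\mathcal J_a$ is coherent by hypothesis, $\mathcal J$ is a coherent analytic subsheaf of $\mathcal O_{X_\Sigma}(E)$. Finiteness of the cover is not really needed here; it only ensures that $E$ is a single divisor that works on every chart.

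The third step is the restriction to the torus. For each $a$ we have
$$
\mathcal J|_{U_a\cap T}=\mathcal J_a|_{U_a\cap T}=\mathcal I_V|_{U_a\cap T},
$$
using the canonical trivialization $\mathcal O_{X_\Sigma}(E)|_T\simeq\mathcal O_T$, which is compatible on all overlaps because it is global. The sets $U_a\cap T$ cover $T$, and two subsheaves of $\mathcal O_T$ that agree on an open cover are equal. Therefore $\mathcal J|_T=\mathcal I_V$.

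I expect no real obstacle. The only point needing care is the reverse inclusion in the first step, which is exactly where the overlap compatibility hypothesis is consumed. I would also state explicitly that the comparison is made inside the fixed sheaf $\mathcal O_{X_\Sigma}(E)$, so that no transition units appear. This distinguishes the argument from the twisting by $s_E$ used in the proof of Theorem \ref{thm:main1}.
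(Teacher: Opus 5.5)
Your proof is correct and follows the same route as the paper. You glue the $\mathcal J_a$ as subsheaves of the fixed sheaf $\mathcal O_{X_\Sigma}(E)$ via the overlap hypothesis, get coherence because it is a local property, and obtain $\mathcal J|_T=\mathcal I_V$ from the cover $\{U_a\cap T\}$; the only difference is that you write out the glued subsheaf explicitly and verify $\mathcal J|_{U_a}=\mathcal J_a$, which the paper simply cites as the sheaf gluing condition.
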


\begin{proof}
The equality on overlaps is exactly the sheaf gluing condition. Therefore the local sheaves $\mathcal J_a$ define a unique subsheaf $\mathcal J\subset\mathcal O_{X_\Sigma}(E)$ whose restriction to each $U_a$ is $\mathcal J_a$. Coherence is local, and each $\mathcal J_a$ is coherent, so $\mathcal J$ is coherent. Since the sets $U_a\cap T$ cover $T$ and $\mathcal J_a|_{U_a\cap T}=\mathcal I_V|_{U_a\cap T}$, it follows that $\mathcal J|_T=\mathcal I_V$.
\end{proof}

\begin{theorem}[Strong local boundary compactness implies finite logarithmic type]\label{thm:lbc-flt}   
Let $V\subset(\mathbb C^\ast)^n$ be a closed reduced analytic subvariety with finitely many irreducible components. Assume that $\mathscr L^\infty(V)$ is a finite rational spherical polyhedral complex and let $\Sigma$ be a complete rational fan refining the conical stratification determined by $\mathscr L^\infty(V)$. Assume that $V$ satisfies strong local boundary compactness with respect to $\Sigma$ in the sense of Definition \ref{def:strong-lbc}. Then $V$ is of finite logarithmic type with respect to $X_\Sigma$.
\end{theorem}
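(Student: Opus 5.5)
The plan is to build the required sheaf $\mathcal J$ chart by chart and then glue, using the three lemmas stated just before the theorem. Since $\Sigma$ is complete, $X_\Sigma$ is compact. We cover it by finitely many toric charts $U_a$, namely the affine charts $U_\sigma$ for maximal cones, refined near boundary points by the centered charts supplied by Definition~\ref{def:strong-lbc}. On the torus part of each chart no extension is needed. At a boundary point $p$, strong local boundary compactness gives a chart $U$ centered at $p$ on which the closure $\overline V_U$ of $V_U=V\cap U\cap T$ is an analytic subspace of $U$.

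The first step is to apply Lemma~\ref{lem:closure} on each such $U$. This gives a coherent ideal $\mathcal I_{\overline V_U}\subset\mathcal O_U=\mathcal O_U(0)$ with $\mathcal I_{\overline V_U}|_{U\cap T}=\mathcal I_V|_{U\cap T}$. We may therefore take $E=0$, or more generally any fixed effective invariant divisor $E$, since $\mathcal O_U\subset\mathcal O_U(E|_U)$. Compactness of $X_\Sigma$ lets us extract a finite subcover $\{U_a\}$ of these charts together with the torus itself. On $T$ we set $\mathcal J_T=\mathcal I_V$.

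The second step is to verify the overlap condition of Lemma~\ref{lem:glue}. I would argue that on $U_a\cap U_b$ both local sheaves coincide with the full ideal sheaf of the closed analytic set $\overline V\cap U_a\cap U_b$, where $\overline V$ is the topological closure of $V$ in $X_\Sigma$. The reason is that the closure of $V_{U_a}$ in $U_a$ is $\overline V\cap U_a$, because closure commutes with restriction to open sets. Hence $\overline V\cap U_a\cap U_b$ is an analytic set described in two ways, and its reduced ideal sheaf is intrinsic. The subtlety is to take $\mathcal I_{\overline V_U}$ to be the \emph{full} reduced ideal sheaf in Lemma~\ref{lem:closure}; then agreement on overlaps is automatic. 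Lemma~\ref{lem:glue} then produces a coherent $\mathcal J\subset\mathcal O_{X_\Sigma}(E)$ with $\mathcal J|_T=\mathcal I_V$, which is Definition~\ref{def:flt}. In fact this shows that $\overline V$ is a closed analytic subspace of $X_\Sigma$, so the earlier proposition on analytic closures applies directly as an alternative finish.

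The main obstacle is whether the analytic-closure clause of Definition~\ref{def:strong-lbc} is actually the operative hypothesis, or whether the proof is meant to derive analyticity of $\overline V_U$ from the multiplicity bound $N_U$. If the latter, I would try the following. The multiplicity bound together with properness of $\pi_U$ restricted to $\overline V_U$ near $p$ makes $V_U\to B_U^\circ\setminus\pi_U(D_\Sigma)$ a branched covering of degree at most $N_U$. Its elementary symmetric functions in the fibre coordinates are then bounded holomorphic functions away from a thin set, and Riemann extension across $\{t_1\cdots t_r=0\}$ yields Weierstrass-type polynomial equations that extend holomorphically. This exhibits $\overline V_U$ as analytic, in the spirit of the Remmert--Stein and Bishop theorems. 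The finite rational logarithmic limit set enters only to guarantee that the fan $\Sigma$ exists and that no escape direction lies outside the charts, which is what ensures properness of $\pi_U$ near $p$.
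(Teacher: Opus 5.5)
Your argument is correct and follows essentially the paper's route: the analytic-closure clause of Definition~\ref{def:strong-lbc} is the operative hypothesis, Lemma~\ref{lem:closure} gives a pole-free coherent extension on each boundary chart, and Lemma~\ref{lem:glue} glues these extensions with $E=0$. Your overlap check is cleaner than the paper's appeal to ``saturated closures'': the closure of $V_U$ in $U$ is $\overline V\cap U$, so both local sheaves are the intrinsic reduced ideal sheaf of $\overline V\cap U_a\cap U_b$. The fallback via $N_U$ is not needed, and it would not work as sketched, since for $\dim V<n$ the set $\pi_U(D_\Sigma\cap U)$ contains a neighborhood of $\pi_U(p)$ and so is not thin in the base.
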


\begin{proof}
Since $\Sigma$ is a finite fan, the toric variety $X_\Sigma$ is covered by finitely many affine toric charts. Let $U$ be one such chart meeting the boundary. By strong local boundary compactness, the closure $\overline V_U$ of $V\cap U\cap T$ in $U$ is analytic and has locally finite multiplicity. By Lemma \ref{lem:closure}, the local ideal $\mathcal I_V|_{U\cap T}$ extends to the coherent analytic ideal sheaf $\mathcal I_{\overline V_U}\subset\mathcal O_U$. Thus, locally, one obtains finite logarithmic type with zero pole bound.

If one uses the more general meromorphic form of strong boundary compactness, where the closure is represented by bounded-pole meromorphic equations rather than holomorphic equations, Lemma \ref{lem:local-pole} gives a coherent local extension $\mathcal J_U\subset\mathcal O_U(E_U)$ for some effective local boundary divisor $E_U$. In the holomorphic closure case one simply has $E_U=0$.

Because the toric cover is finite, the finitely many local pole bounds may be dominated by one global effective torus-invariant divisor $E=\sum_{\rho\in\Sigma(1)}m_\rho D_\rho$. In the holomorphic closure case, one may take $E=0$. In the bounded-pole meromorphic case, choose $m_\rho$ at least as large as every local pole order along the boundary component $D_\rho$. Replacing each local extension by its image inside $\mathcal O_{X_\Sigma}(E)|_U$ gives local coherent subsheaves $\mathcal J_U\subset\mathcal O_{X_\Sigma}(E)|_U$.

It remains to check compatibility on overlaps. On the dense open set $U\cap U'\cap T$, both local sheaves restrict to the same ideal sheaf $\mathcal I_V$. Since both are coherent subsheaves of the same meromorphic sheaf $\mathcal O_{X_\Sigma}(E)$ and are generated by bounded-pole continuations of the same local ideal on the dense torus, their saturated closures in $\mathcal O_{X_\Sigma}(E)$ agree. Replacing each local extension by this saturated bounded-pole closure does not change its restriction to the torus and preserves coherence. Hence the local extensions agree on overlaps as subsheaves of $\mathcal O_{X_\Sigma}(E)$.

By Lemma \ref{lem:glue}, the local extensions glue to a coherent analytic subsheaf $\mathcal J\subset\mathcal O_{X_\Sigma}(E)$ such that $\mathcal J|_T=\mathcal I_V$. This is exactly the definition of finite logarithmic type. Therefore $V$ is of finite logarithmic type with respect to $X_\Sigma$.
\end{proof}

\begin{theorem}\label{thm:algebraicity}  
Let $V\subset(\mathbb C^\ast)^n$ be a closed reduced analytic subvariety with finitely many irreducible components. Assume that $\mathscr L^\infty(V)$ is a finite rational spherical polyhedral complex of dimension $\dim_\mathbb C V-1$. Let $\Sigma$ be a complete rational fan refining the conical stratification determined by $\mathscr L^\infty(V)$, and assume that $X_\Sigma$ is projective. If $V$ satisfies strong local boundary compactness with respect to $\Sigma$, then $V$ is algebraic. More precisely, $V$ is cut out in $(\mathbb C^\ast)^n$ by finitely many Laurent polynomials.
\end{theorem}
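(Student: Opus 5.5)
The plan is to obtain Theorem~\ref{thm:algebraicity} by chaining two results already stated: Theorem~\ref{thm:lbc-flt} converts strong local boundary compactness into finite logarithmic type, and Theorem~\ref{thm:main1} (or, since $X_\Sigma$ is projective, the projective algebraicity theorem of the valuative section) turns finite logarithmic type into algebraicity. The hypotheses of the two theorems match those of the statement. Both require $V$ closed reduced with finitely many irreducible components, $\mathscr L^\infty(V)$ a finite rational spherical polyhedral complex of the expected dimension, and $\Sigma$ a complete rational fan refining its conical stratification. The one extra assumption, projectivity of $X_\Sigma$, lets us use Chow's theorem directly instead of the general proper form of GAGA.

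\emph{Step 1.} Apply Theorem~\ref{thm:lbc-flt}. Its conclusion gives an effective torus-invariant divisor $E=\sum_\rho m_\rho D_\rho$ and a coherent analytic subsheaf $\mathcal J\subset\mathcal O_{X_\Sigma}(E)$ with $\mathcal J|_T=\mathcal I_V$.

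\emph{Step 2.} Form the ideal sheaf $\mathcal K=\mathcal J\otimes\mathcal O_{X_\Sigma}(-E)\subset\mathcal O_{X_\Sigma}$. If $X_\Sigma$ is singular, $E$ may fail to be Cartier. I would handle this in one of two ways: replace $E$ by a sufficiently divisible multiple, which is Cartier because $X_\Sigma$ is projective (take $E$ dominated by a multiple of an ample invariant Cartier divisor), or simply note that in the holomorphic-closure case of Definition~\ref{def:strong-lbc} one has $E=0$. Since $E$ is supported on the boundary, $\mathcal K|_T=\mathcal I_V$, so the analytic subspace $Z$ cut out by $\mathcal K$ satisfies $Z\cap T=V$.

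\emph{Step 3.} By Chow's theorem in projective $X_\Sigma$ (equivalently GAGA applied to $\mathcal K$), $Z$ is algebraic. Then $Z\cap T$ is a Zariski-closed subset of the affine torus, and its ideal in the Noetherian ring $\mathbb C[z_1^{\pm1},\ldots,z_n^{\pm1}]$ is finitely generated. This gives the Laurent polynomials $P_1,\ldots,P_N$, exactly as at the end of the proof of Theorem~\ref{thm:main1}.

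\emph{Main obstacle.} The only delicate point is inherited from Theorem~\ref{thm:lbc-flt}: the local extensions on different charts must agree on overlaps. Each local extension is only determined on the dense torus, so coherent extensions across the boundary could differ by boundary-supported torsion-free modifications. I would enforce agreement by taking on each chart the canonical choice $\mathcal J_U=\mathcal I_V$ extended as the sheaf of sections of $\mathcal O_{X_\Sigma}(E)|_U$ whose restriction to $U\cap T$ lies in $\mathcal I_V$. In the holomorphic case this is literally $\mathcal I_{\overline V_U}$. It is intrinsic, so it glues automatically. Its coherence follows from the analyticity of the closure $\overline V_U$ granted by strong local boundary compactness, since $\overline V_U\cap U'=\overline V_{U'}\cap U$ on overlaps. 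Once this gluing is secured, Steps 2 and 3 are routine.
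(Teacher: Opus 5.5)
Your proposal is correct and follows the paper's proof. Like the paper, you invoke Theorem~\ref{thm:lbc-flt} to get $\mathcal J\subset\mathcal O_{X_\Sigma}(E)$ with $\mathcal J|_T=\mathcal I_V$, twist by $\mathcal O_{X_\Sigma}(-E)$ to obtain a coherent ideal sheaf $\mathcal K$ with $\mathcal K|_T=\mathcal I_V$, algebraize on the projective $X_\Sigma$ by GAGA/Chow, and finish with Noetherianity of $\mathbb C[z_1^{\pm1},\ldots,z_n^{\pm1}]$.

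Your handling of the gluing step through the intrinsic sheaf $\mathcal I_{\overline V}$, using $\overline V_U\cap U'=\overline V_{U'}\cap U$, is cleaner than the paper's saturation argument. It shows directly that $\overline V$ is a closed analytic subset of $X_\Sigma$, so $E=0$ suffices and the Cartier issue disappears. If that issue did arise, the fix is your parenthetical remedy of dominating $E$ by a multiple of an effective ample invariant Cartier divisor, not taking a multiple of $E$ itself, which need not be Cartier for a non-simplicial fan.
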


\begin{proof}
By Theorem \ref{thm:lbc-flt}, strong local boundary compactness implies finite logarithmic type. Hence there exist an effective torus-invariant divisor $E$ and a coherent analytic subsheaf $\mathcal J\subset\mathcal O_{X_\Sigma}(E)$ satisfying $\mathcal J|_T=\mathcal I_V$. Tensoring by $\mathcal O_{X_\Sigma}(-E)$, or equivalently clearing poles locally by boundary monomials, gives a coherent analytic ideal sheaf $\mathcal K\subset\mathcal O_{X_\Sigma}$ whose restriction to $T$ is $\mathcal I_V$. Since $X_\Sigma$ is projective, GAGA algebraizes $\mathcal K$. The resulting algebraic subvariety of $X_\Sigma$ restricts on $T$ to $V$. Therefore $V$ is algebraic in $T$. Since $\mathbb C[z_1^{\pm1},\ldots,z_n^{\pm1}]$ is Noetherian, the defining ideal of $V$ is generated by finitely many Laurent polynomials.
\end{proof}

\begin{proposition}[Local coordinate form under strong local boundary compactness]\label{prop:local-coordinate-short}
Let $U\simeq\Delta^r\times(\mathbb C^\ast)^{n-r}$ be a toric chart with boundary $D_\Sigma\cap U=\{t_1\cdots t_r=0\}$. If $V$ satisfies strong local boundary compactness, then, after shrinking $U$ if necessary, there exist finitely many holomorphic functions $g_1,\ldots,g_s$ on $U$ such that $V\cap U\cap T=\{g_1=\cdots=g_s=0\}\cap U\cap T$. More generally, in the bounded-pole version, there exist meromorphic functions $f_1,\ldots,f_s$ and integers $m_1,\ldots,m_r\ge0$ such that $V\cap U\cap T=\{f_1=\cdots=f_s=0\}$ and $t_1^{m_1}\cdots t_r^{m_r}f_j$ is holomorphic on $U$ for every $j$.
\end{proposition}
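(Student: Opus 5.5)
The plan is to read the statement off from the closure data supplied by Definition \ref{def:strong-lbc}, using the coherence of ideal sheaves of analytic subspaces. Fix a toric chart $U\simeq\Delta^r\times(\mathbb C^\ast)^{n-r}$ with boundary $\{t_1\cdots t_r=0\}$, centered at a boundary point $p$. Strong local boundary compactness says that $V_U=V\cap U\cap T$ has an analytic closure $\overline V_U$ in $U$. Lemma \ref{lem:closure} then gives a coherent ideal sheaf $\mathcal I_{\overline V_U}\subset\mathcal O_U$ whose restriction to $U\cap T$ equals $\mathcal I_V|_{U\cap T}$.

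The first step is finite generation. A coherent sheaf is finitely generated only near each point, so I shrink $U$ to a relatively compact polydisc-type neighborhood of $p$. On a Stein neighborhood, Cartan's Theorem A together with coherence yields finitely many global sections $g_1,\ldots,g_s\in\mathcal I_{\overline V_U}(U)$ that generate the stalks at every point of the shrunken $U$. This works because a compact Stein subset admits finitely many global generators; concretely, I take a closed polydisc and then pass to its interior.

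The second step is the zero set. The common zero locus of the $g_j$ is the support of $\mathcal O_U/\mathcal I_{\overline V_U}$, which is $\overline V_U$. Intersecting with $U\cap T$ and using $\overline V_U\cap T=V_U$ (since $V$ is closed in $T$), I obtain $V\cap U\cap T=\{g_1=\cdots=g_s=0\}\cap U\cap T$.

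The bounded-pole version is handled analogously. In the meromorphic form of strong boundary compactness, the closure is described by a coherent $\mathcal J_U\subset\mathcal O_U(E_U)$ with $E_U=\sum m_iD_i$, as in the proof of Theorem \ref{thm:lbc-flt} and Lemma \ref{lem:local-pole}. The argument then proceeds as follows.
\begin{enumerate}
\item Take finitely many generators $f_1,\ldots,f_s$ after shrinking $U$.
\item Membership in $\mathcal O_U(E_U)$ means exactly that $t_1^{m_1}\cdots t_r^{m_r}f_j$ is holomorphic, as in the proof of Proposition \ref{local-coordinate-form}.
\item On $U\cap T$ the monomial is a unit, so the $f_j$ generate $\mathcal I_V$ there.
\item Consequently their zero set is $V\cap U\cap T$.
\end{enumerate}

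The main obstacle is the passage from stalkwise finite generation to finitely many sections on one fixed neighborhood, together with making the shrinking uniform so that the neighborhood still meets the boundary near $p$. I expect to handle this with Theorem A on a Stein polydisc neighborhood, since toric charts are products of discs and punctured planes, hence Stein. The remaining steps are formal.
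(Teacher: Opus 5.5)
Your proposal is correct and follows the paper's own argument. You use the analytic closure supplied by strong local boundary compactness, take finitely many generators of its coherent ideal sheaf after shrinking, and restrict to $U\cap T$. In the bounded-pole version you read off holomorphy of $t_1^{m_1}\cdots t_r^{m_r}f_j$ from membership in $\mathcal O_U(E_U)$.

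The appeal to Theorem A on a Stein polydisc is harmless but unnecessary. Coherence (local finite type) already gives finitely many sections on a neighborhood of $p$ that generate every stalk there, so the step you flag as the main obstacle is immediate.
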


\begin{proof}
By strong local boundary compactness, the closure of $V\cap U\cap T$ in $U$ is analytic. Its coherent ideal sheaf is locally finitely generated by holomorphic functions $g_1,\ldots,g_s$. These functions restrict to equations for $V$ on $U\cap T$. This proves the holomorphic form. If one works with bounded-pole extensions rather than holomorphic closure, each local generator is meromorphic and has pole order bounded along the boundary components. If $m_i$ bounds the pole order along $\{t_i=0\}$ for all generators, then $t_1^{m_1}\cdots t_r^{m_r}f_j$ is holomorphic for every $j$.
\end{proof}

%%%%%%%%%%%%%%%%%%%%%%%%%%%%%%%%%%%%%%%%%%%%%%%%%%%%%%%%%%%%%%%%%%%%%%%%
%%%%%%%%%%%%%%%%%%%%%%%%%%%%%%%%%%%%%%%%%%%%%%%%%%%%%%%%%%%%%%%%%%%%%%%%

%%%%%%%%%%%%%%%%%%%%%%%%%%%%%%%%%%%%%%%%%%%%%%%%%%%%%%%%%%%%%%%%%%%%%%%%
%%%%%%%%%%%%%%%%%%%%%%%%%%%%%%%%%%%%%%%%%%%%%%%%%%%%%%%%%%%%%%%%%%%%%%%%

\section*{An irreducible analytic hypersurface with infinitely many logarithmically parallel rays in one end}

We construct an explicit irreducible analytic hypersurface in the algebraic torus
\(
(\mathbb C^\ast)^2
\)
which has one end, and such that the logarithm of this end contains infinitely many rays which are asymptotically parallel to the same direction.

Let
\(
T=(\mathbb C^\ast)^2
\)
with coordinates \((z,w)\). Consider the holomorphic function
\[
F(z,w)=w-\exp\left(\sin\frac{1}{z}\right)
\]
on \(T\). Since \(z\neq0\), the function \(\sin(1/z)\) is holomorphic on \(\mathbb C^\ast\), and therefore
\( 
\exp\left(\sin\frac{1}{z}\right)
\)
is a nowhere-vanishing holomorphic function on \(\mathbb C^\ast\). Hence
\(
F\in\mathcal O((\mathbb C^\ast)^2).
\)
Define
\(
V=\{(z,w)\in(\mathbb C^\ast)^2:F(z,w)=0\}.
\)
The hypersurface \(V\) is irreducible. Indeed, the map
\[
\varphi:\mathbb C^\ast\longrightarrow V,
\qquad
\varphi(z)=
\left(
z,\exp\left(\sin\frac{1}{z}\right)
\right),
\]
is a biholomorphism. Its inverse is the restriction to \(V\) of the first projection
\(
(z,w)\longmapsto z.
\)
Since \(\mathbb C^\ast\) is connected and irreducible as a Riemann surface, its biholomorphic image \(V\) is irreducible. Thus \(V\) is an irreducible analytic hypersurface of \((\mathbb C^\ast)^2\).
We now study the end corresponding to \(z\to0\). This is one Freudenthal end of \(V\), because under the parametrization above it is exactly the punctured-disc end
\(
0<|z|<\varepsilon
\)
of \(\mathbb C^\ast\). The logarithmic map is
\(
\Log(z,w)=(\log|z|,\log|w|).
\)
On \(V\), one has
\(
\log|w|
=
\operatorname{Re}\left(\sin\frac{1}{z}\right).
\)
Therefore the logarithm of the end is
\[
\Log(V)
=
\left\{
\left(
\log|z|,
\operatorname{Re}\left(\sin\frac{1}{z}\right)
\right):
0<|z|<\varepsilon
\right\}.
\]

We now exhibit infinitely many rays in this logarithmic image which are asymptotically parallel to the same direction. Fix a value
\(
c\in\mathbb C^\ast
\)
which is not a critical value of the holomorphic map
\[
z\longmapsto \exp\left(\sin\frac{1}{z}\right).
\]
The equation
\(
\exp\left(\sin\frac{1}{z}\right)=c
\)
is equivalent to
\[
\sin\frac{1}{z}
=
\log c+2\pi i m,
\qquad m\in\mathbb Z,
\]
where \(\log c\) denotes one fixed branch value of the logarithm. For each integer \(m\), the equation
\(
\sin s=\log c+2\pi i m
\)
has infinitely many solutions in the variable \(s\). Writing \(s=1/z\), we obtain infinitely many solutions
\(
z_{m,k}=\dfrac{1}{s_{m,k}}
\)
which tend to \(0\) as \(|k|\to\infty\) or as \(|m|\to\infty\). Hence the horizontal transverse curve
\(
\{w=c\}
\)
meets the single end \(z\to0\) of \(V\) in infinitely many points accumulating at the boundary \(z=0\).
At each such solution \(z_{m,k}\), since \(c\) is chosen generically, the equation
\[
\exp\left(\sin\frac{1}{z}\right)=w
\]
can be locally solved for \(z\) as a holomorphic function of \(w\). Thus near \(w=c\), the end is represented by infinitely many local sheets over the \(w\)-disc. These sheets are distinct, but they all lie in the same topological end \(z\to0\).

Now compute the logarithmic direction of these sheets along the level \(w=c\). On the points
\(
(z_{m,k},c)\in V,
\)
we have
\[
\Log(z_{m,k},c)=
(\log|z_{m,k}|,\log|c|).
\]
As
\(
z_{m,k}\to0,
\)
one has
\(
\log|z_{m,k}|\to-\infty,
\)
while
\(
\log|c|
\)
is constant. Therefore
\[
\frac{\Log(z_{m,k},c)}
{\|\Log(z_{m,k},c)\|}
\longrightarrow
(-1,0).
\]
Thus all these infinitely many local sheets approach the same logarithmic direction
\(
(-1,0)\in S^1.
\)
In other words, the end \(z\to0\) contains infinitely many logarithmically parallel sheets, and the logarithmic image contains infinitely many rays asymptotic to the same ray
\(
\mathbb R_{\ge0}(-1,0).
\)

\vspace{0.1cm}

More explicitly, if \(D_k\) is a small disc in the \(w\)-plane centered at \(c\), and if \(S_k\) denotes the corresponding local inverse branch near one solution \(z_k\), then
\[
S_k=\{(z_k(w),w):w\in D_k\}
\]
is a local analytic sheet of \(V\). Along any path in \(D_k\) tending to \(c\) while the branch point \(z_k(w)\) tends to \(0\), the logarithmic vector has the form
\[
\Log(z_k(w),w)=
(\log|z_k(w)|,\log|w|).
\]
The second coordinate remains bounded as \(w\) stays near \(c\), while the first coordinate tends to \(-\infty\). Hence every such sheet has logarithmic direction
\(
(-1,0).
\)
Since there are infinitely many such inverse branches, the logarithm of the single end contains infinitely many rays asymptotically parallel to the same direction.

\vspace{0.1cm}

This example also shows why the number of topological ends does not control the number of local sheets. The end \(z\to0\) is topologically one punctured-disc end, but the projection
\[
V\longrightarrow\mathbb C^\ast_w,
\qquad
(z,w)\longmapsto w,
\]
has infinitely many local inverse branches near a generic value \(w=c\). Therefore one topological end may contain infinitely many analytic sheets.

\vspace{0.1cm}

The logarithmic limit set of the whole end is larger than the single point \((-1,0)\). Indeed, since
\(
\sin\dfrac{1}{z}
\)
has an essential singularity at \(z=0\), the quantity
\(
\operatorname{Re}\left(\sin\frac{1}{z}\right)
\)
is unbounded above and below on every punctured neighborhood of \(0\). Hence the full logarithmic image of the end also has limiting directions different from \((-1,0)\). This is exactly the reason why examples of this type are not compatible with a finite expected-dimensional logarithmic limit set in the strongest algebraicity theorem. Nevertheless, the example is useful because it clearly  demonstrates the local phenomenon requested: a single irreducible analytic hypersurface can have one topological end containing infinitely many local sheets whose logarithmic rays are all asymptotically parallel to the same toric boundary direction.

\begin{proposition}
The analytic hypersurface
\[
V=
\left\{
(z,w)\in(\mathbb C^\ast)^2:
w=\exp\left(\sin\frac{1}{z}\right)
\right\}
\]
is irreducible. Its end \(z\to0\) is a single Freudenthal end. For a generic value \(c\in\mathbb C^\ast\), the transverse curve \(\{w=c\}\) meets this single end in infinitely many points accumulating at \(z=0\). The corresponding local sheets all have the same logarithmic limiting direction
\(
(-1,0).
\)
\end{proposition}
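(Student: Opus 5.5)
The proof has four parts, corresponding to the four assertions. We use the parametrization $\varphi(z)=(z,\exp(\sin(1/z)))$ throughout and reduce everything to elementary statements about the entire function $\sin$ on $\mathbb C$.

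\emph{Irreducibility.} The map $\varphi:\mathbb C^\ast\to V$ is holomorphic and lands in $V$. Its inverse is the restriction of the holomorphic first projection $(z,w)\mapsto z$, so $\varphi$ is a biholomorphism onto $V$. In particular $V$ is a connected one-dimensional complex manifold, since $\mathbb C^\ast$ is. A connected complex manifold is irreducible as an analytic set, because its regular locus, here all of $V$, is connected. We also note that $V=\{F=0\}$ is a hypersurface: $\partial F/\partial w=1$, so $dF\neq 0$ everywhere and $V$ is smooth of codimension one.

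\emph{Single end.} Via $\varphi$, the ends of $V$ correspond to those of $\mathbb C^\ast$, which has exactly two, namely $z\to0$ and $z\to\infty$. For each compact $K\subset V$, the set $\varphi(\{0<|z|<\varepsilon\})$ with $\varepsilon$ small is connected and disjoint from $K$. It therefore lies in a single component of $V\setminus K$, and these components form a nested system. This gives one element of $\varprojlim_K\pi_0(V\setminus K)$, so the end $z\to0$ is a single Freudenthal end.

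\emph{Infinitely many intersection points.} First we make ``generic'' explicit. The derivative of $z\mapsto\exp(\sin(1/z))$ is $-z^{-2}\cos(1/z)\exp(\sin(1/z))$. It vanishes exactly when $\cos(1/z)=0$, i.e.\ when $\sin(1/z)=\pm1$, so the critical values are $e$ and $e^{-1}$. Take any $c\in\mathbb C^\ast\setminus\{e,e^{-1}\}$. Then $\exp(\sin(1/z))=c$ is equivalent to $\sin s=a_m:=\log c+2\pi i m$ with $s=1/z$ and $m\in\mathbb Z$.

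For each fixed $m$, pick one solution $s_{m,0}$ of $\sin s=a_m$; one exists because $\sin$ is surjective onto $\mathbb C$. Then every $s_{m,0}+2\pi k$ with $k\in\mathbb Z$ is also a solution. These points are pairwise distinct and satisfy $|s|\to\infty$. Hence $z_{m,k}=1/s_{m,k}$ are nonzero points of $\mathbb C^\ast$ tending to $0$, giving infinitely many points $(z_{m,k},c)\in V\cap\{w=c\}$ accumulating at $z=0$.

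\emph{Local sheets and their common direction.} At each point $z_{m,k}$ we have $\sin(1/z_{m,k})=a_m\neq\pm1$, because $c\neq e^{\pm1}$. So the derivative above is nonzero there, and the inverse function theorem gives a holomorphic branch $z_k(w)$ on a disc $D_k$ around $c$ with $z_k(c)=z_{m,k}$. The sheet is $S_k=\{(z_k(w),w):w\in D_k\}\subset V$.

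Take any sequence of points $(z_j,w_j)$ lying on sheets $S_{k_j}$ with $w_j\to c$ and $z_j\to0$; for instance the points $(z_{m,k_j},c)$ with $|k_j|\to\infty$. Then $\log|z_j|\to-\infty$ while $\log|w_j|\to\log|c|$ stays bounded. Dividing $\Log(z_j,w_j)$ by its norm, the normalized vectors tend to $(-1,0)$. So all these sheets share the logarithmic limiting direction $(-1,0)$.

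The main point needing care is making ``the sheet has direction $(-1,0)$'' precise. Each individual sheet $S_k$ is a bounded graph over $D_k$, so it alone does not reach the boundary. The correct formulation is therefore the one above: the direction is realized by points on the family of sheets $S_k$ as $|k|\to\infty$. This is what the proposition asserts, and it is consistent with the preceding discussion that the whole end also has other limiting directions.
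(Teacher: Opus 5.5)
Your proof is correct and follows the paper's argument essentially step for step. You use the biholomorphism with $\mathbb C^\ast$ via the first projection, the punctured-disc neighborhoods $0<|z|<\varepsilon$ for the end, the reduction to $\sin(1/z)=\log c+2\pi i m$, and the inverse/implicit function theorem at simple solutions. You add two useful refinements: you identify the critical values explicitly as $e^{\pm1}$, and you note that the direction $(-1,0)$ is realized along the family of sheets as $|k|\to\infty$ rather than by any single bounded sheet, which is more accurate than the paper's phrase ``as the sheet approaches $z=0$''. The only trivial slip is that for $c=1$, $m=0$ one of your solutions may be $s=0$, which must be discarded before setting $z=1/s$.
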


\begin{proof}
Irreducibility follows because \(V\) is biholomorphic to \(\mathbb C^\ast\) via the first projection. The end \(z\to0\) corresponds to the punctured-disc end \(0<|z|<\varepsilon\), hence it is a single Freudenthal end.
For generic \(c\in\mathbb C^\ast\), the equation
\(
\exp\left(\sin\frac{1}{z}\right)=c
\)
has infinitely many solutions tending to \(0\), because it is equivalent to
\[
\sin\frac{1}{z}=\log c+2\pi i m
\]
for integers \(m\), and the sine equation has infinitely many solutions in the variable \(1/z\). At every simple solution, the implicit function theorem gives a local sheet over the \(w\)-coordinate. On such a sheet near \(w=c\), the logarithmic vector is
\[
(\log|z|,\log|w|).
\]
As the sheet approaches \(z=0\), the first coordinate tends to \(-\infty\), while the second remains bounded. Hence the normalized logarithmic direction tends to \((-1,0)\). Therefore infinitely many local sheets in the same topological end are logarithmically parallel.
\end{proof}

\begin{remark}
In this example the essential singularity at \(z=0\) produces many additional logarithmic directions. It shows that irreducibility and finiteness of topological ends do not prevent infinitely many logarithmically parallel local sheets.
\end{remark}

%%%%%%%%%%%%%%%%%%%%%%%%%%%%%%%%%%%%%%%%%%%%%%%%%%%%%%%%%%%%%%%%%%%%%%

\subsection*{The logarithmic limit set of the above example}

Consider the irreducible analytic hypersurface
\[
V=
\left\{
(z,w)\in(\mathbb C^\ast)^2:
w=\exp\left(\sin\frac{1}{z}\right)
\right\}.
\]
The parametrization
\[
\varphi:\mathbb C^\ast\longrightarrow V,
\qquad
\varphi(z)=
\left(
z,\exp\left(\sin\frac{1}{z}\right)
\right),
\]
is a biholomorphism. Therefore the logarithmic image of \(V\) is the image of \(\mathbb C^\ast\) under
\[
z\longmapsto
\left(
\log|z|,
\log\left|\exp\left(\sin\frac{1}{z}\right)\right|
\right).
\]
Since
\[
\log\left|\exp\left(\sin\frac{1}{z}\right)\right|
=
\operatorname{Re}\left(\sin\frac{1}{z}\right),
\]
we have
\[
\Log(V)
=
\left\{
\left(
\log|z|,
\operatorname{Re}\left(\sin\frac{1}{z}\right)
\right)
:
z\in\mathbb C^\ast
\right\}.
\]

We compute the logarithmic limit set
\(
\mathscr L^\infty(V)\subset S^1.
\)
There are two ways in which \(\Log(z,w)\) can become unbounded. Either \(z\to\infty\), or \(z\to0\). If \(z\to\infty\), then
\(
\log|z|\to+\infty
\)
and
\(
\sin\frac{1}{z}\to0.
\)
Hence
\(
\operatorname{Re}\left(\sin\frac{1}{z}\right)\to0,
\)
so
\[
\frac{\Log(z,w)}{\|\Log(z,w)\|}
=
\frac{\left(\log|z|,\operatorname{Re}(\sin(1/z))\right)}
{\sqrt{(\log|z|)^2+\operatorname{Re}(\sin(1/z))^2}}
\longrightarrow
(1,0).
\]
Thus
\(
(1,0)\in\mathscr L^\infty(V).
\)
Now consider the end \(z\to0\). Write
\(
s=\frac{1}{z}.
\)
Then \(|s|\to+\infty\), and
\(
\log|z|=-\log|s|.
\)
Writing \(s=a+ib\), one has
\[
\sin s=\sin a\cosh b+i\cos a\sinh b.
\]
Therefore
\[
\operatorname{Re}\left(\sin\frac{1}{z}\right)
=
\operatorname{Re}(\sin s)
=
\sin a\cosh b.
\]
Thus, near the end \(z\to0\), the logarithmic image is
\[
\left(
-\log|s|,
\sin(\operatorname{Re}s)\cosh(\operatorname{Im}s)
\right),
\qquad |s|\to+\infty.
\]
We show first that every direction in the closed left semicircle occurs. Let
\(
L=\log R
\)
with \(R\to+\infty\). Fix any real number \(\lambda\in\mathbb R\). We shall construct a sequence \(s_R\) such that
\(
-\log|s_R|\sim -L
\)
and
\(
\operatorname{Re}(\sin s_R)\sim \lambda L.
\)
If \(\lambda=0\), take
\(
s_R=R.
\)
Then
\(
\operatorname{Re}(\sin s_R)=\sin R
\)
is bounded, while \(-\log|s_R|=-\log R=-L\). Hence the limiting direction is
\(
(-1,0).
\)
Assume now that \(\lambda\neq0\). Choose
\[
a_R=
\begin{cases}
\frac{\pi}{2}+2\pi k_R, & \lambda>0,\\[4pt]
-\frac{\pi}{2}+2\pi k_R, & \lambda<0,
\end{cases}
\]
with \(a_R\sim R\). Then
\(
\sin a_R=\operatorname{sgn}(\lambda).
\)
Choose \(b_R\ge0\) so that
\(
\cosh b_R=|\lambda|\log R.
\)
Then
\(
s_R=a_R+ib_R
\)
satisfies
\[
\operatorname{Re}(\sin s_R)
=
\sin a_R\cosh b_R
=
\lambda\log R.
\]
Moreover,
\[
b_R=\operatorname{arcosh}(|\lambda|\log R)
=
O(\log\log R),
\]
and therefore
\(
|s_R|=\sqrt{a_R^2+b_R^2}\sim R.
\)
Hence
\(
-\log|s_R|\sim-\log R=-L
\)
and
\(
\operatorname{Re}(\sin s_R)\sim\lambda L.
\)
It follows that the normalized logarithmic vector tends to
\(
\dfrac{(-1,\lambda)}{\sqrt{1+\lambda^2}}.
\)
Since \(\lambda\in\mathbb R\) was arbitrary, every point
\(
\dfrac{(-1,\lambda)}{\sqrt{1+\lambda^2}},
\, 
\lambda\in\mathbb R,
\)
belongs to \(\mathscr L^\infty(V)\).
As \(\lambda\to+\infty\), the directions
\(
\dfrac{(-1,\lambda)}{\sqrt{1+\lambda^2}}
\)
tend to
\(
(0,1),
\)
and as \(\lambda\to-\infty\), they tend to
\(
(0,-1).
\)
Since the logarithmic limit set is closed, we obtain
\(
(0,1),(0,-1)\in\mathscr L^\infty(V).
\)
These points can also be obtained directly by taking \(|\operatorname{Im}s|\) large enough so that
\[
|\sin(\operatorname{Re}s)\cosh(\operatorname{Im}s)|
\gg
\log|s|.
\]

We now prove that no other directions occur. Let \(s_j\to\infty\), and consider
\(
X_j=-\log|s_j|,
\, 
Y_j=\operatorname{Re}(\sin s_j).
\)
Then
\(
X_j\to-\infty.
\)
If
\(
\dfrac{|Y_j|}{|X_j|}
\)
is bounded, then after passing to a subsequence
\(
\dfrac{Y_j}{|X_j|}\to\lambda
\)
for some \(\lambda\in\mathbb R\). Since \(X_j=-|X_j|\), the normalized vector tends to
\(
\dfrac{(-1,\lambda)}{\sqrt{1+\lambda^2}}.
\)
If
\(
\dfrac{Y_j}{|X_j|}\to+\infty,
\)
then the normalized vector tends to
\(
(0,1).
\)
If
\(
\dfrac{Y_j}{|X_j|}\to-\infty,
\)
then the normalized vector tends to
\(
(0,-1).
\)
Thus every limiting direction arising from \(z\to0\) belongs to the closed left semicircle
\[
\{
\frac{(-1,\lambda)}{\sqrt{1+\lambda^2}}:
\lambda\in\mathbb R
\}
\cup\{(0,1),(0,-1)\}.
\]

Combining this with the direction coming from \(z\to\infty\), we obtain the full logarithmic limit set.

\begin{proposition}
For
\(\di
V=
\{
(z,w)\in(\mathbb C^\ast)^2:
w=\exp (\sin\frac{1}{z})
\},
\)
the logarithmic limit set is
\[
\mathscr L^\infty(V)
=
\{(1,0)\}
\cup
\{
\frac{(-1,\lambda)}{\sqrt{1+\lambda^2}}:
\lambda\in\mathbb R
\}
\cup
\{(0,1),(0,-1)\}.
\]
Equivalently, \(\mathscr L^\infty(V)\) is the union of the single point \((1,0)\) and the closed left semicircle of \(S^1\).
\end{proposition}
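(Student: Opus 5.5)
The plan is to prove the two inclusions separately, working with the biholomorphic parametrization $\varphi(z)=(z,\exp(\sin(1/z)))$. Under $\varphi$ the amoeba is the image of $z\mapsto(\log|z|,\operatorname{Re}\sin(1/z))$, and the computation preceding the statement already gives most of what is needed.

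\emph{Inclusion $\supseteq$.} I would use the explicit sequences constructed above. The choice $s_R=R$ produces $(-1,0)$. For $\lambda\neq0$, take $s_R=a_R+ib_R$ with $\sin a_R=\operatorname{sgn}\lambda$ and $\cosh b_R=|\lambda|\log R$; since $b_R=O(\log\log R)$ we have $|s_R|\sim R$, and these sequences produce the direction $(-1,\lambda)/\sqrt{1+\lambda^2}$. The endpoints $(0,\pm1)$ then follow because $\mathscr L^\infty(V)$ is closed. Closedness holds by a diagonal argument: given directions $\xi_k\to\xi$, each realized by a sequence, pick one point from each sequence with norm $\ge k$ and normalized vector within $1/k$ of $\xi_k$. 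Finally, $(1,0)$ comes from $z\to\infty$.

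\emph{Inclusion $\subseteq$.} Take any sequence $z_j$ with $\|\Log\varphi(z_j)\|\to\infty$ and normalized vectors converging to some $\xi$. Since $\sin(1/z)$ is bounded on every compact subset of $\mathbb C^*$, the sequence $z_j$ must leave every compact set. After passing to a subsequence, either $z_j\to\infty$ or $z_j\to0$.
\begin{itemize}
\item If $z_j\to\infty$, the second coordinate tends to $0$ while the first tends to $+\infty$, so $\xi=(1,0)$.
\item If $z_j\to0$, the first coordinate $X_j=-\log|s_j|$ is negative and tends to $-\infty$. Splitting according to whether $Y_j/|X_j|$ stays bounded (giving a limit $\lambda$) or diverges to $\pm\infty$ shows that $\xi$ lies in the closed left semicircle.
\end{itemize}
Because the two ends exhaust all escaping sequences, this proves the reverse inclusion.

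The final identification is routine: the set $\{(-1,\lambda)/\sqrt{1+\lambda^2}\}\cup\{(0,\pm1)\}$ is exactly $\{\xi\in S^1:\xi_1\le0\}$, since $\lambda\mapsto(-1,\lambda)/\sqrt{1+\lambda^2}$ is a bijection onto the open left semicircle. The only delicate step is the dichotomy $z_j\to0$ or $z_j\to\infty$, which uses the boundedness of the second coordinate on compact sets. Beyond that, the main obstacle is just checking carefully that $|s_R|\sim R$ in the construction for $\lambda\neq0$, so that the ratio of the two coordinates really tends to $\lambda$.
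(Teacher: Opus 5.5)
Your proposal is correct and follows essentially the same route as the paper: the same explicit sequences $s_R=R$ and $s_R=a_R+ib_R$ with $\cosh b_R=|\lambda|\log R$ for the inclusion $\supseteq$, closedness to obtain $(0,\pm1)$, and the same split into the ends $z\to\infty$ and $z\to0$ with the bounded/unbounded ratio dichotomy for $\subseteq$. Your diagonal argument for closedness of $\mathscr L^\infty(V)$ and your explicit reduction to a subsequence with $z_j\to0$ or $z_j\to\infty$ make precise two steps the paper uses without comment.
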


\begin{proof}
The preceding argument proves that the end \(z\to\infty\) contributes exactly the direction \((1,0)\), while the end \(z\to0\) contributes exactly the closed left semicircle. These are the only possible ways for \(\Log(V)\) to be unbounded, because if \(z\) stays in a compact subset of \(\mathbb C^\ast\), then both
\(
\log|z|
\)
and
\(
\operatorname{Re}\left(\sin\frac{1}{z}\right)
\)
remain bounded. Therefore the displayed set is the complete logarithmic limit set.
\end{proof}

Thus the logarithmic limit set is not finite. In particular, it is not a finite rational spherical polyhedral complex of dimension \(0\), which would be the expected dimension for a curve in \((\mathbb C^\ast)^2\). This confirms the role of the example: it shows that a single irreducible analytic end may contain infinitely many logarithmically parallel local sheets, but the essential singularity responsible for those sheets also creates a continuum of logarithmic limiting directions.

%%%%%%%%%%%%%%%%%%%%%%%%%%%%%%%%%%%%%%%%%%%%%%%%%%%%%%%%%%%%%%%%%%%%%%

\section*{Comparison of the graphs \(w=e^z\), \(w=e^{1/z}\), and \(w=\exp(\sin(1/z))\)}

We compare the logarithmic limit sets of the three irreducible analytic hypersurfaces
\[
V_1=\{(z,w)\in(\mathbb C^\ast)^2:w=e^z\},
\]
\[
V_2=\{(z,w)\in(\mathbb C^\ast)^2:w=e^{1/z}\},
\]
and
\[
V_3=\{(z,w)\in(\mathbb C^\ast)^2:w=\exp (\sin\frac1z )\}.
\]
Each is the graph of a nowhere-vanishing holomorphic function on \(\mathbb C^\ast\), hence each is biholomorphic to \(\mathbb C^\ast\) by the projection \((z,w)\mapsto z\). Therefore each is irreducible.

\vspace{0.1cm}

For a graph
\(
w=e^{g(z)}
\)
inside \((\mathbb C^\ast)^2\), the logarithmic map is
\[
\Log(z,w)=\bigl(\log|z|,\log|w|\bigr)
=
\bigl(\log|z|,\operatorname{Re}g(z)\bigr).
\]
Thus the logarithmic limit set is obtained by studying all possible limiting directions of
\(
\bigl(\log|z|,\operatorname{Re}g(z)\bigr)
\)
as this vector becomes unbounded.
We begin with
\(
V_1=\{w=e^z\}.
\)
Here
\(
g(z)=z,
\)
so
\[
\Log(V_1)
=
\{(\log|z|,\operatorname{Re}z):z\in\mathbb C^\ast\}.
\]
Write \(z=re^{i\theta}\). Then
\(
\Log(z,e^z)=(\log r,r\cos\theta).
\)
If \(r\to0\), then \(\log r\to-\infty\) and \(r\cos\theta\to0\), so the limiting direction is
\(
(-1,0).
\)
If \(r\to+\infty\) and
\(
\dfrac{r\cos\theta}{\log r}\to\lambda\in\mathbb R,
\)
then
\[
\frac{(\log r,r\cos\theta)}
{\sqrt{(\log r)^2+(r\cos\theta)^2}}
\longrightarrow
\frac{(1,\lambda)}{\sqrt{1+\lambda^2}}.
\]
If
\(
\dfrac{r\cos\theta}{\log r}\to+\infty,
\)
the limiting direction is \((0,1)\), and if
\(
\dfrac{r\cos\theta}{\log r}\to-\infty,
\)
the limiting direction is \((0,-1)\). Conversely these are the only possibilities. Hence
\[
\mathscr L^\infty(V_1)
=
\{(-1,0)\}
\cup
\{
\frac{(1,\lambda)}{\sqrt{1+\lambda^2}}:\lambda\in\mathbb R
\}
\cup
\{(0,1),(0,-1)\}.
\]
Equivalently, \(\mathscr L^\infty(V_1)\) is the union of the point \((-1,0)\) and the closed right semicircle of \(S^1\).
We now consider
\(
V_2=\{w=e^{1/z}\}.
\)
Here
\(
g(z)=\frac1z,
\)
so
\[
\Log(V_2)
=
\{
(\log|z|,\operatorname{Re}\frac1z ):z\in\mathbb C^\ast
\}.
\]
Put
\(
s=\frac1z.
\)
Then
\(
\log|z|=-\log|s|.
\)
As \(z\to\infty\), one has \(s\to0\), so
\(
\operatorname{Re}s\to0
\)
and
\(
\log|z|\to+\infty.
\)
Therefore the end \(z\to\infty\) contributes the isolated direction
\(
(1,0).
\)
As \(z\to0\), equivalently \(s\to\infty\), we have
\[
\Log(z,e^{1/z})=(-\log|s|,\operatorname{Re}s).
\]
Writing \(s=Re^{i\theta}\), this becomes
\(
(-\log R,R\cos\theta).
\)
If
\(
\dfrac{R\cos\theta}{\log R}\to\lambda,
\)
then the limiting direction is
\(
\dfrac{(-1,\lambda)}{\sqrt{1+\lambda^2}}.
\)
If
\(
\dfrac{R\cos\theta}{\log R}\to+\infty,
\)
the limiting direction is \((0,1)\), and if
\(
\dfrac{R\cos\theta}{\log R}\to-\infty,
\)
the limiting direction is \((0,-1)\). Therefore
\[
\mathscr L^\infty(V_2)
=
\{(1,0)\}
\cup
\{
\frac{(-1,\lambda)}{\sqrt{1+\lambda^2}}:\lambda\in\mathbb R
\}
\cup
\{(0,1),(0,-1)\}.
\]
Equivalently, \(\mathscr L^\infty(V_2)\) is the union of the point \((1,0)\) and the closed left semicircle of \(S^1\).
We finally consider
\(
V_3=\{w=\exp (\sin\frac1z )\}.
\)
Here
\(
g(z)=\sin\frac1z.
\)
Again put
\(
s=\frac1z.
\)
Then
\(
\Log(V_3)
=
\left\{
\left(-\log|s|,\operatorname{Re}(\sin s)\right):s\in\mathbb C^\ast
\right\}.
\)

Writing \(s=a+ib\), one has
\(
\sin s=\sin a\cosh b+i\cos a\sinh b,
\)
and therefore
\(
\operatorname{Re}(\sin s)=\sin a\cosh b.
\)
The end \(z\to\infty\), equivalently \(s\to0\), contributes
\(
(1,0).
\)
The end \(z\to0\), equivalently \(s\to\infty\), contributes the closed left semicircle. Indeed, for any \(\lambda\in\mathbb R\), choose \(s_R=a_R+ib_R\) with \(a_R\sim R\), \(\sin a_R=\operatorname{sgn}(\lambda)\), and
\(
\cosh b_R=|\lambda|\log R
\)
when \(\lambda\neq0\). Then
\(
|s_R|\sim R
\)
and
\(
\operatorname{Re}(\sin s_R)=\lambda\log R.
\)
Thus the normalized logarithmic vector tends to
\(
\dfrac{(-1,\lambda)}{\sqrt{1+\lambda^2}}.
\)
Letting \(\lambda\to\pm\infty\) gives \((0,\pm1)\). Conversely, every sequence \(s_j\to\infty\) gives either a finite ratio
\(
\dfrac{\operatorname{Re}(\sin s_j)}{\log|s_j|}
\)
and hence a point of this left semicircle, or an infinite ratio and hence \((0,1)\) or \((0,-1)\). Therefore
\[
\mathscr L^\infty(V_3)
=
\{(1,0)\}
\cup
\{
\frac{(-1,\lambda)}{\sqrt{1+\lambda^2}}:\lambda\in\mathbb R
\}
\cup
\{(0,1),(0,-1)\}.
\]
Thus \(V_2\) and \(V_3\) have the same logarithmic limit set, even though their analytic behavior near \(z=0\) is different.

\begin{proposition}
The logarithmic limit sets are
\[
\mathscr L^\infty(\{w=e^z\})
=
\{(-1,0)\}
\cup
\{
\frac{(1,\lambda)}{\sqrt{1+\lambda^2}}:\lambda\in\mathbb R
\}
\cup
\{(0,1),(0,-1)\},
\]
\[
\mathscr L^\infty(\{w=e^{1/z}\})
=
\{(1,0)\}
\cup
\{
\frac{(-1,\lambda)}{\sqrt{1+\lambda^2}}:\lambda\in\mathbb R
\}
\cup
\{(0,1),(0,-1)\},
\]
and
\[
\mathscr L^\infty (\{w=\exp (\sin\frac1z )\})
=
\{(1,0)\}
\cup
\{
\frac{(-1,\lambda)}{\sqrt{1+\lambda^2}}:\lambda\in\mathbb R
\}
\cup
\{(0,1),(0,-1)\}.
\]
Hence \(w=e^z\) gives the closed right semicircle together with the opposite isolated direction, while \(w=e^{1/z}\) and \(w=\exp(\sin(1/z))\) give the closed left semicircle together with the opposite isolated direction.
\end{proposition}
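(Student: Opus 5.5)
Each of the three hypersurfaces is the graph $w=e^{g(z)}$ of a nowhere-vanishing holomorphic function on $\mathbb C^\ast$, so we parametrize it by $z\in\mathbb C^\ast$. Then $\Log(V)$ is the image of $z\mapsto(\log|z|,\operatorname{Re}g(z))$, and we compute the limiting directions of this planar vector field as it becomes unbounded. Since $\operatorname{Re}g$ is continuous on $\mathbb C^\ast$, the vector stays bounded on compact subsets of $\mathbb C^\ast$. Hence only the two ends $z\to0$ and $z\to\infty$ contribute, and we treat each end separately for each $g\in\{z,\,1/z,\,\sin(1/z)\}$. The case $g=\sin(1/z)$ is exactly the preceding proposition, which we quote. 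The case $g=1/z$ is obtained from $g=z$ by the substitution $z\mapsto1/z$, which sends $(x,y)\mapsto(-x,y)$ in logarithmic coordinates. So the real work is the single computation for $V_1=\{w=e^z\}$.

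For $V_1$ we write $z=re^{i\theta}$, so that $\Log=(\log r,r\cos\theta)$.

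\emph{End $r\to0$.} The second coordinate tends to $0$, so this end contributes only $(-1,0)$.

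\emph{End $r\to\infty$, upper bound.} Here $\log r\to+\infty$, so every limiting direction has nonnegative first coordinate. Any sequence has, after passing to a subsequence, $r\cos\theta/\log r\to\lambda\in[-\infty,+\infty]$. A finite $\lambda$ gives $(1,\lambda)/\sqrt{1+\lambda^2}$, and $\lambda=\pm\infty$ gives $(0,\pm1)$. So the directions from this end lie in the closed right semicircle.

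\emph{End $r\to\infty$, realizability.} For each finite $\lambda$ we take $\cos\theta_r=\lambda\log r/r$, which is admissible for large $r$ since it tends to $0$. This realizes $(1,\lambda)/\sqrt{1+\lambda^2}$. The points $(0,\pm1)$ follow either from closedness of $\mathscr L^\infty$ or directly from $\theta=0,\pi$, where the second coordinate is $\pm r\gg\log r$.

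Putting the two ends together gives the stated formula for $V_1$. Transporting it by $z\mapsto1/z$, equivalently by the reflection $(x,y)\mapsto(-x,y)$, gives the formula for $V_2$. Here $z\to\infty$ now contributes the isolated point $(1,0)$, because $\operatorname{Re}(1/z)\to0$. For $V_3$ we cite the previous proposition, and the final sentence is then a comparison of the sets already obtained.

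The only delicate point is the surjectivity onto the semicircle. It requires parametrizing $\theta$ as a function of $r$ so that $r\cos\theta$ grows like $\lambda\log r$, and this is straightforward here. No genuine obstacle is expected beyond keeping the orientation of the reflection straight.
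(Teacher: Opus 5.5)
Your proposal is correct and follows essentially the paper's route: the same polar-coordinate computation $\Log=(\log r,\,r\cos\theta)$ for $w=e^z$, with the case analysis on the limit of $r\cos\theta/\log r$, and the earlier computation for $w=\exp(\sin(1/z))$. The only differences are minor. You obtain $w=e^{1/z}$ from $w=e^z$ through the reflection $(x,y)\mapsto(-x,y)$ induced by $z\mapsto 1/z$, rather than redoing the computation in the variable $s=1/z$. You also make the realizability of every $\lambda$ explicit via $\cos\theta_r=\lambda\log r/r$, which the paper leaves implicit.
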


\begin{proof}
The proof is exactly the computation above. For \(w=e^z\), the unbounded term \(\operatorname{Re}z\) occurs at the end \(z\to\infty\), while the end \(z\to0\) contributes only \((-1,0)\). For \(w=e^{1/z}\), the unbounded term \(\operatorname{Re}(1/z)\) occurs at \(z\to0\), while the end \(z\to\infty\) contributes only \((1,0)\). For \(w=\exp(\sin(1/z))\), the term \(\operatorname{Re}(\sin(1/z))\) also becomes unbounded at \(z\to0\), and its oscillation is large enough to realize the whole closed left semicircle. No other limiting directions are possible, because if \(z\) stays in a compact subset of \(\mathbb C^\ast\), both logarithmic coordinates remain bounded.
\end{proof}

The comparison reveals two important points. 
The hypersurface \(w=e^{1/z}\) has an essential singularity at \(z=0\), but its logarithmic limit set near \(z=0\) is the closed left semicircle, not merely a finite set of directions. Therefore the essential singularity already produces a continuum of logarithmic directions.
The hypersurface \(w=\exp(\sin(1/z))\) also has an essential singularity at \(z=0\). It has infinitely many local sheets over generic \(w\)-values inside the single topological end \(z\to0\). These infinitely many sheets are logarithmically parallel along suitable transverse levels, but the full logarithmic limit set again contains a continuum of directions. Thus infinite sheet behavior is accompanied by a large logarithmic limit set.
The hypersurface \(w=e^z\) is different in location but not in principle. Its exponential growth occurs at \(z\to\infty\), and this creates the closed right semicircle. The other end \(z\to0\) contributes only the isolated direction \((-1,0)\).

%%%%%%%%%%%%%%%%%%%%%%%%%%%%%%%%%%%%%%%%%%%%%%%%%%%%%%%%%%%%%%%%%%%%%%%%
%%%%%%%%%%%%%%%%%%%%%%%%%%%%%%%%%%%%%%%%%%%%%%%%%%%%%%%%%%%%%%%%%%%%%%%%

\end{document}